\newtheorem{theorem}{Theorem}[section]
\newtheorem{lemma}{Lemma}[section]
\newtheorem{proposition}{Proposition}[section]
\newtheorem{remark}{Remark}[section]
\newtheorem{corollary}{Corollary}[section]
\newtheorem{definition}{Definition}[section]
\def\Z{\mathbb{Z}}
\def\R{\mathbb{R}}
\def\C{\mathbb{C}}
\def\H{\mathbb{H}}
\def\N{\mathbb{N}}
\def\F{\mathcal{F}}
\def\L{\mathrm{L}}
\def\0{{\bf{0}}}
\def\vx{{\vec{\xi}}}
\def\vt{{\vec{t}}}
\def\vs{{\vec{s}}}
\def\vr{{\vec{r}}}
\def\-{{\mbox{\tiny $ - $ }}}
\def\+{{\mbox{\tiny $ + $ }}}
\def\0{{\mbox{\tiny $ (0) $ }}}
\def\1{{\mbox{\tiny $ (1) $ }}}
\def\2{{\mbox{\tiny $ (2) $ }}}
\newcommand{\dotcup}{\ensuremath{\mathaccent\cdot\cup}}
\numberwithin{equation}{section}
\newcommand\blfootnote[1]{
  \begingroup
  \renewcommand\thefootnote{}\footnote{#1}%
  \addtocounter{footnote}{-1}%
  \endgroup
}
\begin{document}

\allowdisplaybreaks

   \begin{center}
      \Large\textbf{Equations For Frame Wavelets In $L^2(\R^2)$}
   \end{center}

\vspace{.5cm}

\begin{center}
      \large\textit{Xingde Dai}

      \vspace{.5cm}

      \textit{Dedicated to Zurui Guo }

\end{center}

\bigskip

\begin{center}Abstract\end{center}

\begin{quotation}

{\small We establish system of equations   for single function normalized tight frame wavelets
with compact supports
associated with $2\times 2$ expansive integral matrices in $L^2(\R^2)$.}
\end{quotation}

      \address{The University of North Carolina at Charlotte}
      \email{xdai@uncc.edu}

\blfootnote{2010 Mathematics Subject  Classification. Primary 46N99, 47N99, 46E99; Secondary 42C40, 65T60.}

\section{Introduction}

\bigskip

In this paper $\R^2$ will be the two dimensional  Euclidean space and
and $\C^2$  will be the two dimensional complex  Euclidean space.
We will use notations $\vt,\vec{s},\vr,\vx,\vec{\eta}$ for vectors in $\R^2$ or
$\C^2.$
We will use notation $\vt_1\circ \vt_2$ for the standard inner product of two vectors $\vt_1$ and $\vt_2.$
For a vector $\vx=\left(\begin{array}{c} \xi_1 \\  \xi_2  \end{array}\right)$ in $\C^2,$
its real part is
$\mathfrak{Re}(\vx)\equiv \left(\begin{array}{c} \mathfrak{Re}(\xi_1) \\  \mathfrak{Re}(\xi_2)  \end{array}\right)$
and its imaginary part is
$\mathfrak{Im}(\vx)\equiv \left(\begin{array}{c} \mathfrak{Im}(\xi_1) \\  \mathfrak{Im}(\xi_2)  \end{array}\right)$.
The measure $\mu$ will be the Lebesgue measure on $\R^2$  
and $L^2(\R^2)$ will be the Hilbert space of all square integrable functions on $\R^2$.
A (countable) set of elements $\{\psi_i: i\in \Lambda\}$ in $L^2(\R^2)$ is called a \emph{normalized tight frame} of $L^2(\R^2)$ if
\begin{equation}\label{frame}
   \sum_{i\in\Lambda} |\langle f, \psi_i \rangle |^2 = \|f\|^2,
\forall f\in L^2(\R^2).
\end{equation}
It is well know in the literature \cite{christensen} that the equation \eqref{frame} is equivalent to
\begin{equation}\label{frameeq}
   f = \sum_{i\in\Lambda} \langle f, \psi_i \rangle \psi_i ,
\forall f\in \H.
\end{equation}
Let $\Z^2$ be the integer lattice in $\R^2.$ For a  vector $\vec{\ell}\in\Z^2,$
the \textit{translation operator} $T_{\vec{\ell}}$ is defined as
\begin{eqnarray*}
    (T_{\vec{\ell}} f)(\vec{t}) &\equiv& f(\vec{t}-\vec{\ell}), ~ \forall f\in L^2(\R^2).\\
\end{eqnarray*}
A square matrix is called \textit{expansive} if all of its eigenvalues have absolute values greater than 1.
Let $A$ be a $2 \times 2$ expansive integral matrix with eighenvalues $\{\lambda_1,\lambda_2\}$. The norm of the linear transformation $A$ on $\R^2$ (or $\C^2$) will be $\|A\|\equiv \max\{|\beta_1|,|\beta_2|\}.$
For two
vectors $\vec{t}_1,\vec{t}_2$ in the Euclidean space $\R^2,$  we have $\vec{t}_1\circ A \vec{t}_2 = A^\tau \vec{t}_1 \circ \vec{t}_2,$ where $A^\tau$ is the transpose matrix of $A.$
We define operator $U_{A}$ as
\begin{eqnarray*}
    (U_A f)(\vec{t}) &\equiv& (\sqrt{|\det(A)|}) f(A\vec{t}),~\forall f \in L^2(\R^2).
\end{eqnarray*}
This is a unitary operator on $\H.$ In particular, for an expansive matrix $A$ with $|\det(A)|=2,$ we will use
$D_A$ to denote $ U_A$ and call it \textit{dilation operator}.

\begin{definition}\label{defpsi0}
Let $A$ be an expansive integral matrix  with $|\det(A)|=2.$
A function $\psi\in\H$ is called a normalized tight frame wavelet, or NTFW, associated with $A$, if the set
\begin{equation}
    \{ D_A ^n T_{\vec{\ell}}\psi,  n\in\Z,\vec{\ell}\in\Z^2\}
\end{equation}
constitutes a normalized tight frame of $\H.$
\end{definition}
\begin{remark}
The function $\psi$ is called single function NTFW since the frame set (\ref{defpsi0})
is generated by one function $\psi.$
An NTFW is not necessarily a unit vector in $\H$ unless it is an orthonormal wavelet. By definition an element $\psi\in \H$ is an NTFW iff
\begin{equation}\label{framew}
    \|f\|^2 = \sum_{n\in\Z , \vec{\ell} \in \Z^2} |\langle f,D_A ^n T_{\vec{\ell}}\psi \rangle |^2, \forall f\in L^2((\R^2).
\end{equation}
By \eqref{frameeq} this is equivalent to
\begin{equation}\label{frameweq}
    f(\vec{t}) = \sum_{n\in\Z , \vec{\ell} \in \Z^2} \langle f,D_A ^n T_{\vec{\ell}}\psi \rangle D_A ^n T_{\vec{\ell}}\psi (\vec{t}), \forall f\in L^2(\R^2), \  \vec{t}\in\R^2 \ a.e.
\end{equation}
\end{remark}

The literatures of wavelet theory in higher dimensions are rich. Many authors provide significant contributions to the theory.
It is hard to make a short list. However, the author must cite the following names and their papers,

Q. Gu and D. Han \cite{guhan} proved that, if an integral expansive matrix associates with a single function \textit{orthogonal wavelets} with \textit{multi-resolution analysis} (MRA), then the absolute value of the matrix determinant must be $2.$ These orthogonal wavelets are special single function normalized tight frame wavelets.
In this paper we will construct single function normalized tight frame wavelets  with compact support associated with expansive integral matrices with determinant $\pm2$ in $L^2(\R^2)$.

The existance of Haar type orthonormal wavelets (hence with compact support) in $L^2(\R^2)$ was proved by J. Lagarias and
Y. Wang in \cite{wangyang2}.
The first examples of such functions with compact support and with properties of
high smoothness in  $L^2(\R^2)$ were provided by E. Belogay and Y. Wang
in  \cite{wangyang}.
The goal of this paper is to prove that every solution to the system of equations \eqref{lawtoneq} will generate filters for normalized tight frame wavelets. In applications, we just need to solve the systems of equations for filters which is feasible by exiting computer programs. Compare with \cite{wangyang2} our methods appear to be more constructive. Also, it provides variety for single function Parseval wavelets which includes the orthogonal wavelets. Also, the methods in this paper provide a wide base in searching for more frame wavelets with normal properties as wavelets in \cite{wangyang}.

We will follow the classical method for constructing such frame wavelets as provided by I. Daubechies in \cite{dau}. That is, from the filter function $m_0$ to the scaling function $\varphi$ and then to the
wavelet function $\psi$. To construct the filter function $m_0$ we start with the system of equations (\ref{lawtoneq}). The system of equations (\ref{lawtoneq}) is a generalization of
W. Lawton's system of equations \cite{lawton} for frame wavelets in $L^2(\R)$.

The scaling function $\varphi$ in this paper is not necessarily orthogonal. So the wavelet system constructed fits the definition of the
\textit{frame multi-resolution analysis} (FMRA) by   J. Benedetto and S. Li in \cite{benedetto} and  it also fits the definition of the \textit{general multi-resolution analysis} (GMRA) by
L. Baggett, H. Medina and K.  Merrill \cite{baggett}.

%

\section{Reduction Theorems}
\bigskip

In \cite{wangyang} they also prove that
every expansive $2 \times 2$ integral matrix with $|\det(A)|=2$ can be expressed in the form
 $SBS^{-1}$ where $S$ is a $2\times 2$ integral matrix with $|\det (S)|=1$ and $B$ is one of the six matrices listed below,

\begin{equation}\label{six}
\left[\begin{array}{cc} 0 & 2 \\  1 & 0 \end{array}\right],
 \left[\begin{array}{cc} 0 & 2 \\ -1 & 0 \end{array}\right],
  \left[\begin{array}{cc} 1 & 1 \\ -1 & 1 \end{array}\right],
   \left[\begin{array}{cc} -1 & -1 \\  1 & -1 \end{array}\right],
    \left[\begin{array}{cc}  0 &  2 \\ -1 &  1 \end{array}\right],
     \left[\begin{array}{cc}  0 & -2 \\  1 & -1 \end{array}\right].
\end{equation}

\begin{proposition}\label{linktonew6}
Let $A$ be a $2\times 2$ expansive integral matrix with $|\det (A)| = 2.$
Then there is a $2\times 2$ integral matrix $S$ with $|\det (S)| =1$ such that
$S A S^{-1}$ is one of the following six matrices,
\begin{equation}\label{newsix}
\left[\begin{array}{cc} 1 & 1 \\  1 & -1 \end{array}\right],
 \left[\begin{array}{cc} 1 & -3 \\ 1 & -1 \end{array}\right],
  \left[\begin{array}{cc} 1 & 1 \\ -1 & 1 \end{array}\right],
   \left[\begin{array}{cc} -1 & -1 \\  1 & -1 \end{array}\right],
    \left[\begin{array}{cc}  -1 &  2 \\ -2 &  2 \end{array}\right],
     \left[\begin{array}{cc}  1 & -2 \\  2 & -2 \end{array}\right]
\end{equation}
\end{proposition}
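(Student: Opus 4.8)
The plan is to reduce the statement to the classification quoted from \cite{wangyang}: every expansive $2\times2$ integral $A$ with $|\det(A)|=2$ can be written as $A = S_0 B S_0^{-1}$ with $S_0$ integral, $|\det(S_0)|=1$, and $B$ one of the six matrices in \eqref{six}. Conjugation by integral matrices of determinant $\pm1$ is an equivalence relation, so it suffices to conjugate each matrix of \eqref{six} into the corresponding matrix of \eqref{newsix}; composing that conjugator with $S_0$ then produces the $S$ required for $A$ itself.

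First I would fix the correspondence through the conjugation invariants, the trace and the determinant. A direct computation shows that the matrices of \eqref{six}, read in order, have characteristic polynomials $\lambda^2-2,\ \lambda^2+2,\ \lambda^2-2\lambda+2,\ \lambda^2+2\lambda+2,\ \lambda^2-\lambda+2,\ \lambda^2+\lambda+2$, and that the matrices of \eqref{newsix} reproduce exactly this list in the same order. In particular the third and fourth matrices of \eqref{six} are literally equal to the third and fourth matrices of \eqref{newsix}, so those two entries require no conjugation at all.

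For each of the remaining four entries I would exhibit the conjugator explicitly. Writing $S=\left[\begin{smallmatrix} a & b \\ c & d \end{smallmatrix}\right]$ and imposing the intertwining relation $SB=CS$ (with $B$ from \eqref{six} and $C$ the matched matrix of \eqref{newsix}) turns into a linear system whose integral solutions form a two-parameter family; within each family one selects parameters that force $|\det(S)|=1$. For instance $S=\left[\begin{smallmatrix}1&1\\0&1\end{smallmatrix}\right]$ carries the first matrix of \eqref{six} to the first of \eqref{newsix}, and similarly small integral matrices (such as the involution $\left[\begin{smallmatrix}1&-1\\0&-1\end{smallmatrix}\right]$ for the second entry) handle the fifth and sixth. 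Confirming $SBS^{-1}=C$ is then a one-line matrix multiplication in each case.

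The one point deserving genuine care is that the conjugation is achievable over $\Z$ with $|\det(S)|=1$, not merely over $\mathbb{Q}$ or $\mathbb{R}$. Since all six characteristic polynomials are irreducible over $\mathbb{Q}$, rational conjugacy of any matched pair is automatic; integral conjugacy, however, is controlled by the ideal class structure of the associated quadratic orders and is not guaranteed a priori. Producing the explicit determinant-$\pm1$ intertwiners sidesteps this subtlety entirely and collapses the whole proposition to a finite, mechanically verifiable check.
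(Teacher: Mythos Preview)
Your proposal is correct and follows essentially the same approach as the paper: reduce to the Belogay--Wang classification \eqref{six} and then exhibit, for each of the four entries that differ, an explicit integral conjugator of determinant $\pm1$ (the third and fourth entries coincide, as you note). The paper simply lists four concrete conjugating matrices and checks the products; your conjugator $\left[\begin{smallmatrix}1&1\\0&1\end{smallmatrix}\right]$ for the first entry is in fact simpler than the paper's choice, and your second coincides with the paper's, so the only remaining work is writing down and verifying the fifth and sixth conjugators as you describe.
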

\begin{proof}
This statement is an immediate consequence of the list (\ref{six}) by
E. Belogay and Y. Wang \cite{wangyang} and
the following calculation.
\begin{eqnarray*}
\left[\begin{array}{cc} -1 & 1 \\  2 & -3 \end{array}\right]
\left[\begin{array}{cc} 0 & 2 \\  1 & 0 \end{array}\right]
\left[\begin{array}{cc} -1 & 1 \\  2 & -3 \end{array}\right]^{-1}&=&
\left[\begin{array}{cc} 1 & 1 \\  1 & -1 \end{array}\right],\\
 \left[\begin{array}{cc} 1 & -1 \\ 0 & -1 \end{array}\right]
\left[\begin{array}{cc} 0 & 2 \\  -1 & 0 \end{array}\right]
 \left[\begin{array}{cc} 1 & -1 \\ 0 & -1 \end{array}\right]^{-1}&=&
 \left[\begin{array}{cc} 1 & -3 \\ 1 & -1 \end{array}\right],\\
  \left[\begin{array}{cc}  -1 &  1 \\ -1 &  0 \end{array}\right]
\left[\begin{array}{cc}  0 &  2 \\ -1 &  1 \end{array}\right]
  \left[\begin{array}{cc}  -1 &  1 \\ -1 &  0 \end{array}\right]^{-1}&=&
    \left[\begin{array}{cc}  -1 &  2 \\ -2 &  2 \end{array}\right],\\
 \left[\begin{array}{cc}  -1 & 1 \\  -1 & 0 \end{array}\right]
\left[\begin{array}{cc}  0 & -2 \\  1 & -1 \end{array}\right]
 \left[\begin{array}{cc}  -1 & 1 \\  -1 & 0 \end{array}\right]^{-1}&=&
\left[\begin{array}{cc}  1 & -2 \\  2 & -2 \end{array}\right].
\end{eqnarray*}

\end{proof}

\begin{lemma}\label{1}
Let $A$ be a $2\times 2$ expansive integral matrix with $|\det(A)| =2$.
For a $2\times 2$ integral matrix $S$ of $|\det(S)|=1,$ assume $B=S^{-1}AS$. Then
\begin{eqnarray}
\label{P} U_S T_{\vec{\ell}}U_S^{-1} &=& T_{S^{-1}\vec{\ell}}, \forall \vec{\ell}\in\Z^2;\\
\label{Q} U_S D_A ^n U_S^{-1} &=& D_B ^n, \forall n\in\Z.
\end{eqnarray}
\end{lemma}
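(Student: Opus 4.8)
The plan is to reduce both identities to a single composition rule for the operators $U_X$ and then apply it. First I would record the basic algebra: a direct substitution shows that for any invertible matrices $X,Y$ one has $(U_X U_Y f)(\vec{t}) = \sqrt{|\det X||\det Y|}\, f(YX\vec{t}) = (U_{YX}f)(\vec{t})$, i.e. $U_X U_Y = U_{YX}$, so in particular $U_S^{-1} = U_{S^{-1}}$ (taking $Y=S^{-1}$). Here it is important that $|\det S|=1$ forces $S^{-1}$ to be an integral matrix, so that $U_{S^{-1}}$ again preserves $\Z^2$ and the vector $S^{-1}\vec{\ell}$ on the right-hand side of \eqref{P} genuinely lies in $\Z^2$; moreover the normalizing factor $\sqrt{|\det S|}$ equals $1$, so $U_S$ acts simply as $(U_S f)(\vec{t}) = f(S\vec{t})$.

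For \eqref{P} I would argue by plain change of variables rather than through $U_X U_Y = U_{YX}$, since $T_{\vec{\ell}}$ is not one of the operators $U_X$. Setting $g = U_S^{-1}f$, so that $g(\vec{t}) = f(S^{-1}\vec{t})$, I would compute $(T_{\vec{\ell}}g)(\vec{t}) = f(S^{-1}(\vec{t}-\vec{\ell}))$ and then $(U_S T_{\vec{\ell}} U_S^{-1} f)(\vec{t}) = (T_{\vec{\ell}}g)(S\vec{t}) = f\big(S^{-1}(S\vec{t}-\vec{\ell})\big) = f(\vec{t} - S^{-1}\vec{\ell}) = (T_{S^{-1}\vec{\ell}} f)(\vec{t})$, which is exactly \eqref{P}.

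For \eqref{Q} I would first dispatch the case $n=1$ purely algebraically. Using $U_S^{-1}=U_{S^{-1}}$ and applying the composition rule twice gives $U_S D_A U_S^{-1} = U_S U_A U_{S^{-1}} = U_{AS} U_{S^{-1}} = U_{S^{-1}AS} = U_B$, and since $|\det B| = |\det A| = 2$ we have $U_B = D_B$, as required. The general integer $n$ then follows from the standard conjugation-of-powers identity: inserting $U_S^{-1}U_S = I$ between consecutive factors yields $U_S D_A^n U_S^{-1} = (U_S D_A U_S^{-1})^n = D_B^n$ for every $n\in\Z$, the inverses handling the negative values of $n$.

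I expect no serious obstacle here; the only point demanding care is the bookkeeping of the multiplication order in the anti-homomorphism $U_X U_Y = U_{YX}$ (it is easy to slip and write $SA$ where $AS$ is meant, or to conjugate in the wrong direction), together with the observation that integrality of $S^{-1}$, guaranteed by $|\det S|=1$, is what makes the statement well-posed in the first place.
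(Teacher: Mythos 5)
Your proof is correct and follows essentially the same route as the paper's: the same pointwise change of variables for \eqref{P}, the same identification $U_S^{-1}=U_{S^{-1}}$, and the same reduction of \eqref{Q} to the case $n=1$ followed by conjugation of powers. If anything, your bookkeeping via the rule $U_X U_Y = U_{YX}$ is more careful than the paper's own display, which misprints the $n=1$ computation as $\sqrt{2}\,h(SAS^{-1}\vec{t})$ where $\sqrt{2}\,h(S^{-1}AS\vec{t})$ is meant (consistently with $B=S^{-1}AS$).
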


\begin{proof}
Let $h\in L^2 (\R^2).$ By definition, $U_S U_{S^{-1}} h (\vec{t})=U_S h(S^{-1}\vec{t})=h(SS^{-1}\vec{t}) =h(\vec{t})$, so $U_S U_{S^{-1}}=I.$
Similarly, we have $U_{S^{-1}}U_S =I.$ Therefore, $U_S ^{-1}=U_{S^{-1}}.$ For $\vec{\ell}\in\Z^d,$ we have
\begin{eqnarray*}
 U_S T_{\vec{\ell}} U_S ^{-1} h(\vec{t})
&=&  U_S T_{\vec{\ell}} U_{S ^{-1}} h(\vec{t}) \\
    &=& U_S T_{\vec{\ell}} h( S^{-1} \vec{t})\\
        &=& U_S  h( S^{-1} (\vec{t}-\vec{\ell}))\\
            &=& U_S  h( S^{-1} \vec{t}-S^{-1}\vec{\ell})\\
                &=&   h( S^{-1}S \vec{t}-S^{-1}\vec{\ell})\\
                    &=&   h( \vec{t}-S^{-1}\vec{\ell})\\
                        &=&   T_{S^{-1}\vec{\ell}} h( \vec{t}).
\end{eqnarray*}
So we have equation (\ref{P}).
Also we have
$U_S D_A U_S^{-1} h (\vec{t}) = \sqrt{2} h(SAS^{-1}\vec{t})=D_B h(\vec{t}).$
So,
\begin{eqnarray*}
  U_S D_A U_S^{-1} &=& D_B \\
  U_S D_A ^{-1} U_S^{-1} &=& (U_S D_A U_S^{-1})^{-1}=D_B^{-1}.
\end{eqnarray*}
This implies that $\forall n\in\N$,
\begin{eqnarray*}
U_S D_A ^n U_S^{-1}&=&(U_S D_A U_S^{-1})^n=D_B ^n;\\
U_S D_A ^{-n} U_S^{-1}&=&(U_S D_A U_S^{-1})^{-n}=D_B ^{-n}.
\end{eqnarray*}
This proves equation (\ref{Q}).

\end{proof}

\begin{theorem}\label{redsym}
Let $A$ be a $2\times 2$ expansive integral matrix with $|\det(A)| =2$ and $S$ be a $2\times 2$ integral matrix with property  that $|\det(S)|=1$. Let $B\equiv S^{-1}AS$.  Assume that
a function $\psi_A$ is a normalized tight frame wavelet associated with the matrix $A.$ Then the function $\eta_B \equiv U_S \psi_A$ is a normalized tight frame wavelet associated with the matrix $B.$
\end{theorem}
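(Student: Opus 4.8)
The plan is to exploit two facts: a unitary operator carries a normalized tight frame to a normalized tight frame, and Lemma~\ref{1} provides intertwining relations that let me rewrite the candidate $B$-system generated by $\eta_B$ as the $U_S$-image of the $A$-system generated by $\psi_A$. Concretely, I would verify the characterization \eqref{framew} for $\eta_B$ and $B$, i.e. show that $\|f\|^2 = \sum_{n\in\Z,\,\vec{m}\in\Z^2} |\langle f, D_B^n T_{\vec{m}}\eta_B\rangle|^2$ for every $f\in\H$, and then invoke the equivalence of \eqref{framew} with the frame condition.

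The first step is to compute each affine-system element for $\eta_B = U_S\psi_A$. Using $D_B^n = U_S D_A^n U_S^{-1}$ from \eqref{Q}, and rewriting the translation relation \eqref{P} as $T_{\vec{m}} = U_S T_{S\vec{m}} U_S^{-1}$ (set $\vec{\ell}=S\vec{m}$, so that $S^{-1}\vec{\ell}=\vec{m}$), the operators telescope:
\[
D_B^n T_{\vec{m}}\eta_B
= \bigl(U_S D_A^n U_S^{-1}\bigr)\bigl(U_S T_{S\vec{m}} U_S^{-1}\bigr)\bigl(U_S\psi_A\bigr)
= U_S\,D_A^n T_{S\vec{m}}\psi_A .
\]
Since $U_S$ is unitary, I may move it across each inner product, obtaining $\langle f, D_B^n T_{\vec{m}}\eta_B\rangle = \langle U_S^{-1}f,\, D_A^n T_{S\vec{m}}\psi_A\rangle$ for all $n$ and $\vec{m}$.

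The remaining step is the reindexing of the lattice sum. Because $S$ is integral with $|\det(S)|=1$, its inverse $S^{-1}$ is again integral, so $S$ is a bijection of $\Z^2$ onto itself; hence the substitution $\vec{\ell}=S\vec{m}$ reparametrizes the sum over $\vec{m}\in\Z^2$ as a sum over $\vec{\ell}\in\Z^2$ with no omission or repetition. Combining these observations yields
\[
\sum_{n,\vec{m}} \bigl|\langle f, D_B^n T_{\vec{m}}\eta_B\rangle\bigr|^2
= \sum_{n,\vec{\ell}} \bigl|\langle U_S^{-1}f,\, D_A^n T_{\vec{\ell}}\psi_A\rangle\bigr|^2
= \|U_S^{-1}f\|^2 = \|f\|^2 ,
\]
where the middle equality is \eqref{framew} applied to the NTFW $\psi_A$ for $A$, and the last uses that $U_S^{-1}$ is an isometry. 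This is exactly \eqref{framew} for $\eta_B$ and $B$, so $\eta_B$ is a normalized tight frame wavelet associated with $B$.

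I do not anticipate a genuine obstacle, since all the analytic content is already packaged in Lemma~\ref{1} and the unitarity of $U_S$. The only point requiring care is the bijectivity of $S$ on the integer lattice $\Z^2$, and this is precisely where the hypothesis $|\det(S)|=1$ enters: it forces $S^{-1}$ to be integral, without which the reindexing $\vec{\ell}=S\vec{m}$ would fail and the $B$-system would not be correctly generated over $\Z^2$ by the single function $\eta_B$.
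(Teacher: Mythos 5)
Your proof is correct and follows essentially the same route as the paper: both arguments rest on the intertwining relations of Lemma~\ref{1}, the unitarity of $U_S$, and the fact that $S$ maps $\Z^2$ bijectively onto itself. The only cosmetic difference is that you verify the sum-of-squares condition \eqref{framew} directly (working with the definition of a normalized tight frame), whereas the paper verifies the equivalent reconstruction formula \eqref{frameeq} by expanding $U_S^{-1}f$ in the $A$-system and pushing forward; the substance is identical.
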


\begin{proof}
By assumption and Lemma \ref{1} we have   $B=S^{-1}AS, D_B=U_B=U_{S^{-1}AS} = U_{S^{-1}} U_A U_{S}= U_S ^{-1}D_A U_S$.
Let $f\in L^2 (\R^2).$ We have

\begin{eqnarray*}
  U_S ^{-1}  f &=& \sum _{n\in \Z, \vec{\ell}\in\Z^2}
  \langle   U_S ^{-1} f, D_A ^n  T_{\vec{\ell}} \psi_A \rangle D_A ^n  T_{\vec{\ell}} \psi_A.
  \end{eqnarray*}
Since $U_S$ is a unitary operator, we have

\begin{eqnarray*}
 f  &=& \sum _{n\in \Z, \vec{\ell}\in\Z^2}
 \langle   f, U_S D_A ^n  T_{\vec{\ell}} \psi_A \rangle
 U_S D_A ^n  T_{\vec{\ell}}\psi_A\\
 &=& \sum _{n\in \Z, \vec{\ell}\in\Z^2}
 \langle   f, U_S D_A ^n U_S ^{-1} U_S  T_{\vec{\ell}} U_S ^{-1} U_S \psi_A \rangle
 U_S D_A ^n U_S ^{-1} U_S  T_{\vec{\ell}} U_S ^{-1} U_S \psi_A\\
&=& \sum _{n\in \Z, \vec{\ell}\in\Z^2}
 \langle   f, D_B ^n  T_{S^{-1}\vec{\ell}} \eta_B \rangle
 D_B ^n  T_{S^{-1}\vec{\ell}}\eta_B\\
 &=& \sum _{n\in \Z, \vec{\ell}\in S^{-1}\Z^2}
 \langle   f, D_B ^n  T_{\vec{\ell}} \eta_B \rangle
 D_B ^n  T_{\vec{\ell}}\eta_B
 \end{eqnarray*}
Since $S$ is an integral matrix with  $|\det (S)|=1,$ we have $\Z^2 = S \Z^2 = S^{-1} \Z^2.$
So we have
\begin{eqnarray*}
f &=& \sum _{n\in \Z, \vec{\ell}\in\Z^2}
 \langle   f, D_B ^n  T_{\vec{\ell}} \eta_B \rangle
 D_B ^n  T_{\vec{\ell}}\eta_B
\end{eqnarray*}

\end{proof}

For $f,g\in L^1(\R^2) \cap L^2(\R^2),$ the Fourier Transform and Fourier Inverse Transform are defined as
\begin{eqnarray*}
(\F f)(\vec{s}) &=& \frac{1}{2\pi} \int_{\R^2}e^{-i\vec{s}\circ\vec{t}}f(\vec{t})d\vt=\hat{f}(\vec{s}),\\
(\F^{-1} g)(\vec{t})&=& \frac{1}{2\pi}\int_{\R^2}e^{i\vec{s}\circ\vec{t}}g(\vec{s})d\vec{s}=\check{g}(\vec{t}).\\
\end{eqnarray*}
The set $L^1(\R^2) \cap L^2(\R^2)$ is dense in $\H,$ the operator $\F$ extends to a unitary operator on $\H$ which is still called Fourier Transform.
For an operator $V$ on $\H,$ we will write $\F V \F^{-1} \equiv \widehat{V}.$
We will use the following formulas in this paper.
\begin{lemma}\label{k}
Let $A$ be a $2\times 2$ expansive integral matrix, then
\begin{eqnarray*}
  T_{\vec{\ell}} D_A &=& D_AT_{A\vec{\ell}},\\
  \widehat{T}_{\vec{\ell}} &=& M_{e^{- i \vec{s}\circ\vec{\ell}}},\\
  \widehat{D}_A &=& U_{{(A^{-1})}^\tau} = U_{{(A^\tau)}^{-1}}=D_{A^\tau}^{-1}=D_{A^\tau}^*,
\end{eqnarray*}
where $M_{e^{- i \vec{s}\circ\vec{\ell}}}$ is the multiplication operator by $e^{- i \vec{s}\circ\vec{\ell}}$.
Operators $T_{\vec{\ell}},D_A ,\F$ and $M_{e^{- i \vec{s}\circ\vec{\ell}}}$ are unitary operators acting on $\H$.\\
\end{lemma}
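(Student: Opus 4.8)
The plan is to read each displayed line as an identity of operators on $L^2(\R^2)$ and to verify it by applying both sides to an arbitrary $f$ and comparing the resulting functions pointwise almost everywhere. For the two identities involving $\F$ it suffices to work on the dense subspace $L^1(\R^2)\cap L^2(\R^2)$, where $\F$ is given by the explicit integral formula, and then to extend by continuity, since every operator appearing is bounded (indeed unitary). No passage to adjoints or spectral theory is needed; everything reduces to a change of variables in an integral together with the adjoint relation $\vec{t}_1\circ A\vec{t}_2 = A^\tau\vec{t}_1\circ\vec{t}_2$ recorded in the Introduction.

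First I would establish the commutation relation $T_{\vec{\ell}}D_A = D_A T_{A\vec{\ell}}$. Evaluating the left side at a point $\vec{t}$ gives $(D_A f)(\vec{t}-\vec{\ell}) = \sqrt{2}\,f\big(A(\vec{t}-\vec{\ell})\big) = \sqrt{2}\,f(A\vec{t}-A\vec{\ell})$, while the right side gives $\sqrt{2}\,(T_{A\vec{\ell}}f)(A\vec{t}) = \sqrt{2}\,f(A\vec{t}-A\vec{\ell})$; the two agree. This is just the observation that $A$ carries the integer translation by $\vec{\ell}$ to the translation by $A\vec{\ell}$, and no subtlety arises. For the modulation identity $\widehat{T}_{\vec{\ell}} = M_{e^{- i \vec{s}\circ\vec{\ell}}}$ I would compute $(\F T_{\vec{\ell}} f)(\vec{s})$ from the integral definition and substitute $\vec{u}=\vec{t}-\vec{\ell}$; the shift in the argument of $f$ factors out the unimodular scalar $e^{-i\vec{s}\circ\vec{\ell}}$ in front of $\hat f(\vec{s})$, which is precisely $M_{e^{- i \vec{s}\circ\vec{\ell}}}\F f$. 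Conjugating by $\F$ yields the stated identity.

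The one computation requiring care — and the place I expect the only genuine bookkeeping — is the dilation identity $\widehat{D}_A = U_{(A^{-1})^\tau}$. Again starting from the integral formula for $(\F D_A f)(\vec{s})$, I would change variables by $\vec{u}=A\vec{t}$. The Jacobian contributes a factor $|\det A|^{-1}$, which combines with the $\sqrt{|\det A|}$ built into $D_A$ to leave $|\det A|^{-1/2}$, exactly the normalization constant $\sqrt{|\det (A^{-1})^\tau|}$ attached to $U_{(A^{-1})^\tau}$. Simultaneously, the phase $\vec{s}\circ A^{-1}\vec{u}$ must be rewritten as $(A^{-1})^\tau\vec{s}\circ\vec{u}$ by applying the adjoint relation with $A^{-1}$ in place of $A$, which transfers the matrix onto the frequency variable. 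Getting both the scalar normalization and the transposed-inverse matrix to land correctly is the crux of the proof. The remaining equalities are then formal: $(A^{-1})^\tau=(A^\tau)^{-1}$ is matrix algebra; $U_{(A^\tau)^{-1}}=D_{A^\tau}^{-1}$ follows from $D_{A^\tau}=U_{A^\tau}$ together with the relation $U_S^{-1}=U_{S^{-1}}$ already proved in Lemma \ref{1} (note $|\det A^\tau|=|\det A|=2$, so $D_{A^\tau}$ is defined); and $D_{A^\tau}^{-1}=D_{A^\tau}^{*}$ holds because $D_{A^\tau}=U_{A^\tau}$ is unitary.

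Finally, the unitarity claims are immediate. The operators $T_{\vec{\ell}}$ and $\F$ are the standard unitaries (translation invariance of Lebesgue measure and Plancherel's theorem), $D_A=U_A$ is unitary as already noted after its definition, and $M_{e^{- i \vec{s}\circ\vec{\ell}}}$ is unitary because multiplication by the unimodular function $e^{-i\vec{s}\circ\vec{\ell}}$ preserves the $L^2$ norm and is inverted by $M_{e^{ i \vec{s}\circ\vec{\ell}}}$.
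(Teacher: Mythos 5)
Your proposal is correct and follows essentially the same route as the paper: pointwise verification of the commutation relation, and integral computations with the substitutions $\vec{u}=\vec{t}-\vec{\ell}$ and $\vec{u}=A\vec{t}$ (using the adjoint relation $\vec{t}_1\circ A\vec{t}_2 = A^\tau\vec{t}_1\circ\vec{t}_2$ and the Jacobian--normalization bookkeeping) for the two Fourier-conjugation identities. In fact your write-up is slightly cleaner on one point: the paper's own computation of $\widehat{T}_{\vec{\ell}}$ carries a sign typo (it displays $e^{i\vec{s}\circ\vec{\ell}}$ rather than $e^{-i\vec{s}\circ\vec{\ell}}$), while your substitution yields the sign stated in the lemma.
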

\begin{remark}\label{vv}
For the translation operator $T_{A^{-J}\vec{\ell}}$ where   vector $A^{-J}\vec{\ell}$
 is in the refined lattice $A^{-J}\Z^2$ we have
 $T_{A^{-J}\vec{\ell}} D_A^{J}=D_A ^J T_{\vec{\ell}}.$ We also have
$\overline{\widehat{D}_A^{J}\widehat{\varphi}(\vt)} =
\frac{1}{\sqrt{2^J}}\overline{\widehat{\varphi}((A^\tau)^{-J}\vt)}.$ We will need this in the proof of Lemma \ref{identitylj}.
We leave these to the reader to verify, using the same method as in the proof of Lemma \ref{k}.
\end{remark}
\begin{proof}
We have
\begin{eqnarray*}
(\widehat{T}_{\vec{\ell}}\hat{f})(\vec{s})
&=& (\F T_{\vec{\ell}} \F^{-1} \F f)(\vec{s}) \\ &=&  \frac{1}{2\pi}\int_{\R^2}e^{-i\vec{s}\circ \vec{t}}f(\vec{t}-\vec{\ell})d\vt,\\
&=& e^{i\vec{s}\circ\vec{\ell}} \cdot \frac{1}{2\pi} \int_{\R^2}e^{-i\vec{s}\circ\vec{u}}f(\vec{u})d\vec{u},\\
&=& e^{i\vec{s}\circ\vec{\ell}} \cdot \hat{f}(\vec{s})\\
&=& (M_{e^{i\vec{s}\circ\vec{\ell}}}\hat{f})(\vec{s}).\\
\end{eqnarray*}
Here the substitution $\vec{u}=\vec{t}-\vec{\ell}$ is used.
Next, we have
\begin{eqnarray*}
(\widehat{D}_{A}\hat{f})(\vec{s})
&=& (\F D_{A}f)(\vec{s}) \\
&=& \frac{1}{2\pi} \int_{\R^2}e^{-i\vec{s}\circ\vec{t}}\cdot \sqrt{2} f(A\vec{t})d\mu\\
&=& \frac{1}{\sqrt{2}} \cdot \frac{1}{2\pi}  \int_{\R^2}e^{-i\vec{s}\circ(A^{-1}\vec{u})}f(\vec{u})d\nu\\
&=& \frac{1}{\sqrt{2}} \cdot \frac{1}{2\pi}  \int_{\R^2}e^{-i{({A^{-1}})^\tau}\vec{s}\circ\vec{u}}f(\vec{u})d\nu.
\end{eqnarray*}
Here substitutions $\vec{u}=A\vec{t}$  and $d\nu = 2 d \mu$ are used.
 So we have
\begin{eqnarray*}
(\widehat{D}_{A}\hat{f})(\vec{s})
&=& \sqrt{\det((A^{-1})^\tau)} \cdot \hat{f}(({A^{-1}})^\tau\vec{s})\\
&=& (U_{{(A^{-1})}^\tau} \hat{f})(\vec{s}).
\end{eqnarray*}
This implies that
\begin{eqnarray*}
\widehat{D}_{A}
&=& U_{{(A^{-1})}^\tau} = U_{{(A^\tau)}^{-1}}=D_{A^\tau}^{-1}=D_{A^\tau}^*.
\end{eqnarray*}
Also, we have
\begin{eqnarray*}
T_{\vec{\ell}} D_A f(\vec{t})
&=& \sqrt{2} T_{\vec{\ell}} f(A\vec{t}) \\
&=& \sqrt{2} f(A(\vec{t}-\vec{\ell})) \\
&=& \sqrt{2} f(A\vec{t}-A\vec{\ell})) \\
&=& D_AT_{A\vec{\ell}} f(\vec{t}).\\
T_{\vec{\ell}} D_A &=& D_AT_{A\vec{\ell}}.
\end{eqnarray*}
\end{proof}

The integral lattice $\Z^2$ is an Abelian group under vector addition. The subset $(2\Z)^2$ is a subgroup. For a fixed $2\times 2$ integral matrix $A$ with $|\det(A)| =2,$
the two sets $A \Z^2$ and $A^\tau \Z^2$ are proper subgroups of $\Z^2$ containing $(2\Z)^2$. The two quotient groups  $\frac{A\Z^2}{(2\Z)^2}$ and $\frac{A^\tau \Z^2}{(2\Z)^2}$ are two proper subgroups of the quotient group $\frac{\Z^2}{(2\Z)^2}$ which has $4$ elements,
$\Big\{ \left(\begin{array}{c} 0 \\  0  \end{array}\right)+(2\Z)^2,
\left(\begin{array}{c} 0 \\  1  \end{array}\right)+(2\Z)^2,\left(\begin{array}{c} 1 \\  0  \end{array}\right)+(2\Z)^2,\left(\begin{array}{c} 1 \\  1  \end{array}\right)+(2\Z)^2 \Big\}.$
If the two elements of the subgroup $\frac{A \Z^2}{(2\Z)^2}$ are  $\vec{0}+(2\Z)^2, \vec{s}+(2\Z)^2,$ we will call $\{\vec{0},\vec{s}\}$ the \textit{generators} for $A\Z^2.$
 We define the generators for $A^\tau \Z^2$ in the similar way. $A\Z^2 = A^\tau\Z^2$ if and only if they have the same generators (in the $4$ elements).

\begin{proposition}\label{properties}
Let $A$ be one of the six matrices in (\ref{newsix}) as in Proposition \ref{linktonew6}.
Then there exist vectors $\vec{\ell}_A$ and $\vec{q}_A$ in $\Z^2$ with the following properties,
\begin{enumerate}
  \item $\Z^2 = A^\tau \Z^2 \dotcup (\vec{\ell}_A + A^\tau \Z^2)$;
  \item $\vec{q}_A \circ A^\tau \Z^2 \subseteq 2\Z$ and
        $\vec{q}_A \circ (\vec{\ell}_A + A^\tau \Z^2) \subseteq 2\Z+1$;
  \item\label{2z} $A^\tau \vec{q}_A \in (2\Z)^2.$
  \item\label{aatau} $A \Z^2 = A^\tau \Z^2.$
\end{enumerate}
\end{proposition}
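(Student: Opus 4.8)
The plan is to push the entire statement into linear algebra over the field $\mathbb{F}_2 \equiv \Z/2\Z$ by reducing modulo the subgroup $(2\Z)^2$, i.e. through the quotient map $\pi\colon \Z^2 \to \Z^2/(2\Z)^2 \cong \mathbb{F}_2^2$. Writing $\bar M$ for the entrywise reduction modulo $2$ of an integer matrix or vector, the single observation I would record first is that \emph{each} of the six matrices in (\ref{newsix}) is symmetric modulo $2$, that is $\bar A = \bar A^\tau$: a direct reduction shows $\bar A = \left[\begin{smallmatrix}1&1\\1&1\end{smallmatrix}\right]$ for the first four matrices and $\bar A = \left[\begin{smallmatrix}1&0\\0&0\end{smallmatrix}\right]$ for the last two. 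Since $|\det A| = 2$ we have $\det \bar A = 0$ while $\bar A \neq 0$, so $\bar A$ has rank exactly $1$ over $\mathbb{F}_2$ and its column space $\mathrm{Col}(\bar A)$ is a one–dimensional line in $\mathbb{F}_2^2$.

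Next I would make the group–theoretic translation. Because $2A^{-1}$ is an integer matrix (as $A^{-1}=(\det A)^{-1}\mathrm{adj}(A)$ with $\det A = \pm 2$), we get $A^{-1}(2\Z)^2 \subseteq \Z^2$, so $(2\Z)^2 \subseteq A\Z^2$, and likewise $(2\Z)^2 \subseteq A^\tau\Z^2$; hence $\pi(A\Z^2)$ is exactly the $\mathbb{F}_2$–span of the columns of $\bar A$, namely $\mathrm{Col}(\bar A)$, and similarly $\pi(A^\tau\Z^2) = \mathrm{Col}(\bar A^\tau)$. With the rank computation above these are lines, so $A\Z^2$ and $A^\tau\Z^2$ are index–$2$ subgroups of $\Z^2$. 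Property (\ref{aatau}) is then immediate: symmetry modulo $2$ gives $\mathrm{Col}(\bar A) = \mathrm{Col}(\bar A^\tau)$, and two index–$2$ subgroups that contain $(2\Z)^2$ and have the same image in $\mathbb{F}_2^2$ must coincide, so $A\Z^2 = A^\tau\Z^2$.

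For the remaining items I would specify $\vec\ell_A$ and $\vec q_A$ through their images in $\mathbb{F}_2^2$. For (1) choose $\vec\ell_A$ with $\pi(\vec\ell_A) \notin \mathrm{Col}(\bar A^\tau)$ (e.g. $\vec\ell_A = \left(\begin{smallmatrix}1\\0\end{smallmatrix}\right)$ in the first four cases and $\left(\begin{smallmatrix}0\\1\end{smallmatrix}\right)$ in the last two); since $A^\tau\Z^2$ has index $2$, this one coset representative yields $\Z^2 = A^\tau\Z^2 \dotcup (\vec\ell_A + A^\tau\Z^2)$. For (2) choose $\vec q_A$ so that $\bar{\vec q}_A$ is the nonzero vector of the one–dimensional orthogonal complement $\mathrm{Col}(\bar A^\tau)^{\perp}$ in $\mathbb{F}_2^2$. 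Then $\vec q_A \circ \vec v \equiv \bar{\vec q}_A \cdot \pi(\vec v) \pmod 2$ is a nonzero linear functional vanishing precisely on $\pi(A^\tau\Z^2) = \mathrm{Col}(\bar A^\tau)$, which is exactly the assertion $\vec q_A \circ A^\tau\Z^2 \subseteq 2\Z$ and $\vec q_A \circ (\vec\ell_A + A^\tau\Z^2) \subseteq 2\Z+1$. Concretely one gets $\vec q_A = \left(\begin{smallmatrix}1\\1\end{smallmatrix}\right)$ in the first four cases and $\left(\begin{smallmatrix}0\\1\end{smallmatrix}\right)$ in the last two.

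Finally, property (\ref{2z}) falls out of the same choice: using $\bar A^\tau = \bar A$, the $i$-th coordinate of $A^\tau\vec q_A$ reduces modulo $2$ to $(\text{column }i\text{ of }\bar A)\cdot\bar{\vec q}_A$, which vanishes because $\bar{\vec q}_A$ was taken orthogonal to every column of $\bar A$; hence $A^\tau\vec q_A \in (2\Z)^2$. The only genuine work is therefore the opening observation that all six matrices are symmetric and of rank $1$ modulo $2$ — a finite check, conveniently organized by the two reduction patterns $\left[\begin{smallmatrix}1&1\\1&1\end{smallmatrix}\right]$ and $\left[\begin{smallmatrix}1&0\\0&0\end{smallmatrix}\right]$, so the two patterns can be verified once each rather than all six individually. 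After that the four assertions are uniform consequences of elementary $\mathbb{F}_2$–linear algebra. A purely computational alternative would skip the framing and simply tabulate $\vec\ell_A,\vec q_A$ for each of the six matrices and verify the four conditions by hand, but the $\mathbb{F}_2$ argument is what makes transparent \emph{why} such vectors exist.
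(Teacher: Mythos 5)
Your proof is correct, and it takes a genuinely different route from the paper's. The paper argues by direct case-by-case verification: it works out the first matrix of \eqref{newsix} in full (finding the generators of $A\Z^2$ and $A^\tau\Z^2$ inside $\Z^2/(2\Z)^2$, exhibiting $\vec{\ell}_A$ and $\vec{q}_A$, and checking properties (1)--(3) by hand), and then tabulates $\vec{\ell}_A$, $\vec{q}_A$ and the generators for the remaining five matrices, leaving those verifications to the reader. You instead isolate the one structural fact that makes all four properties hold at once: every matrix in \eqref{newsix} satisfies $\bar A=\bar A^\tau$ and has rank $1$ over $\mathbb{F}_2$, so $\pi(A\Z^2)=\mathrm{Col}(\bar A)=\mathrm{Col}(\bar A^\tau)=\pi(A^\tau\Z^2)$ is a single line, and then (1)--(4) follow uniformly from the correspondence theorem and elementary $\mathbb{F}_2$-linear algebra, with $\pi(\vec{\ell}_A)$ chosen off the line and $\bar{\vec q}_A$ chosen orthogonal to it; your concrete choices reproduce exactly the paper's table. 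What the paper's approach buys is the explicit data, which is what gets reused downstream (in Proposition \ref{Ataugamma}, Corollary \ref{click}, Proposition \ref{filter} and Definition \ref{psi}); what your approach buys is an explanation of \emph{why} such vectors exist, a two-pattern check in place of six, and a transparent account of the remark following the proposition: property (\ref{aatau}) fails for $\left[\begin{smallmatrix} 0 & -2 \\ 1 & -1 \end{smallmatrix}\right]$ precisely because that matrix is not symmetric mod $2$. One point you handled correctly but implicitly: over $\mathbb{F}_2$ the line $\mathrm{Col}(\bar A)$ may be self-orthogonal (it is, in the first four cases, where $\bar{\vec q}_A$ spans the line itself), so ``orthogonal complement'' does not give a direct-sum complement; since your argument uses only orthogonality and the fact that a nonzero functional on $\mathbb{F}_2^2$ has a one-dimensional kernel, nothing breaks, but it is worth saying explicitly.
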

\begin{remark}
Equation  (\ref{aatau}) is not true in general. Let $A$ be $\left[\begin{array}{cc}  0 & -2 \\  1 & -1 \end{array}\right]$ which is in the list (\ref{six}). Then $A^\tau = \left[\begin{array}{cc}  0 & 1 \\  -2 & -1 \end{array}\right].$
It is left to the reader to verify that $\Big\{ \left(\begin{array}{c} 0 \\  0  \end{array}\right),
\left(\begin{array}{c} 0 \\  1  \end{array}\right) \Big\}$ is the generator for $A^\tau\Z^2$ while
 the generator for $A\Z^2$ is
$\Big\{ \left(\begin{array}{c} 0 \\  0  \end{array}\right),
\left(\begin{array}{c} 1 \\  1  \end{array}\right) \Big\}$.
So $A\Z^2 \neq A^\tau \Z^2.$
\end{remark}
\begin{proof}
1.  Let
$A=\left[\begin{array}{cc} 1 & 1 \\  1 & -1 \end{array}\right].$ Then
$A^\tau=\left[\begin{array}{cc} 1 & 1 \\  1 & -1 \end{array}\right].$
It is left to the reader to verify that
$\Big\{ \left(\begin{array}{c} 0 \\  0  \end{array}\right),
\left(\begin{array}{c} 1 \\  1  \end{array}\right) \Big\}$ is the generator for both
$A\Z^2$ and $A^\tau \Z^2.$
Therefore, we have equation (4) $A\Z^2=A^\tau \Z^2$
\footnote{In this example, $A=A^\tau.$ We do not assume this condition in general .}
since
\begin{eqnarray*}
  A \Z^2  &=& \Big(\left(\begin{array}{c} 0 \\  0  \end{array}\right)+(2\Z)^2\Big) \dotcup \Big(\left(\begin{array}{c} 1 \\  1  \end{array}\right)+(2\Z)^2\Big), \\
  A^\tau \Z^2  &=& \Big(\left(\begin{array}{c} 0 \\  0  \end{array}\right)+(2\Z)^2\Big) \dotcup \Big(\left(\begin{array}{c} 1 \\  1  \end{array}\right)+(2\Z)^2\Big).
\end{eqnarray*}
This also implies that
\begin{equation*}
\Z^2\backslash A^\tau \Z^2 = \Big(\left(\begin{array}{c} 1 \\  0  \end{array}\right)+(2\Z)^2\Big) \dotcup \Big(\left(\begin{array}{c} 0 \\  1  \end{array}\right)+(2\Z)^2\Big)=\left(\begin{array}{c} 1 \\  0  \end{array}\right) +A^\tau\Z^2,
\end{equation*}
since $\big\{
   \Big(\left(\begin{array}{c} 0 \\  0  \end{array}\right)+(2\Z)^2\Big), \Big(\left(\begin{array}{c} 1 \\  1  \end{array}\right)+(2\Z)^2\Big),
       \Big(\left(\begin{array}{c} 1 \\  0  \end{array}\right)+(2\Z)^2\Big), \Big(\left(\begin{array}{c} 0 \\  1  \end{array}\right)+(2\Z)^2\Big)
 \big\}$ is a partition of $\Z^2.$
So, the vector
 $\vec{\ell}_A\equiv\left(\begin{array}{c} 1 \\  0  \end{array}\right)$
satisfies the equation
\begin{equation*}
\Z^2 = A^\tau \Z^2 \dotcup (\vec{\ell}_A + A^\tau \Z^2).
\end{equation*}
Define $\vec{q}_A \equiv \left(\begin{array}{c} 1 \\  1  \end{array}\right).$ It is left to the reader to verify that $q_A \circ A^\tau \Z^2$ are even numbers and
$\vec{q}_A \circ (\vec{\ell}_A +A^\tau \Z^2)$ are odd numbers since $\vec{q}_A \circ \vec{\ell}_A=1.$ Finally, $A^\tau \vec{q}_A = \left[\begin{array}{cc} 1 & 1 \\  1 & -1 \end{array}\right] \left(\begin{array}{c} 1 \\  1  \end{array}\right)=\left(\begin{array}{c} 2 \\  0 \end{array}\right) \in (2\Z)^2.$ This proves property (3).

2. We list all six matrices in the list (\ref{newsix}) and their corresponding $\vec{\ell}_A$ and $\vec{q}_A$ in the next table. The reader may verify as we did in part 1, the equations in (1),(2) and (3) are satisfied. Also, it is left to the reader to verify that for each matrix in the six cases, the generators for $A\Z^2$ and $A^\tau \Z^2$ are the same. So we have now established the equation in (\ref{aatau}).

\begin{equation*}
\begin{array}{cccccc}
  A & A^\tau & \text{gen of } A^\tau \Z^2 & \vec{\ell}_A & \vec{q}_A & A^\tau \vec{q}_A\\
 &  &  &  &  & \\
  \left[\begin{array}{cc} 1 & 1 \\  1 & -1 \end{array}\right]   &
  \left[\begin{array}{cc} 1 & 1 \\  1 & -1 \end{array}\right]   &
  \left(\begin{array}{c} 0 \\  0  \end{array}\right),
\left(\begin{array}{c} 1 \\  1  \end{array}\right)              &
\left(\begin{array}{c} 1 \\  0  \end{array}\right)              &
\left(\begin{array}{c} 1 \\  1  \end{array}\right)              &
\left(\begin{array}{c} 2 \\  0 \end{array}\right)               \\
 &  &  &  &  & \\
  \left[\begin{array}{cc} 1 & -3 \\  1 & -1 \end{array}\right]   &
  \left[\begin{array}{cc} 1 & 1 \\  -3 & -1 \end{array}\right]   &
  \left(\begin{array}{c} 0 \\  0  \end{array}\right),
\left(\begin{array}{c} 1 \\  1  \end{array}\right)              &
\left(\begin{array}{c} 1 \\  0  \end{array}\right)              &
\left(\begin{array}{c} 1 \\  1  \end{array}\right)              &
\left(\begin{array}{c} 2 \\  -4 \end{array}\right)               \\
 &  &  &  &  & \\
  \left[\begin{array}{cc} 1 & 1 \\  -1 & 1 \end{array}\right]   &
  \left[\begin{array}{cc} 1 & -1 \\  1 & 1 \end{array}\right]   &
  \left(\begin{array}{c} 0 \\  0  \end{array}\right),
\left(\begin{array}{c} 1 \\  1  \end{array}\right)              &
\left(\begin{array}{c} 1 \\  0  \end{array}\right)              &
\left(\begin{array}{c} 1 \\  1  \end{array}\right)              &
\left(\begin{array}{c} 0 \\  2 \end{array}\right)               \\
 &  &  &  &  & \\
  \left[\begin{array}{cc} -1 & -1 \\  1 & -1 \end{array}\right]   &
  \left[\begin{array}{cc} -1 & 1 \\  -1 & -1 \end{array}\right]   &
  \left(\begin{array}{c} 0 \\  0  \end{array}\right),
\left(\begin{array}{c} 1 \\  1  \end{array}\right)              &
\left(\begin{array}{c} 1 \\  0  \end{array}\right)              &
\left(\begin{array}{c} 1 \\  1  \end{array}\right)              &
\left(\begin{array}{c} 0 \\  -2 \end{array}\right)               \\
 &  &  &  &  & \\
  \left[\begin{array}{cc} -1 & 2 \\  -2 & 2 \end{array}\right]   &
  \left[\begin{array}{cc} -1 & -2 \\  2 & 2 \end{array}\right]   &
  \left(\begin{array}{c} 0 \\  0  \end{array}\right),
\left(\begin{array}{c} 1 \\  0  \end{array}\right)              &
\left(\begin{array}{c} 0 \\  1  \end{array}\right)              &
\left(\begin{array}{c} 0 \\  1  \end{array}\right)              &
\left(\begin{array}{c} -2 \\ 2 \end{array}\right)               \\
 &  &  &  &  & \\
  \left[\begin{array}{cc} 1 & -2 \\  2 & -2 \end{array}\right]   &
  \left[\begin{array}{cc} 1 & 2 \\  -2 & -2 \end{array}\right]   &
  \left(\begin{array}{c} 0 \\  0  \end{array}\right),
\left(\begin{array}{c} 1 \\  0  \end{array}\right)              &
\left(\begin{array}{c} 0 \\  1  \end{array}\right)              &
\left(\begin{array}{c} 0 \\  1  \end{array}\right)              &
\left(\begin{array}{c} 2 \\  -2 \end{array}\right)               \\
\end{array}
\end{equation*}
\end{proof}

Two subsets $\mathcal{G}_1, \mathcal{G}_2$ of $\R^2$ are said to be $2$-{\it translation-equivalent}, or
$\mathcal{G}_1 \stackrel{2}{\sim} \mathcal{G}_2$, if
there exists a  mapping  $\Theta$ from $\mathcal{G}_1$ onto $\mathcal{G}_2$ with the property that
\begin{equation*}
    \Theta (\vec{t}) -\vec{t} \in (2\Z)^2, \ \vec{t}\in \mathcal{G}_1 \text{ a.e.}
\end{equation*}

\begin{proposition}\label{Ataugamma}
Let $A$ be one of the six expansive matrices in Proposition \ref{properties}, $\vec{q}_A$ be the corresponding vector related to $A$
and $\Gamma_0\equiv [-1,1]^2.$ Then, there are two measurable sets $\Gamma_1$ and $\Gamma_2$ such that
\begin{eqnarray*}
  \Gamma_1 &\stackrel{2}{\sim}& \Gamma_0; \\
  \Gamma_2 &\stackrel{2}{\sim}& \Gamma_0; \\
  A^\tau \Gamma_0 &\stackrel{2}{\sim}& \Gamma_1 \dotcup (\vec{q}_A + \Gamma_2). \\
\end{eqnarray*}
\end{proposition}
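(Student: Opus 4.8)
The plan is to prove the statement uniformly for all six matrices, using only that $|\det A^\tau|=2$, that $A^\tau$ is integral, and that $\vec{q}_A\in\Z^2$; no case analysis should be needed. The guiding observation is that $\stackrel{2}{\sim}$ is an equivalence relation whose classes are detected by the projection $\pi\colon\R^2\to\R^2/(2\Z)^2$: a measurable set $\mathcal{G}$ satisfies $\mathcal{G}\stackrel{2}{\sim}\Gamma_0$ precisely when $\pi$ maps $\mathcal{G}$ bijectively (a.e.) onto the torus, i.e.\ when $\mathcal{G}$ is a fundamental domain for $(2\Z)^2$. Indeed, since $\Gamma_0=[-1,1]^2$ is itself such a fundamental domain, for any other fundamental domain $F$ the map $\Theta:=(\pi|_{\Gamma_0})^{-1}\circ(\pi|_{F})$ is an a.e.\ bijection $F\to\Gamma_0$ with $\Theta(\vec t)-\vec t\in(2\Z)^2$, and it is measurable because it is just the reduction of $\vec t$ modulo $(2\Z)^2$ into $[-1,1]^2$. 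Thus the whole problem reduces to exhibiting a measurable splitting $A^\tau\Gamma_0=E_1\dotcup E_2$ into two fundamental domains for $(2\Z)^2$.

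First I would show that $A^\tau\Gamma_0$ double-covers the torus. Applying the linear bijection $A^\tau$ to the tiling $\R^2=\bigcup_{\vec w\in(2\Z)^2}(\vec w+\Gamma_0)$ gives $\R^2=\bigcup_{\vec v\in 2A^\tau\Z^2}(\vec v+A^\tau\Gamma_0)$, so $A^\tau\Gamma_0$ tiles $\R^2$ under $2A^\tau\Z^2$. Since $2A^\tau\Z^2\subseteq(2\Z)^2$ with $[(2\Z)^2:2A^\tau\Z^2]=[\Z^2:A^\tau\Z^2]=|\det A^\tau|=2$, splitting $(2\Z)^2$ into the two cosets of $2A^\tau\Z^2$ shows that for almost every $\vec x$ the fiber $\{\vec w\in(2\Z)^2:\vec x+\vec w\in A^\tau\Gamma_0\}$ has exactly two elements; that is, $\pi$ is a.e.\ two-to-one on $A^\tau\Gamma_0$.

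Next I would perform the splitting through the intermediate lattice $2A^\tau\Z^2$. Because $A^\tau\Gamma_0$ is a fundamental domain for $2A^\tau\Z^2$, it maps bijectively (a.e.) onto $\mathbb{T}'=\R^2/2A^\tau\Z^2$, and the natural covering $\mathbb{T}'\to\R^2/(2\Z)^2$ is two-to-one with deck group $(2\Z)^2/2A^\tau\Z^2\cong\Z/2$, generated by translation by a chosen $\vec c\in(2\Z)^2\setminus 2A^\tau\Z^2$. Choosing an explicit measurable fundamental domain $F'$ for this $\Z/2$-action on $\mathbb{T}'$ and pulling it back, I set $E_1$ and $E_2$ to be the portions of $A^\tau\Gamma_0$ lying over $F'$ and over $\vec c+F'$ respectively. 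Each $E_i$ then maps bijectively onto $\R^2/(2\Z)^2$, hence is a fundamental domain for $(2\Z)^2$, while $E_1\dotcup E_2=A^\tau\Gamma_0$ up to a null set.

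Finally I would assemble the conclusion. Put $\Gamma_1:=E_1$ and $\Gamma_2:=E_2-\vec{q}_A$. Since $\vec{q}_A\in\Z^2$, the translate $\Gamma_2$ is again a fundamental domain for $(2\Z)^2$, so the reduction step of the first paragraph yields $\Gamma_1\stackrel{2}{\sim}\Gamma_0$ and $\Gamma_2\stackrel{2}{\sim}\Gamma_0$. Moreover $\Gamma_1\dotcup(\vec{q}_A+\Gamma_2)=E_1\dotcup E_2=A^\tau\Gamma_0$ up to measure zero, whence $A^\tau\Gamma_0\stackrel{2}{\sim}\Gamma_1\dotcup(\vec{q}_A+\Gamma_2)$ via the identity map. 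I expect the one genuinely delicate point to be the measurable splitting of the double cover into two fundamental domains; there the index-two computation and the fundamental-domain property of $A^\tau\Gamma_0$ for $2A^\tau\Z^2$ do the real work, whereas the specific value of $\vec{q}_A$ enters only formally and is absorbed by the translation $E_2-\vec{q}_A$.
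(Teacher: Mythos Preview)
Your argument is correct and takes a genuinely different route from the paper. The paper proves the proposition by explicit case analysis: for each of the six matrices it draws $A^\tau\Gamma_0$, cuts it into triangles or rectangles, and exhibits the $(2\Z)^2$-translations that reassemble the pieces into $\Gamma_0$ and $\vec{q}_A+\Gamma_0$ (three figures, with the six matrices grouped by the shape of $A^\tau\Gamma_0$). Your approach instead extracts the underlying lattice fact---that $A^\tau\Gamma_0$ is a fundamental domain for $2A^\tau\Z^2\subset(2\Z)^2$ of index $2$, hence double-covers $\R^2/(2\Z)^2$ and splits measurably into two fundamental domains for $(2\Z)^2$---and then absorbs $\vec{q}_A$ by a translation. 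This works uniformly for any integral $A$ with $|\det A|=2$ and any $\vec{q}_A\in\Z^2$, so it is strictly more general than what the paper needs; it also makes transparent why Corollary~\ref{click} holds (each $E_i$ has the same integral as $\Gamma_\pi$ for $2\pi$-periodic integrands). What the paper's hands-on decompositions buy is concreteness: one sees the actual polygons $\Gamma_1,\Gamma_2$, and no appeal to measurable selection is required. The step you flag as delicate---producing a measurable fundamental domain for the free $\Z/2$-action on $\mathbb{T}'$---is routine; for instance, let $\sigma:A^\tau\Gamma_0\to\Gamma_0\cup(\vec{c}+\Gamma_0)$ be reduction modulo $2A^\tau\Z^2$ (both sets are fundamental domains for that lattice) and take $E_1=\sigma^{-1}(\Gamma_0)$, $E_2=\sigma^{-1}(\vec{c}+\Gamma_0)$.
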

\begin{corollary}\label{click}
Let $\Gamma_\pi \equiv \pi \Gamma_0 = [-\pi,\pi]^2,$ and $h(\vx)$ be a $2\pi$-periodical continuous function on $\R^2.$ Then
\begin{equation}
\int _{A^\tau \Gamma_\pi} h(\vx)d \mu = \int _{\Gamma_\pi} h(\vx)d \mu + \int _{\Gamma_\pi + \pi \vec{q}_A} h(\vx)d \mu
\end{equation}
\end{corollary}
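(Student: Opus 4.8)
The plan is to derive the corollary from Proposition \ref{Ataugamma} by scaling the three $2$-translation-equivalences by the factor $\pi$ and exploiting the $2\pi$-periodicity of $h$. The conceptual engine is a single measure-theoretic observation: if $\mathcal{G}_1 \stackrel{2}{\sim} \mathcal{G}_2$ via a bijection $\Theta$, then the dilated sets $\pi\mathcal{G}_1$ and $\pi\mathcal{G}_2$ are related by the dilated map $\tilde\Theta(\vec{s}) \equiv \pi\,\Theta(\pi^{-1}\vec{s})$, which satisfies $\tilde\Theta(\vec{s})-\vec{s} \in (2\pi\Z)^2$ almost everywhere. First I would record that any such piecewise translation is a measure-preserving bijection, so the change of variables $\vx=\tilde\Theta(\vec{s})$ is legitimate even though $\Theta$ need not be linear; and since $h$ is invariant under translations from the lattice $(2\pi\Z)^2$, we have $h(\tilde\Theta(\vec{s}))=h(\vec{s})$ a.e. Combining these gives the transfer principle
\begin{equation*}
\int_{\pi\mathcal{G}_2} h(\vx)\, d\mu = \int_{\pi\mathcal{G}_1} h(\tilde\Theta(\vec{s}))\, d\mu = \int_{\pi\mathcal{G}_1} h(\vx)\, d\mu,
\end{equation*}
valid whenever $\mathcal{G}_1 \stackrel{2}{\sim} \mathcal{G}_2$ and $h$ is $2\pi$-periodic.

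Next I would apply this transfer principle to each of the three relations in Proposition \ref{Ataugamma}, noting that dilation commutes with the linear map $A^\tau$ and with translations, so that $\pi(A^\tau\Gamma_0)=A^\tau\Gamma_\pi$, $\pi\Gamma_0 = \Gamma_\pi$, and $\pi(\vec{q}_A+\Gamma_2)=\pi\vec{q}_A+\pi\Gamma_2$. From $\Gamma_1 \stackrel{2}{\sim}\Gamma_0$ and $\Gamma_2\stackrel{2}{\sim}\Gamma_0$ I obtain $\int_{\pi\Gamma_1}h\,d\mu=\int_{\Gamma_\pi}h\,d\mu$ and $\int_{\pi\Gamma_2}h\,d\mu=\int_{\Gamma_\pi}h\,d\mu$. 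For the shifted copy I would translate the second equivalence by $\vec{q}_A$: the dilated set $\pi\vec{q}_A+\pi\Gamma_2$ is, after dilation, $2\pi$-translation-equivalent to $\pi\vec{q}_A+\Gamma_\pi=\Gamma_\pi+\pi\vec{q}_A$, hence $\int_{\pi\vec{q}_A+\pi\Gamma_2}h\,d\mu=\int_{\Gamma_\pi+\pi\vec{q}_A}h\,d\mu$. The key point to keep straight here is that the shift $\pi\vec{q}_A$ must be carried along rather than absorbed, since $\pi\vec{q}_A$ lies in $\pi\Z^2$ but not in general in the period lattice $(2\pi\Z)^2$.

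Finally I would apply the transfer principle to the third relation $A^\tau\Gamma_0 \stackrel{2}{\sim} \Gamma_1 \dotcup(\vec{q}_A+\Gamma_2)$. Since the union on the right is disjoint, the integral over the dilated right-hand set splits, yielding
\begin{equation*}
\int_{A^\tau\Gamma_\pi} h(\vx)\, d\mu = \int_{\pi\Gamma_1} h(\vx)\, d\mu + \int_{\pi\vec{q}_A+\pi\Gamma_2} h(\vx)\, d\mu .
\end{equation*}
Substituting the two evaluations from the previous step gives exactly $\int_{\Gamma_\pi}h\,d\mu + \int_{\Gamma_\pi+\pi\vec{q}_A}h\,d\mu$, which is the claim. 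I expect no genuine obstacle: the only things requiring care are the bookkeeping of the dilation factor $\pi$ together with the shift $\pi\vec{q}_A$, and the verification that a piecewise-translation bijection preserves Lebesgue measure, so that the change of variables is valid. Continuity of $h$ enters only to make its pointwise values well defined after the almost-everywhere identifications.
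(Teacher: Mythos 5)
Your proposal is correct and follows exactly the route the paper intends: the paper states Corollary \ref{click} as an immediate consequence of Proposition \ref{Ataugamma} (the proof printed after it belongs to the proposition), and your argument supplies precisely that implicit deduction --- scale the three $2$-translation-equivalences by $\pi$, note that the resulting piecewise translations move points by vectors in $(2\pi\Z)^2$ and hence preserve both Lebesgue measure and the value of the $2\pi$-periodic function $h$, and split the integral over the disjoint union $\pi\Gamma_1 \dotcup (\pi\vec{q}_A + \pi\Gamma_2)$. Your care in carrying the shift $\pi\vec{q}_A$ separately (since it is not in the period lattice) is exactly the right bookkeeping.
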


\begin{proof}
1. For any matrix $A$ in the collection
$\left[\begin{array}{cc} 1 & 1 \\  1 & -1 \end{array}\right] $,
$\left[\begin{array}{cc} 1 & 1 \\  -1 & 1 \end{array}\right]$ and
$\left[\begin{array}{cc} -1 & -1 \\  1 & -1 \end{array}\right]$,  $A^\tau \Gamma_0$ has the same
vertices of
$\left\{\left(\begin{array}{c} 2 \\  0  \end{array}\right),
\left(\begin{array}{c} -2 \\  0  \end{array}\right),
\left(\begin{array}{c} 0 \\  2  \end{array}\right),
\left(\begin{array}{c} 0 \\  -2  \end{array}\right)\right\}$.
By the table in the proof of Proposition \ref{properties}, the above three matrices share the same vector
$\vec{q}_A=\left(\begin{array}{c} 1 \\  1  \end{array}\right)$ and $\Gamma_0 \subset A^\tau \Gamma_0$ (Figure 1 left).
It is enough to discuss only one of the three cases.

\begin{figure}[h]
\begin{tikzpicture}[scale=.7]
\draw[step=1,gray,dashed] (-2,-2) grid (2,3);
\draw[->] (-2.5,0)-- (12.5,0)
node[below right] {$x$};
\draw[->] (0,-2.5)-- (0,3.5)
node[left] {$y$};
\draw[step=.25cm,gray,thick] (-2,0)-- (0,-2)node[below right] {$A$}-- (2,0)node[below right] {$B$}-- (0,2)node[below right] {$C$}-- (-2,0)node[below right] {$D$};
\draw[step=.25cm,gray] (-1,-1)-- (1,-1)node[below right] {$E$}-- (1,1)-- (-1,1)node[below right] {$H$}-- (-1,-1);

\draw[step=1,gray,dashed] (3,-2) grid (7,3);
\draw[->] (5,-2.5)-- (5,3.5)
node[left] {$y$};
\draw[step=.25cm,gray,thick] (-2+5+1,0+1)-- (0+5+1,-2+1)node[below right] {$E$}-- (2+5+1,0+1)node[below right] {$F$}-- (0+5+1,2+1)node[below right] {$G$}-- (-2+5+1,0+1)node[below right] {$H$};
\draw[step=.25cm,gray] (-1+5,-1)-- (1+5,-1)-- (1+5,1)-- (-1+5,1)-- (-1+5,-1);
\draw[step=.25cm,gray,thick] (5,0)-- (5,2);
\draw[step=.25cm,gray,thick] (4,1)-- (5,1);
\draw[step=.25cm,gray,thick] (5,2)-- (7,2);
\draw[step=.25cm,gray,thick] (6,2)-- (6,3);
\draw[step=.25cm,gray,thick] (7,0)-- (7,2);
\draw[step=.25cm,gray,thick] (7,1)-- (8,1);
\draw[step=.25cm,gray,thick] (5,0)-- (7,0);
\draw[step=.25cm,gray,thick] (6,0)-- (6,-1);
\draw[step=1cm,gray] (7.3,1.3)
node {1};
\draw[step=1cm,gray] (6.3,2.3)
node {2};
\draw[step=1cm,gray] (5.7,2.3)
node {3};
\draw[step=1cm,gray] (4.7,1.3)
node {4};
\draw[step=1cm,gray] (4.7,0.7)
node {5};
\draw[step=1cm,gray] (5.7,-0.3)
node {6};
\draw[step=1cm,gray] (6.3,-0.3)
node {7};
\draw[step=1cm,gray] (7.3,0.7)
node {8};

\draw[step=1,gray,dashed] (8,-2) grid (12,3);
\draw[->] (10,-2.5)-- (10,3.5)
node[left] {$y$};
\draw[step=.25cm,gray,thick] (-1+10,-1)-- (1+10,-1)node[below right] {$E$}-- (1+10,1)-- (-1+10,1)-- (-1+10,-1);
\draw[step=.25cm,gray,thick] (-1+10,-1)-- (1+10,1);
\draw[step=.25cm,gray,thick] (1+10,-1)-- (-1+10,1)node[below right] {$H$};
\draw[step=.25cm,gray,thick] (-1+10,0)-- (1+10,0);
\draw[step=.25cm,gray,thick] (10,-1)-- (10,1);
\draw[step=1cm,gray] (10.7,0.3)
node {3};
\draw[step=1cm,gray] (9.7,-0.7)
node {4};
\draw[step=1cm,gray] (10.3,-0.7)
node {1};
\draw[step=1cm,gray] (10.7,-0.3)
node {6};
\draw[step=1cm,gray] (10.3,0.7)
node {8};
\draw[step=1cm,gray] (9.7,0.7)
node {5};
\draw[step=1cm,gray] (9.3,0.3)
node {2};
\draw[step=1cm,gray] (9.3,-0.3)
node {7};
\end{tikzpicture}
\caption{$A ^\tau \Gamma_0$}
\end{figure}
 Consider $A= \left[\begin{array}{cc} 1 & 1 \\  1 & -1 \end{array}\right].$
Let $\Gamma_1 \equiv \Gamma_0$ and $\Gamma_2 \equiv A^\tau \Gamma_0 \backslash \Gamma_1.$
Notice that $\Gamma_2+\vec{q}_A$ (Figure 1, middle) is a disjoint union of eight triangles $\{\triangle_k,k=1,2,\cdots,8\}$. The following new triangles $\{\triangle_k ^\prime,k=1,2,\cdots,8\}$ form a partition for $\Gamma_0$ modulus zero measure sets (Figure 1, right).
We have
$
\triangle_1^\prime = \triangle_1 + \left(\begin{array}{c} -2 \\  -2  \end{array}\right) ;
\triangle_2^\prime = \triangle_2 + \left(\begin{array}{c} -2 \\  -2  \end{array}\right) ;
\triangle_3^\prime = \triangle_3 + \left(\begin{array}{c} 0 \\  -2  \end{array}\right) ;
\triangle_4^\prime = \triangle_4 + \left(\begin{array}{c} 0 \\  -2  \end{array}\right) ;
\triangle_5^\prime = \triangle_5;
\triangle_6^\prime = \triangle_6;
\triangle_7^\prime = \triangle_7 + \left(\begin{array}{c} -2 \\  0  \end{array}\right) ;
\triangle_8^\prime = \triangle_8 + \left(\begin{array}{c} -2 \\  0  \end{array}\right) .
$
This proves that
$\vec{q}_A + \Gamma_2\stackrel{2}{\sim} \Gamma_0.$

\begin{figure}[h]
\begin{tikzpicture}[scale=.7]
\draw[->] (-2.5,0)-- (12.5,0)
node[below right] {$x$};
\draw[->] (0,-4.5)-- (0,4.5)
node[left] {$y$};
\draw[step=1,gray,dashed] (-2,-4) grid (2,4);
\draw[step=.25cm,gray,thick] (2,-4)node[below right] {$A$}-- (0,2)node[below right] {$B$}-- (-2,4)node[below right] {$C$}-- (0,-2)node[below right] {$D$}-- (2,-4);
\draw[step=.25cm,gray] (-1,-1)-- (1,-1)-- (1,1)-- (-1,1)-- (-1,-1);
\draw[step=.25cm,gray,thick] (0,-2)-- (0,2);

\draw[step=1,gray,dashed] (3,-4) grid (7,4);
\draw[->] (5,-4.5)-- (5,4.5)
node[left] {$y$};
\draw[step=.25cm,gray] (-1+5,-1)-- (1+5,-1)-- (1+5,1)-- (-1+5,1)-- (-1+5,-1);
\draw[step=.25cm,gray,thick] (5,2)node[below right] {$B$}-- (3,4)node[below right] {$C$}
-- (3,0)node[below right] {$E$}-- (5,-2)node[below right] {$D$}-- (5,2);
\draw[step=1,gray,dashed,thick] (3,2)node[below right] {$F$}-- (5,2);
\draw (3,-2)node[below right] {$G$};
\draw[step=.25cm,gray,dashed,thick] (-2,4)-- (-2,0)node[below right] {$E$} --(0,-2);

\draw[->] (10,-4.5)-- (10,4.5)
node[left] {$y$};
\draw[step=1,gray,dashed] (8,-4) grid (12,4);
\draw[step=.25cm,gray] (-1+10,-1)-- (1+10,-1)-- (1+10,1)-- (-1+10,1)-- (-1+10,-1);
\draw[step=.25cm,gray,thick] (-2+10,-2)node[below right] {$G$}-- (-2+10,2)node[below right] {$F$}-- (0+10,2)node[below right] {$B$}-- (0+10,-2)node[below right] {$D$}-- (-2+10,-2);
\draw[step=.25cm,gray,thick] (-2+10,1)node[below right] {$H$};
\draw[step=.25cm,gray,thick] (-2+10+1,1)node[below right] {$I$};
\draw[step=.25cm,gray,thick] (-2+10+2,1)node[below right] {$J$};
\draw[step=.25cm,gray,thick] (-2+10,-1)node[below right] {$K$};
\draw[step=.25cm,gray,thick] (-2+10+1,-1)node[below right] {$L$};
\draw[step=.25cm,gray,thick] (-2+10+2,-1)node[below right] {$M$};
\draw[step=.25cm,gray,thick] (-2+10,1)-- (0+10,1);
\draw[step=.25cm,gray,thick] (-2+10,-1)-- (0+10,-1);
\draw[step=.25cm,gray,thick] (-2+10,-1)-- (-2+10,1);
\draw[step=.25cm,gray,thick,dashed] (5,-2)-- (3,-2)-- (3,0);
\end{tikzpicture}
\caption{$A^\tau \Gamma_0$}
\end{figure}

2. Let $A=\left[\begin{array}{cc} 1 & -3 \\  1 & -1 \end{array}\right].$
Then $A^\tau = \left[\begin{array}{cc} 1 & 1 \\  -3 & -1 \end{array}\right]$ and
$\vec{q}_A=\left(\begin{array}{c} 1 \\  1  \end{array}\right)$.
$A^\tau \Gamma_0$ is the parallelogram $ABCD$ (Figure 2, left). It is $2$-translation equivalent to parallelogram $BCED$ (Figure 2, middle) which is
the disjoint union of $\triangle ABD + \left(\begin{array}{c} -2 \\  2  \end{array}\right)$ and $\triangle CBD.$
The parallelogram $BCED$ is $2$-translation equivalent to rectangle $BFGD$ since
$\triangle DGE = \triangle BFC +\left(\begin{array}{c} 0 \\  -2  \end{array}\right).$
Now let $\Gamma_1$ be the square $MJHK.$ It is $2$-translation equivalent to $\Gamma_0$ since
$\Gamma_0 = MJIL\dotcup (LIHK + \left(\begin{array}{c} 0 \\  2  \end{array}\right)).$
Let $\Gamma_2 \equiv \Box JBFH \dotcup \Box DMKG.$ Thus
$\Big(\Big(\Box JBFH + \left(\begin{array}{c} 0 \\  -2  \end{array}\right) \Big)\dotcup \Box DMKG\Big)+ \vec{q}_A =\Gamma_0.$

\begin{figure}[h]
\begin{tikzpicture}[scale=.7]
\draw[->] (-3.5,0)-- (12.5,0)
node[below right] {$x$};
\draw[->] (0,-4.5)-- (0,4.5)
node[right] {$y$};
\draw[->] (6,-4.5)-- (6,4.5)
node[right] {$y$};
\draw[->] (10,-4.5)-- (10,4.5)
node[right] {$y$};
\draw[step=1,gray,dashed] (-3,-4) grid (12,4);
\draw[step=.25cm,gray,thick] (-3,4)-- (-1,0)node[below right] {$D$}-- (3,-4)node[below right] {$A$}-- (1,0)node[below right] {$B$}-- (-3,4)node[below right] {$C$};
\draw[step=.25cm,gray,thick] (-3+6,4)-- (-1+6,4)node[below right] {$E$}-- (1+6,0)node[below right] {$B$}-- (-1+6,0)node[below right] {$D$}-- (-3+6,4)node[below right] {$C$};
\draw[step=.25cm,gray,thick] (-3+10+2,4)node[below right] {$E$}-- (-1+10+2,4)node[below right] {$F$}-- (1+10,0)node[below right] {$B$}-- (-1+10,0)node[below right] {$D$}-- (-3+10+2,4);
\draw[step=.25cm,gray,thick] (-1+10,2)node[below right] {$G$}-- (1+10,2)node[below right] {$H$};
\draw[step=.25cm,gray,thick] (-1+10,1)node[below right] {$I$};
\draw[step=.25cm,gray,thick] (1+10,1)node[below right] {$J$};
\draw[step=.25cm,gray,thick] (-1+6,4)-- (-1+6,0);
\draw[step=.25cm,gray] (-1,-1)-- (1,-1)-- (1,1)-- (-1,1)-- (-1,-1);
\draw[step=.25cm,gray] (-1+6,-1)-- (1+6,-1)-- (1+6,1)-- (-1+6,1)-- (-1+6,-1);
\draw[step=.25cm,gray] (-1+10,-1)-- (1+10,-1)-- (1+10,1)-- (-1+10,1)-- (-1+10,-1);
\draw[step=.25cm,gray,thick] (0,-2)-- (0,2);
\draw[step=.25cm,gray,thick] (-1,4)node[below right] {$E$};
\draw[step=.25cm,gray,thick,dashed] (1,0)-- (-1,4)-- (-3,4);
\draw[step=.25cm,gray,thick,dashed] (5,4)-- (7,4)node[below right] {$F$};
\draw[step=.25cm,gray,thick,dashed] (7,0)-- (7,4);
\end{tikzpicture}
\caption{$A^\tau \Gamma_0$}
\end{figure}

3.
For any matrix $A$ in the collection
$\left[\begin{array}{cc} -1 & 2 \\  -2 & 2 \end{array}\right]$ and
$\left[\begin{array}{cc} 1 & -2 \\  2 & -2 \end{array}\right]$
their corresponding $A^\tau \Gamma_0$ has the same vertices
$\left\{
\left(\begin{array}{c} 3 \\  -4  \end{array}\right),
\left(\begin{array}{c} -3 \\  4  \end{array}\right),
\left(\begin{array}{c} 1 \\   0  \end{array}\right),
\left(\begin{array}{c} -1 \\  0  \end{array}\right)
\right\}$
and the same vector
$\vec{q}_A=\left(\begin{array}{c} 0 \\  1  \end{array}\right).$
It is enough to discuss only one of the cases.

Let $A=\left[\begin{array}{cc} 1 & -2 \\  2 & -2 \end{array}\right].$
Then by Proposition \ref{Ataugamma}  $A^\tau =
\left[\begin{array}{cc} 1 & 2 \\  -2 & -2 \end{array}\right]$ and $\vec{q}_A=\left(\begin{array}{c} 0 \\  1  \end{array}\right). $
As in Figure 3 (left), $A^\tau \Gamma_0$ is the parallegram $ABCD$.
So we have
\begin{eqnarray*}
A^\tau \Gamma_0
&\stackrel{2}{\sim}& \triangle BCD \dotcup \Big( \triangle ABD +
\left(\begin{array}{c} -2 \\  4  \end{array}\right)\Big)\\
&\stackrel{2}{\sim}& \triangle BED \dotcup
\Big( \triangle DEC +
\left(\begin{array}{c} 2 \\  0  \end{array}\right)\Big)\\
&=& \Box BFED.
\end{eqnarray*}
So we have shown $A^\tau \Gamma_0  \stackrel{2}{\sim}\Box BFED.$

Let $\Gamma_1 \equiv \Box BHGD $ and
$\Gamma_2 \equiv \Box HFEG.$
We have $\Gamma_1 \stackrel{2}{\sim} \Gamma_0 $ since\\
\begin{eqnarray*}
\Box BHGD &=& \Box BJID \dotcup \Box JHGI \\
&\stackrel{2}{\sim}&
\Box BJID \dotcup \Big( \Box JHGI + \left(\begin{array}{c} 0 \\  -2  \end{array}\right) \Big) \\
&=&\Gamma_0.
\end{eqnarray*}
Also, since $\Box HFEG +\left(\begin{array}{c} 0 \\  1  \end{array}\right)
=\Gamma_0 +\left(\begin{array}{c} 0 \\  4  \end{array}\right),$ we have $\Gamma_2 + \vec{q}_A \stackrel{2}{\sim} \Gamma_0.$

\end{proof}

\section{ Lawton's Equations and Filter Function $m_0$}
\bigskip

Through out the rest of this paper, $A$ will be one of the six matrices as stated in list (\ref{newsix}).
Let
$N_0 \in\N$ and $\mathcal{S}=\{h_{\vec{n}}:~{\vec{n}\in\Z^2}\}$ be a complex solution to the following system of equations
\begin{equation}\label{lawtoneq}
\left\{\begin{array}{l}
\sum_{\vec{n}\in\Z^2}h_{\vec{n}}\overline{h_{\vec{n}+\vec{k}}}=\delta_{\vec{0} \vec{k}},~ \vec{k}\in A^\tau\Z^2 \\
\sum_{\vec{n}\in\Z^2}h_{\vec{n}}=\sqrt{2}.
\end{array}\right.
\end{equation}
with the property that  $h_{\vec{n}}=0$ for all
$\vec{n}\in \Z^2 \backslash [-N_0,N_0]^2$. Let us denote
$\Lambda_0 \equiv \Z^2 \cap [-N_0,N_0]^2.$
Here  $\delta$ is the Kronecker's notation. We will call the system of equations
(\ref{lawtoneq})
\textit{Lawton's system of equations for normalized frame wavelets in $2D$}, or \textit{Lawton's equations}.

Define
\begin{equation}\label{m0}
    m_0(\vec{t})=\frac{1}{\sqrt{2}}\sum_{\vec{n}\in\Z^2} h_{\vec{n}} e^{-i\vec{n}\circ\vec{t}}
    =\frac{1}{\sqrt{2}}\sum_{\vec{n}\in\Lambda_0} h_{\vec{n}} e^{-i\vec{n}\circ\vec{t}},\vec{t} \in \C^2.
\end{equation}
This is a finite sum and $m_0(0)=1$. It is a $2\pi$-periodic trigonometric polynomial function in the sense that $m_0(\vec{t})=m_0(\vec{t}+\pi\vec{t}_0),\forall \vec{t}_0\in (2\Z)^2.$

\begin{proposition}\label{filter}
Let $A$ be an expansive $2 \times 2$ integral matrix and $\vec{q}_A  $ is as stated in
Proposition \ref{properties}.
Let $m_0$ be  defined as in (\ref{m0}), then $m_0$  satisfies
\begin{equation}\label{m0eq}
 |m_0(\vec{t})|^2+|m_0(\vec{t}+\pi \vec{q}_A)|^2=1, \ \forall \vec{t}\in\R^2.
 \end{equation}
\end{proposition}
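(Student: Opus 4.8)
The plan is to expand both squared moduli as finite Fourier series in the autocorrelation coefficients of the filter sequence, add them, and show that the phase factor introduced by the shift $\pi\vec{q}_A$ cancels one coset of $\Z^2$ while the surviving coset collapses to $1$ by Lawton's orthogonality relations.

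First I would write $|m_0(\vec{t})|^2 = m_0(\vec{t})\,\overline{m_0(\vec{t})}$, multiply out the two finite sums in \eqref{m0}, and reindex the resulting double sum by $\vec{k}=\vec{n}-\vec{m}$. This gives
\[
|m_0(\vec{t})|^2 = \frac{1}{2}\sum_{\vec{k}\in\Z^2} c_{\vec{k}}\, e^{-i\vec{k}\circ\vec{t}},
\qquad
c_{\vec{k}}\equiv \sum_{\vec{m}\in\Z^2} h_{\vec{m}+\vec{k}}\,\overline{h_{\vec{m}}},
\]
which is a finite sum because $h_{\vec{n}}$ is supported on $\Lambda_0$. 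Replacing $\vec{t}$ by $\vec{t}+\pi\vec{q}_A$ multiplies the $\vec{k}$-th term by $e^{-i\pi\,\vec{k}\circ\vec{q}_A}=(-1)^{\vec{k}\circ\vec{q}_A}$, the last equality holding because $\vec{k}\circ\vec{q}_A\in\Z$ (both vectors lie in $\Z^2$).

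Adding the two expansions, the $\vec{k}$-th coefficient acquires the factor $1+(-1)^{\vec{k}\circ\vec{q}_A}$, which is $2$ when $\vec{k}\circ\vec{q}_A$ is even and $0$ when it is odd. Here Proposition \ref{properties} does the decisive work: by parts (1) and (2) one has the disjoint union $\Z^2 = A^\tau\Z^2\dotcup(\vec{\ell}_A+A^\tau\Z^2)$, with $\vec{k}\circ\vec{q}_A$ even precisely on $A^\tau\Z^2$ and odd precisely on $\vec{\ell}_A+A^\tau\Z^2$. Hence every term indexed by the odd coset cancels and the sum collapses onto the subgroup:
\[
|m_0(\vec{t})|^2 + |m_0(\vec{t}+\pi\vec{q}_A)|^2 = \sum_{\vec{k}\in A^\tau\Z^2} c_{\vec{k}}\, e^{-i\vec{k}\circ\vec{t}}.
\]
Finally I would invoke Lawton's equations \eqref{lawtoneq}. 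Reindexing $c_{\vec{k}}=\sum_{\vec{n}} h_{\vec{n}}\overline{h_{\vec{n}-\vec{k}}}$ and using that $A^\tau\Z^2$ is a subgroup (so $-\vec{k}$ lies in it whenever $\vec{k}$ does), the first equation of \eqref{lawtoneq} yields $c_{\vec{k}}=\delta_{\vec{0}\vec{k}}$ for every $\vec{k}\in A^\tau\Z^2$. Only the $\vec{k}=\vec{0}$ term survives, and $c_{\vec{0}}=\sum_{\vec{n}}|h_{\vec{n}}|^2=1$ again by the first equation at $\vec{k}=\vec{0}$, giving the claimed identity for all $\vec{t}\in\R^2$.

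I do not expect a genuine obstacle, as the argument is essentially bookkeeping. The one point demanding care is the parity/coset alignment: that the phase factor $(-1)^{\vec{k}\circ\vec{q}_A}$ splits $\Z^2$ along exactly the partition on which Lawton's orthogonality is posed. This is precisely what properties (1) and (2) of Proposition \ref{properties} were arranged to guarantee. Note that the second Lawton equation $\sum_{\vec{n}}h_{\vec{n}}=\sqrt{2}$ plays no role here; it only fixes $m_0(\vec{0})=1$, which matters elsewhere in the construction.
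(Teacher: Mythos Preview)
Your proof is correct and follows essentially the same approach as the paper: expand both squared moduli into double sums, reindex by the difference $\vec{k}$, use the parity of $\vec{k}\circ\vec{q}_A$ from Proposition \ref{properties} to cancel the coset $\vec{\ell}_A+A^\tau\Z^2$, and then apply Lawton's equations \eqref{lawtoneq} on the surviving subgroup $A^\tau\Z^2$. Your presentation is slightly cleaner in that you first isolate the autocorrelation coefficients $c_{\vec{k}}$ before combining terms, and your remark that the second Lawton equation is unused here is a correct observation the paper does not make explicit.
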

\begin{remark}\label{less1}
By Proposition \ref{filter} we have $|m_0(\vec{t})|\leq 1$ for all $\vec{t}\in\R^2$. Also, (\ref{m0eq}) may not hold for $\vec{t}\in\C^2$ in general.
\end{remark}

\begin{proof}We have\footnote{In the calculation, the infinite sum is always converging since there are only finite many non-zero terms.}
\begin{eqnarray*}
& &|m_0(\vec{t})|^2+|m_0(\vec{t}+\pi\vec{q}_A)|^2
=  \frac{1}{2}\left| \sum_{\vec{m}\in\Z^2} h_{\vec{m}} e^{-i \vec{m}\circ\vec{t}}\right|^2 + \frac{1}{2}\left| \sum_{\vec{m}\in\Z^2} h_{\vec{m}} e^{-i \vec{m}\circ(\vec{t}+\pi\cdot \vec{q}_A)}\right|^2 \\
 &=& \frac{1}{2}\left[ \sum_{\vec{m}\in\Z^2,\vec{n}\in\Z^2} h_{\vec{m}}\overline{h_{\vec{n}}} e^{-i(\vec{m}-\vec{n})\circ\vec{t}} +\sum_{\vec{m}\in\Z^2,\vec{n}\in\Z^2} (-1)^{(\vec{m}-\vec{n})\circ \vec{q}_A} h_{\vec{m}}\overline{h_{\vec{n}}} e^{-i(\vec{m}-\vec{n})\circ\vec{t}}  \right] \\
 &=&
 \frac{1}{2}\left[
 \sum_{\vec{m}\in\Z^2,\vec{k}\in\Z^2}  h_{\vec{m}}\overline{h_{\vec{m}+\vec{k}}} e^{i\vec{k}\circ\vec{t}}+\sum_{\vec{m}\in\Z^2,\vec{k}\in\Z^2} (-1)^{-\vec{k}\circ \vec{q}_A} h_{\vec{m}}\overline{h_{\vec{m}+\vec{k}}} e^{i\vec{k}\circ\vec{t}}
 \right] \\
\end{eqnarray*}
Here  $\vec{n}$ is replaced by $\vec{m}+\vec{k}.$

By Proposition \ref{properties},
$\vec{k}\circ \vec{q}_A$ is odd when $\vec{k}\in(\ell_A+A^\tau \Z^2).$ Terms $(-1)^{-\vec{k}\circ \vec{q}_A} h_{\vec{m}}\overline{h_{\vec{m}+\vec{k}}} e^{i\vec{k}\circ\vec{t}}$ in the second sum cancel terms
$ h_{\vec{m}}\overline{h_{\vec{m}+\vec{k}}} e^{i\vec{k}\circ\vec{t}}$ in the first sum. The term  $\vec{k}\circ \vec{q}_A$ is even when $\vec{k}\in A^\tau \Z^2.$ So by definition of $\{h_{\vec{t}}\}$ we have
\begin{eqnarray*}
&& |m_0(\vec{t})|^2+|m_0(\vec{t}+\pi\vec{q}_A)|^2
 =
 \sum_{\vec{m}\in\Z^2,\vec{k}\in A^\tau\Z^2}  h_{\vec{m}}\overline{h_{\vec{m}+\vec{k}}} e^{i\vec{k}\circ\vec{t}} \\
 &=&
 \sum_{\vec{k}\in A^\tau\Z^2} \left( \sum_{\vec{m}\in\Z^2}
  h_{\vec{m}}\overline{h_{\vec{m}+\vec{k}}} \right) e^{i\vec{k}\circ\vec{t}}
 =
 \sum_{\vec{k}\in A^\tau\Z^2} \delta_{\vec{0} \vec{k}} e^{i\vec{k}\circ\vec{t}}
 = 1.
\end{eqnarray*}
\end{proof}

\section{The Frame Scaling Function $\varphi$}
\bigskip

Define
\begin{equation}\label{g}
 g(\vec{\xi})= \frac{1}{2\pi}\prod_{j=1}^{\infty}m_0((A^{\tau})^{-j} \vec{\xi}), \forall \vec{\xi}\in\R^2 \text{ and}\\
\end{equation}
\begin{equation}\label{phi}
\varphi = \F^{-1} g.
\end{equation}

In this section we will prove that $g$ and $\varphi$ are well defined $L^2(\R^2)$ functions. We will also prove that, in the extended domain $\C^2$, $g$ is an entire function and $\varphi$ has a compact support in $\R^2$. We will call $\varphi$ the \textit{scaling function}.

For $z\in\C,$ define
\begin{equation}\label{v_xi}
v(z)=\left\{
\begin{array}{ll}
\frac{e^z-1}{z} \quad &z\neq0 \\
 1 \quad &z=0.
\end{array} \right.
\end{equation}
The function $v(z)$ is an entire function on $\C.$

We will need the following inequality in the proofs of Lemma \ref{ineq} and Proposition \ref{compactsupport}.
\begin{lemma}\label{eiz}
\begin{equation}
\label{E} |e^{-iz}-1|\leq \min(2,|z|), \forall z\in \C,\mathfrak{Im} (z)\leq 0.
\end{equation}
\end{lemma}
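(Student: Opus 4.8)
The plan is to prove the two bounds $|e^{-iz}-1|\le 2$ and $|e^{-iz}-1|\le |z|$ separately and then take the minimum. The single fact that drives both estimates is the elementary computation of the modulus of the exponential: writing $z=x+iy$ with $y=\mathfrak{Im}(z)\le 0$, we have $-iz=y-ix$, so
\begin{equation*}
|e^{-iz}| = e^{\mathfrak{Re}(-iz)} = e^{\mathfrak{Im}(z)} \le 1,
\end{equation*}
where the last inequality uses the hypothesis $\mathfrak{Im}(z)\le 0$. This is exactly the place where the half-plane condition enters; without it the bound by $|z|$ fails.

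For the first bound I would simply invoke the triangle inequality:
\begin{equation*}
|e^{-iz}-1| \le |e^{-iz}| + 1 \le 1+1 = 2.
\end{equation*}

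For the second bound, the plan is to use the integral representation obtained from the fundamental theorem of calculus along the segment joining $0$ to $z$. Since $\frac{d}{dt}e^{-itz} = -iz\, e^{-itz}$, we have
\begin{equation*}
e^{-iz}-1 = \int_0^1 \frac{d}{dt}e^{-itz}\,dt = -iz\int_0^1 e^{-itz}\,dt,
\end{equation*}
and hence $|e^{-iz}-1| \le |z|\int_0^1 |e^{-itz}|\,dt$. Now for each $t\in[0,1]$ the same modulus computation gives $|e^{-itz}| = e^{t\,\mathfrak{Im}(z)} \le 1$, because $t\ge 0$ and $\mathfrak{Im}(z)\le 0$ force $t\,\mathfrak{Im}(z)\le 0$. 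Therefore $\int_0^1 |e^{-itz}|\,dt \le 1$ and we conclude $|e^{-iz}-1|\le |z|$.

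Combining the two displays yields $|e^{-iz}-1|\le \min(2,|z|)$, which is the claim. There is no real obstacle here; the only point requiring care is the correct evaluation of $|e^{-itz}|$ and the recognition that the sign hypothesis on $\mathfrak{Im}(z)$ is precisely what keeps $|e^{-itz}|\le 1$ throughout the integration interval, so I would state that computation explicitly rather than treat it as obvious.
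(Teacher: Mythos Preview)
Your proof is correct. The bound $|e^{-iz}-1|\le 2$ is obtained exactly as in the paper, via the triangle inequality together with $|e^{-iz}|=e^{\mathfrak{Im}(z)}\le 1$. For the bound $|e^{-iz}-1|\le |z|$, however, you take a genuinely different route: you use the integral representation $e^{-iz}-1=-iz\int_0^1 e^{-itz}\,dt$ and bound the integrand by $1$ pointwise. The paper instead expands $|e^{-iz}-1|^2$ explicitly as $(e^b-1)^2+2e^b(1-\cos a)$ (with $z=a+ib$) and bounds the two summands separately by $b^2$ and $a^2$ using the elementary inequalities $e^b>1+b$ and $4\sin^2(a/2)\le a^2$. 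Your integral argument is shorter and more conceptual, and it generalizes immediately to other Lipschitz-type estimates; the paper's computation is more hands-on but entirely self-contained, relying only on real-variable inequalities rather than any complex integration. Either approach settles the lemma.
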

\begin{proof}
Let $z=a+ib,$ with $b=\mathfrak{Im} (z)\leq 0.$
So we have
\begin{equation}
\label{b} |e^{-iz}-1| \leq 1+ |e^{-iz}| \leq 1 + e^b \leq 2.
\end{equation}
Next we will show that
\begin{equation*}
\label{c}  |e^{-iz}-1| \leq |z|, \forall b\leq 0.
\end{equation*}
We have
\begin{eqnarray*}
  |e^{-iz}-1|^2 &=& e^{2b} -2e^b \cos a +1 \\
  &=& (e^b-1)^2 +2e^b (1-\cos a )
\end{eqnarray*}
Since $e^b > 1+b, \forall b\neq 0,$ when $b < 0,$ $b^2=(-b)^2 >(1-e^b)^2.$ Also, $2e^b (1-\cos a )\leq 2 (1-\cos a )=4\sin^2 \frac{a}{2}\leq a^2.$ So,
\begin{eqnarray*}
  |e^{-iz}-1|^2 &\leq&  b^2+a^2 =|z|^2.
\end{eqnarray*}
This proves the inequality.

\end{proof}

\begin{lemma}\label{gconverge}
Let $A$ be an expansive integral matrix with $|\det(A)|=2$, $\Omega$ be a bounded closed region in $\C^2,$ and $d_j(\vx)
\equiv m_0((A^\tau)^{-j}\vx)-1$. 
Then
\begin{equation}
|d_j (\vx)| \leq C_\Omega \|(A^\tau)^{-1}\|^{j},\forall j\in\N,\vx\in \Omega
\end{equation}
for some constant $C_\Omega>0.$
\end{lemma}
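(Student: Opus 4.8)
The plan is to exploit that $m_0$ is a finite trigonometric polynomial with $m_0(\vec 0)=1$, so that $d_j(\vx)$ is a finite sum of exponential increments whose arguments contract geometrically as $j\to\infty$. First I would subtract $m_0(\vec 0)=1$ and insert the explicit finite expansion \eqref{m0}, writing
\[ d_j(\vx) = m_0\big((A^\tau)^{-j}\vx\big) - m_0(\vec 0) = \frac{1}{\sqrt 2}\sum_{\vec n\in\Lambda_0} h_{\vec n}\big(e^{-i\,\vec n\circ (A^\tau)^{-j}\vx} - 1\big). \]
This is a genuinely finite sum since $h_{\vec n}=0$ off $\Lambda_0$, so everything reduces to estimating each increment $e^{-iz_{\vec n,j}}-1$ with $z_{\vec n,j}\equiv \vec n\circ (A^\tau)^{-j}\vx$.

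Next I would bound each increment by the elementary inequality $|e^{w}-1|\le |w|\,e^{|w|}$ (from the power series, valid for all $w\in\C$; note the paper's Lemma~\ref{eiz} only applies when $\mathfrak{Im}(w)\le 0$, which is not available here since $\vx$ is complex). For the argument, Cauchy--Schwarz for the bilinear form $\circ$ gives $|z_{\vec n,j}|\le \|\vec n\|\,\|(A^\tau)^{-j}\vx\|$, where $\|\vec n\|\le \sqrt2\,N_0$ for $\vec n\in\Lambda_0$ and $\|\vx\|\le R_\Omega\equiv\sup_{\vx\in\Omega}\|\vx\|<\infty$ by boundedness of $\Omega$. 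Thus the whole estimate hinges on controlling $\|(A^\tau)^{-j}\vx\|$ by $\|(A^\tau)^{-1}\|^{\,j}\,\|\vx\|$, up to a constant.

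The crux is exactly this last bound, and it is where care is needed: in this paper $\|\cdot\|$ denotes the spectral radius, not an operator norm, so the naive submultiplicative estimate $\|(A^\tau)^{-j}\vx\|\le\|(A^\tau)^{-1}\|^{\,j}\|\vx\|$ is \emph{not} automatic. I would obtain it from diagonalizability. An integral $A$ with $|\det A|=2$ cannot have a repeated eigenvalue: a double eigenvalue $\lambda$ would force $\lambda^2=\det A=\pm2$ together with $2\lambda=\mathrm{tr}\,A\in\Z$, which is impossible. Hence $A^\tau$ is diagonalizable over $\C$, say $(A^\tau)^{-1}=PDP^{-1}$ with $D=\mathrm{diag}(\lambda_1^{-1},\lambda_2^{-1})$, so $(A^\tau)^{-j}=PD^jP^{-1}$ and, in the Hermitian operator norm on $\C^2$,
\[ \|(A^\tau)^{-j}\vx\| \le \|P\|_{\mathrm{op}}\,\|D^j\|_{\mathrm{op}}\,\|P^{-1}\|_{\mathrm{op}}\,\|\vx\| = \kappa\,\|(A^\tau)^{-1}\|^{\,j}\,\|\vx\|, \]
where $\kappa\equiv\|P\|_{\mathrm{op}}\|P^{-1}\|_{\mathrm{op}}$ depends only on $A$ and $\|D^j\|_{\mathrm{op}}=\max(|\lambda_1|^{-j},|\lambda_2|^{-j})=\|(A^\tau)^{-1}\|^{\,j}$.

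Finally I would assemble the constant. Writing $\rho\equiv\|(A^\tau)^{-1}\|<1$ (strictly, since $A$ is expansive), for every $j\ge1$ we get $|z_{\vec n,j}|\le \sqrt2\,N_0\,\kappa\,R_\Omega\,\rho^{\,j}\le M$ with $M\equiv\sqrt2\,N_0\,\kappa\,R_\Omega$ (using $\rho^{\,j}\le1$), so $e^{|z_{\vec n,j}|}\le e^{M}$ and each increment is at most $\sqrt2\,N_0\,\kappa\,R_\Omega\,e^{M}\rho^{\,j}$. Summing over $\Lambda_0$ yields $|d_j(\vx)|\le C_\Omega\,\|(A^\tau)^{-1}\|^{\,j}$ with $C_\Omega = N_0\,\kappa\,R_\Omega\,e^{M}\sum_{\vec n\in\Lambda_0}|h_{\vec n}|$, which is the claim. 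The main obstacle is obtaining decay with the \emph{exact} exponent $\|(A^\tau)^{-1}\|^{\,j}$ (the spectral radius), which forces the diagonalization argument rather than a crude operator-norm estimate; everything else is routine bounding.
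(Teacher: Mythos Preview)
Your proof is correct and follows the same skeleton as the paper's: write $d_j$ as the finite sum $\tfrac{1}{\sqrt2}\sum_{\vec n\in\Lambda_0}h_{\vec n}\bigl(e^{-i\vec n\circ(A^\tau)^{-j}\vx}-1\bigr)$, bound each exponential increment linearly in its argument, and use that $(A^\tau)^{-j}\vx$ shrinks like $\|(A^\tau)^{-1}\|^{\,j}$ on the bounded set $\Omega$. The paper packages the increment bound via the entire function $v(z)=(e^z-1)/z$ and its continuity on a compact disk; your $|e^w-1|\le|w|e^{|w|}$ is an equivalent device.

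The one substantive difference is that you notice, and repair, a point the paper glosses over: since $\|A\|$ is \emph{defined} in the paper as the spectral radius $\max(|\lambda_1|,|\lambda_2|)$ rather than an operator norm, the inequality $|(A^\tau)^{-j}\vx|\le\|(A^\tau)^{-1}\|^{\,j}|\vx|$ is not automatic. Your observation that an integral $2\times2$ matrix with $|\det A|=2$ cannot have a repeated eigenvalue (since $2\lambda\in\Z$ and $\lambda^2=\pm2$ are incompatible), hence is diagonalizable, cleanly yields $\|(A^\tau)^{-j}\vx\|\le\kappa\,\|(A^\tau)^{-1}\|^{\,j}\|\vx\|$ with a fixed conditioning constant $\kappa$. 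The paper's proof simply writes the bound $|-i\vec n\circ(A^\tau)^{-j}\vx|\le\sqrt2\,N_0\,M_\Omega\,\|(A^\tau)^{-1}\|^{\,j}$ without this justification, so your argument is, if anything, more complete on this point. The extra constant $\kappa$ is harmless since the lemma only asks for \emph{some} $C_\Omega$.
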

\begin{proof}
 By definition of $h_{\vec{n}}$ we have
\begin{align*}
|d_j(\vx)| &= |m_0((A^\tau)^{-j}\vx)-1|\\
           &= \left|\frac{1}{\sqrt{2}}\sum_{\vec{n}\in\Z^2} h_{\vec{n}} e^{-i\vec{n}\circ (A^\tau)^{-j}\vx}-1 \right| \\
           &= \left|\frac{1}{\sqrt{2}}\sum_{\vec{n}\in\Lambda_0} h_{\vec{n}} (e^{-i\vec{n}\circ (A^\tau)^{-j}\vx}-1) \right| \\
           &= \left|\frac{1}{\sqrt{2}}\sum_{\vec{n}\in\Lambda_0} h_{\vec{n}} v(-i\vec{n}\circ(A^\tau)^{-j}\vx)[-i\vec{n}\circ (A^\tau)^{-j}\vx]\right|  \\
           &\leq \frac{1}{\sqrt{2}}\sum_{\vec{n}\in\Lambda_0} |v(-i\vec{n}\circ(A^\tau)^{-j}\vx)|
           \cdot |-i\vec{n}\circ (A^\tau)^{-j}\vx|,
\end{align*}
since $|h_n| \leq 1$ by Remark \ref{less1}.

For $\vec{n}\in\Lambda_0, \vx\in\Omega,$ we have  $|\vx|\leq M_\Omega$ for some $M_\Omega>0,$ and
\begin{align*}
|-i\vec{n}\circ (A^\tau)^{-j}\vx|
&\leq \sqrt{2} N_0 \cdot M_\Omega \cdot \|(A^\tau)^{-1}\|^{j}
= C_1  \|(A^\tau)^{-1}\|^{j} \leq C_1.
\end{align*}
where $C_1\equiv \sqrt{2} N_0 M_\Omega.$
Let $C_2$ be the finite least upper bound for continuous function $|v(z)|, |z|\leq C_1.$

So, we have
\begin{equation*}
|d_j(\vx)| \leq \frac{1}{\sqrt{2}}
           (2N_0+1)^2 C_1C_2  \|(A^\tau)^{-1}\|^{j}.
\end{equation*}
Therefore
\begin{equation}
|d_j(\vx)| \leq C_\Omega \|(A^\tau)^{-1}\|^{j}
\end{equation}
where $C_\Omega \equiv\frac{1}{\sqrt{2}}
           (2N_0+1)^2 C_1C_2$.

\end{proof}

\begin{proposition}\label{entire}
The function $g(\vx)$  is an entire function on $\C^2$.
\end{proposition}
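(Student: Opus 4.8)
The plan is to realize $2\pi\,g$ as an infinite product of entire functions and to invoke the Weierstrass theory of infinite products: if the factors have the form $1+d_j$ with the $d_j$ holomorphic and $\sum_j|d_j|$ converging uniformly on compact sets, then the product converges uniformly on compact sets to a holomorphic function. Each factor here is $m_0((A^\tau)^{-j}\vx)$, which by \eqref{m0} is a finite trigonometric polynomial in the complex variable $\vx\in\C^2$, hence an entire function of $\vx$; and $d_j(\vx)=m_0((A^\tau)^{-j}\vx)-1$ is exactly the quantity controlled by Lemma \ref{gconverge}. So the whole argument reduces to extracting from Lemma \ref{gconverge} the uniform summability of the $d_j$, and then running the standard product-convergence estimate together with a holomorphy-of-the-limit argument.

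First I would fix an arbitrary bounded closed region $\Omega\subset\C^2$ and set $r\equiv\|(A^\tau)^{-1}\|$. Since $A$ is expansive, the eigenvalues of $A$ --- equivalently of $A^\tau$ --- all have modulus strictly greater than $1$, so those of $(A^\tau)^{-1}$ have modulus strictly less than $1$ and $r<1$. Lemma \ref{gconverge} then gives $\sup_{\vx\in\Omega}|d_j(\vx)|\le C_\Omega\, r^{j}$, whence
\[
\sum_{j=1}^{\infty}\sup_{\vx\in\Omega}|d_j(\vx)|\le C_\Omega\sum_{j=1}^{\infty} r^{j}=\frac{C_\Omega\, r}{1-r}<\infty .
\]
Thus $\sum_j d_j$ converges absolutely and uniformly on $\Omega$, which is the hypothesis needed for the infinite product.

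Next I would control the partial products $P_N(\vx)\equiv\prod_{j=1}^{N} m_0((A^\tau)^{-j}\vx)=\prod_{j=1}^{N}\bigl(1+d_j(\vx)\bigr)$. Using $1+x\le e^{x}$ one gets the uniform bound $|P_N(\vx)|\le\prod_{j=1}^{N}(1+|d_j(\vx)|)\le\exp\!\bigl(\sum_{j\ge1}\sup_\Omega|d_j|\bigr)\equiv B_\Omega$, valid for all $N$ and all $\vx\in\Omega$. For $N>M$ I would factor $P_N-P_M=P_M\bigl(\prod_{j=M+1}^{N}(1+d_j)-1\bigr)$ and bound the second factor by $\exp\!\bigl(\sum_{j>M}\sup_\Omega|d_j|\bigr)-1$, which tends to $0$ as $M\to\infty$ because the tail of a convergent series vanishes. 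Hence $(P_N)$ is uniformly Cauchy on $\Omega$ and converges uniformly to a continuous limit; after multiplying by $1/2\pi$ this limit is $g$. Finally, $g$ is a locally uniform limit of the entire functions $\frac1{2\pi}P_N$, so it is holomorphic in each complex coordinate and jointly continuous, and therefore holomorphic on the interior of $\Omega$ (Osgood's lemma, i.e.\ the Cauchy integral formula applied coordinatewise). Since $\Omega$ was an arbitrary bounded closed region, $g$ is entire on $\C^2$.

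The only genuinely delicate point is the passage, in two complex variables, from absolute summability of the $d_j$ to holomorphy of the limiting product; once the geometric majorant of Lemma \ref{gconverge} is in hand this is the classical Weierstrass/Osgood argument, and the remaining estimates are routine. I would phrase the holomorphy conclusion so as to avoid any appeal to the non-vanishing of $g$, allowing the product (and hence $g$) to have zeros without affecting the argument.
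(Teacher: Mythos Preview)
Your proposal is correct and follows essentially the same route as the paper: both arguments write the partial products as $\prod_{j=1}^{J}(1+d_j)$, invoke Lemma~\ref{gconverge} together with $\|(A^\tau)^{-1}\|<1$ to obtain uniform convergence on bounded closed regions, and then pass to an entire limit via a standard uniform-limit-of-holomorphic-functions argument (you cite Osgood's lemma, the paper cites Morera). Your write-up is in fact more careful about the Cauchy estimate for the tails and about the several-variables holomorphy step, but the underlying strategy is identical.
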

\begin{remark}\label{gbd}
By \eqref{m0eq} and definition of $g$, it is clear that the function $g$ is bounded on $\R^2.$
\end{remark}
\begin{proof}
For $J\in\N,$ define
\begin{equation}
g_J (\vx) = \frac{1}{2\pi}\prod_{j=1}^{J}m_0((A^\tau)^{-j}\vx),  \forall \vx\in\C^2.
\end{equation}
It is clear that $g_J$ is an entire function.
We have

\begin{align*}
g_J (\vx)  & = \frac{1}{2\pi}\prod_{j=1}^{J}m_0((A^\tau)^{-j}\vx)\\
           & = \frac{1}{2\pi}\prod_{j=1}^{J}(1+d_j(\vx))
\end{align*}

By Lemma \ref{gconverge}, $\sum |d_j (\vx)|$ converges uniformly on bounded region $\Omega,$
the product $\prod_{j=0}^\infty (1+|d_j(\vx)|)$ converges uniformly on $\Omega.$
This implies that   $g$ is the uniform limit of a sequence of entire functions $g_J.$ By Morera Theorem
$g$ is an entire function on $\C^2.$
\end{proof}

\begin{proposition}\label{l2phi}
The functions $g$ and $\varphi$ are in $L^2(\R^2).$
\end{proposition}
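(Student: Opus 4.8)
The plan is to recover $g$ as the pointwise limit of the truncated products $g_J$ introduced in the proof of Proposition \ref{entire}, to show that the $L^2$-mass of each $g_J$ over the expanding region $(A^\tau)^J\Gamma_\pi$ is a quantity independent of $J$, and then to conclude with Fatou's lemma. To set this up, write $P_J(\vec u)\equiv\prod_{k=0}^{J-1}|m_0((A^\tau)^k\vec u)|^2$ and $G_J\equiv\int_{(A^\tau)^J\Gamma_\pi}|g_J(\vx)|^2\,d\mu$. The substitution $\vx=(A^\tau)^J\vec u$, together with $|\det(A^\tau)|=2$, converts this into $G_J=\frac{2^J}{4\pi^2}\int_{\Gamma_\pi}P_J\,d\mu$. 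Because $A^\tau$ is integral and $m_0$ is $2\pi$-periodic, each $P_J$ is a continuous $2\pi$-periodic function, and the recursion $P_J(\vec u)=|m_0(\vec u)|^2\,P_{J-1}(A^\tau\vec u)$ holds.

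The central step is the identity $G_J=G_{J-1}$, equivalently $2\int_{\Gamma_\pi}P_J\,d\mu=\int_{\Gamma_\pi}P_{J-1}\,d\mu$. Since $\Gamma_\pi=[-\pi,\pi]^2$ and its translate $\Gamma_\pi+\pi\vec q_A$ are both fundamental domains for $(2\pi\Z)^2$, the integral of any $2\pi$-periodic function over the two agrees, so $2\int_{\Gamma_\pi}P_J\,d\mu=\int_{\Gamma_\pi}\bigl[P_J(\vec u)+P_J(\vec u+\pi\vec q_A)\bigr]\,d\mu$. Expanding each summand through the recursion produces the factors $|m_0(\vec u)|^2$ and $|m_0(\vec u+\pi\vec q_A)|^2$, multiplied respectively by $P_{J-1}(A^\tau\vec u)$ and $P_{J-1}(A^\tau\vec u+\pi A^\tau\vec q_A)$. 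Property (\ref{2z}) of Proposition \ref{properties}, namely $A^\tau\vec q_A\in(2\Z)^2$, makes $\pi A^\tau\vec q_A$ a period of $P_{J-1}$, so the two trailing factors coincide and equal $P_{J-1}(A^\tau\vec u)$; the identity \eqref{m0eq} of Proposition \ref{filter} then collapses $|m_0(\vec u)|^2+|m_0(\vec u+\pi\vec q_A)|^2$ to $1$. Hence $2\int_{\Gamma_\pi}P_J\,d\mu=\int_{\Gamma_\pi}P_{J-1}(A^\tau\vec u)\,d\mu$, and a change of variables followed by Corollary \ref{click} applied to the $2\pi$-periodic function $P_{J-1}$ evaluates the right-hand side as $\int_{\Gamma_\pi}P_{J-1}\,d\mu$. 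Since $P_0\equiv1$ gives $G_0=1$, induction yields $G_J=1$ for every $J$.

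To finish, note that $A^\tau$ is expansive, so $(A^\tau)^{-J}\vx\to\vec 0$ for each fixed $\vx$; thus $\vx$ eventually lies in $(A^\tau)^J\Gamma_\pi$ and $\chi_{(A^\tau)^J\Gamma_\pi}\to1$ pointwise. Combined with the convergence $g_J\to g$ from Proposition \ref{entire}, the integrands $|g_J|^2\chi_{(A^\tau)^J\Gamma_\pi}$ converge to $|g|^2$ almost everywhere, and Fatou's lemma gives $\int_{\R^2}|g|^2\,d\mu\le\liminf_J G_J=1$, so $g\in L^2(\R^2)$. As $\F$ is a unitary operator on $L^2(\R^2)$, the function $\varphi=\F^{-1}g$ also lies in $L^2(\R^2)$, with $\|\varphi\|_2=\|g\|_2\le1$.

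I expect the main obstacle to be the bookkeeping in the central step. The subtlety is that the shift activating \eqref{m0eq} must be performed on the outermost factor $|m_0(\vec u)|^2$: after pushing the translation $\pi\vec q_A$ through $A^\tau$ inside $P_{J-1}(A^\tau\vec u)$, the relevant vector is $\pi A^\tau\vec q_A$, and it is precisely property (\ref{2z}), $A^\tau\vec q_A\in(2\Z)^2$, that makes this an even lattice shift leaving $P_{J-1}$ unchanged; the analogous pairing at a different factor fails because $(A^\tau)^{-1}\vec q_A$ need not lie in $(2\Z)^2$. The remaining points are measure-theoretic routine: justifying the interchange among fundamental domains and verifying the pointwise hypothesis needed for Fatou's lemma.
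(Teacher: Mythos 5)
Your proof is correct and follows essentially the same route as the paper's: both show that the integrals of the truncated products over the expanding regions $(A^\tau)^J\Gamma_\pi$ form a constant sequence, using the filter identity \eqref{m0eq}, Corollary \ref{click}, and property (\ref{2z}) ($A^\tau\vec{q}_A\in(2\Z)^2$), and then conclude via Fatou's lemma and the unitarity of $\F$. The only differences are bookkeeping ones (your pulled-back products $P_J$ versus the paper's $M_J$, and your explicit fundamental-domain phrasing of the shift $\Gamma_\pi\mapsto\Gamma_\pi+\pi\vec{q}_A$, which the paper carries out by periodicity and substitution).
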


\begin{proof}
Will use $\Gamma_\pi$ to denote $[-\pi,\pi]^2.$
For $J\in\N,$ we define on $\R^2$
\begin{equation*}M_J(\vx)=\left\{
\begin{array}{cl}
\prod_{j=1}^{J}|m_0((A^\tau)^{-j}\vx)|^2 & \text{if $\vx\in(A^\tau)^{J+1}\Gamma_\pi$,}\\
0 & \text{if $\vx \in \R^2 \backslash (A^\tau)^{J+1}\Gamma_\pi$}.
\end{array}\right.
\end{equation*}
Since $A$ is expansive, $A^\tau$ is expansive. We have $\lim M_J(\vx) = 4\pi^2|g(\vx)|^2, \vx\in\R^2. $ To prove the Proposition, by Fatou's Lemma it suffices to show  that $\{ \int_{\R^2}M_J(\vx)d\vx, J\in\N\}$ is a bounded sequence.

We have
\begin{eqnarray*}
\int_{\R^2}M_J(\vx)d\vx
&=& \int_{(A^\tau)^{J+1}\Gamma_\pi}\prod_{k=1}^{J}|m_0((A^\tau)^{-k}\vx)|^2 d\vx\\
&=& \int_{(A^\tau)^{J}(A^\tau \Gamma_\pi)}|m_0((A^\tau)^{-J}\vx)|^2 \cdot \prod_{k=1}^{J-1}|m_0((A^\tau)^{-k}\vx)|^2 d\vx
\end{eqnarray*}
Use $\vec{\eta}\equiv (A^\tau)^{-J} \vx,$ by Proposition \ref{Ataugamma} we have
\begin{eqnarray*}
\int_{\R^2}M_J(\vx)d\vx
&=& |\det((A^\tau)^{J})| \int_{A^\tau\Gamma_\pi }|m_0(\vec{\eta})|^2 \cdot \prod_{m=1}^{J-1}|m_0((A^\tau)^{m}\eta)|^2  d\vec{\eta}\\
&=&|\det((A^\tau)^{J})| \Big( \int_{\Gamma_\pi }|m_0(\vec{\eta})|^2 \cdot \prod_{m=1}^{J-1}|m_0((A^\tau)^{m}\eta)|^2  d\vec{\eta}\\
&& + \int_{\Gamma_\pi +\pi \vec{q}_A}|m_0(\vec{\eta})|^2 \cdot \prod_{m=1}^{J-1}|m_0((A^\tau)^{m}\eta)|^2  d\vec{\eta} \Big) \\
&=& |\det((A^\tau)^{J})| \Big( \int_{\Gamma_\pi }|m_0(\vec{\eta})|^2 \cdot \prod_{m=1}^{J-1}|m_0((A^\tau)^{m}\eta)|^2  d\vec{\eta} \\
&& +  \int_{\Gamma_\pi}|m_0(\vec{\eta} -\pi \vec{q}_A)|^2 \cdot \prod_{m=1}^{J-1}|m_0((A^\tau)^{m}\eta -\pi (A^\tau)^{m} \vec{q}_A)|^2  d\vec{\eta} \Big)
\end{eqnarray*}
Since $m_0$ is $2\pi$-periodical, and by Proposition \ref{properties}  equation (\ref{2z}) we have $A^\tau \vec{q}_A \in (2\Z)^2,$ so $\pi (A^\tau)^{m} \vec{q}\in\pi (2\Z)^2$. By Corollary \ref{click}  we have
\begin{eqnarray*}
\int_{\R^2}M_J(\vx)d\vx
&=& |\det((A^\tau)^{J})| \Big(   \int_{\Gamma_\pi }|m_0(\vec{\eta})|^2 \cdot \prod_{m=1}^{J-1}|m_0((A^\tau)^{m}\eta)|^2  d\vec{\eta}\\
&& + \int_{\Gamma_\pi}|m_0(\vec{\eta} +\pi \vec{q}_A)|^2 \cdot \prod_{m=1}^{J-1}|m_0((A^\tau)^{m}\eta)|^2  d\vec{\eta} \Big)\\
&=& |\det((A^\tau)^{J})|
 \int_{\Gamma_\pi }(|m_0(\vec{\eta})|^2+|m_0(\vec{\eta}+\pi \vec{q}_A)|^2 )\cdot \prod_{m=1}^{J-1}|m_0((A^\tau)^{m}\eta)|^2  d\vec{\eta}.
\end{eqnarray*}
By equation (\ref{m0eq}) and then using substitution  $\vx \equiv (A^\tau)^J \vec{\eta},$
we obtain
\begin{eqnarray*}
\int_{\R^2}M_J(\vx)d\vx
&=& |\det((A^\tau)^{J})|
 \int_{\Gamma_\pi } \prod_{m=1}^{J-1}|m_0((A^\tau)^{m}\eta)|^2  d\vec{\eta}\\
 &=& \int_{(A^\tau)^{J}\Gamma_\pi}\prod_{k=1}^{J-1}|m_0((A^\tau)^{-k}\vx)|^2 d\vx\\
&=& \int_{\R^2}M_{J-1}(\vx)d\vx.
\end{eqnarray*}
This proves that the sequence $ \int_{\R^2}M_J(\vx)d\vx, J\in\N$ is a constant sequence.
Therefore, $g$ is square integrable on $\R^2.$
By Plancherel Theorem the function $\varphi$ which is the Fourier inverse transform of $g$,   is in $L^2(\R^2)$.

\end{proof}

Next, we will prove that the scaling function $\varphi$
has a compact support in $\R^2$. We will need the following Schwartz's Paley-Wiener Theorem.

\begin{theorem}(Schwartz's Paley-Wiener Theorem)
An entire function $F$ on $\C^d,d\in\N,$ is the Fourier Transform of a distribution with compact support in $\R^d$ if and only if
there are some constants $C,N$ and $B$, such that
\begin{equation}\label{ineqcpt}
|F(\vx)| \leq C(1+|\vx|)^N e^{B |\mathfrak{Im} (\vx)|}, ~ \forall \vx \in\C^d
\end{equation}
The distribution is supported on the closed ball of center $\vec{0}$ and radius $B$.
\end{theorem}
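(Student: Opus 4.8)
The plan is to establish the two implications of this classical equivalence separately, following Schwartz's Fourier--Laplace transform argument in $\C^d$.

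First I would prove necessity, that a distribution $u$ supported in the closed ball $\{\vt\in\R^d:|\vt|\le B\}$ has an entire Fourier transform obeying \eqref{ineqcpt}. Since a distribution with compact support has finite order, there exist an integer $N$ and a constant $C$ with $|\langle u,\phi\rangle|\le C\sum_{|\alpha|\le N}\sup|\partial^\alpha\phi|$ for every test function $\phi$. I would then define the Fourier--Laplace transform $F(\vx)=\frac{1}{2\pi}\langle u,\,e^{-i\vx\circ\vt}\rangle$, with $u$ acting in the $\vt$ variable. Holomorphy of $F$ on $\C^d$ follows because $\vx\mapsto e^{-i\vx\circ\vt}$ is entire and one may differentiate under the continuous distribution pairing, verifying the Cauchy--Riemann equations in each complex coordinate. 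For the estimate I would insert a fixed cutoff $\chi\in C^\infty_c$ equal to $1$ on a neighbourhood of the support and supported in $\{|\vt|\le B+\e\}$, write $F(\vx)=\frac{1}{2\pi}\langle u,\chi\,e^{-i\vx\circ\vt}\rangle$, and apply the order-$N$ bound. Each $\vt$-derivative of $e^{-i\vx\circ\vt}$ produces the factor $(-i\vx)^\alpha$, so the derivatives supply the polynomial factor $(1+|\vx|)^N$, while $|e^{-i\vx\circ\vt}|=e^{\vt\circ\mathfrak{Im}(\vx)}\le e^{(B+\e)|\mathfrak{Im}(\vx)|}$ on the support supplies the exponential factor; letting $\e\to0$ recovers the sharp constant $B$.

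Next I would prove sufficiency, which is the direction actually needed to conclude that $\varphi=\F^{-1}g$ has compact support (here $g=\widehat{\varphi}$ is entire by Proposition \ref{entire} and is the $F$ of the theorem). Restricting \eqref{ineqcpt} to real $\vx$ shows $F|_{\R^d}$ grows at most polynomially, hence is a tempered distribution; set $u\equiv\F^{-1}(F|_{\R^d})$. The task is to show $u$ is supported in $\{|\vt|\le B\}$. I would fix a test function $\phi$ whose support is disjoint from that ball and express $\langle u,\phi\rangle$ through the inverse Fourier integral of $F$ paired against $\phi$. Using holomorphy of $F$ together with \eqref{ineqcpt}, I would deform the real contour $\R^d$ into a shifted contour $\R^d+i\tau$ by Cauchy's theorem in several variables; the factor $e^{B|\tau|}$ coming from \eqref{ineqcpt} competes against the decay of the Fourier transform of $\phi$ in the complex directions, which is super-exponential because $\mathrm{supp}\,\phi$ is separated from the ball. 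Choosing $|\tau|$ large in the direction realizing that separation forces $\langle u,\phi\rangle\to0$, so $u$ vanishes outside the radius-$B$ ball.

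I expect the sufficiency direction to be the main obstacle. The delicate points are justifying the contour deformation into $\C^d$ rigorously, namely obtaining uniform control of the integrands at infinity so that Cauchy's theorem applies in each complex variable, and then balancing the two competing exponentials to drive the pairing to zero for \emph{every} $\phi$ supported outside the ball of radius $B$. Making all the pairings precise in the purely distributional setting, rather than for $L^2$ or Schwartz functions, is the technical heart of the argument, whereas the holomorphy and the growth estimate in the necessity direction are comparatively routine consequences of differentiation under the pairing and the finite-order bound.
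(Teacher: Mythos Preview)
The paper does not prove this theorem: it is quoted as a classical result (H\"ormander's text \cite{hormander} is in the bibliography) and is immediately applied in Proposition~\ref{compactsupport} once the growth estimate \eqref{ineqcpt} has been verified for $g$. So there is no proof in the paper to compare your proposal against.

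That said, your sketch is essentially the standard Paley--Wiener--Schwartz argument and is correct in outline. One imprecision deserves flagging in the sufficiency direction: you assert that $\widehat{\phi}$ has ``super-exponential'' decay in the complex directions because $\mathrm{supp}\,\phi$ is separated from the ball. That is not what happens. A compactly supported test function has Fourier--Laplace transform of \emph{exponential} type, not super-exponential decay; what makes the contour shift work is that if $\mathrm{supp}\,\phi$ lies in a half-space $\{\vt:\vt\circ\omega>B+\delta\}$ for some unit vector $\omega$ and some $\delta>0$, then along $\vx=\xi+i\tau\omega$ with $\tau>0$ the transform of $\phi$ contributes a factor $e^{-(B+\delta)\tau}$ together with rapid polynomial decay in $\xi$ (any power, because $\phi$ is smooth). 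The exponential rate $B+\delta$ strictly beats the rate $B$ in \eqref{ineqcpt}, so the shifted integral tends to $0$ as $\tau\to+\infty$; the polynomial decay in $\xi$ is what justifies the contour shift at infinity. A partition of unity reduces a general $\phi$ supported outside the closed $B$-ball to finitely many such half-space pieces. With this correction your argument goes through.
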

\begin{remark}
In our current situation, $d=2$ and the distribution $\varphi$ is a regular $L^2(\R^2)$ function as we proved in Proposition \ref{l2phi}.
\end{remark}

\begin{lemma}\label{ineq}
There exist constants $B_0, C_0$ such that for all $ j\in\N,  \vx \in \C^2$,
\begin{equation*}
    m_0 \big( (A^\tau)^{-j} \vx \big) \leq
    e^{B_0 \|(A^\tau)^{-1}\|^j |\mathfrak{Im}(\vx)| }
\big( 1+  C_0
\min (1, \|(A^\tau)^{-1}\| ^j | \vx|)\big).
\end{equation*}

\end{lemma}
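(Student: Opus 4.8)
The plan is to read the left-hand side as $|m_0((A^\tau)^{-j}\vx)|$ (the right-hand side is a positive real while $m_0$ is complex-valued) and to refine the estimate behind Lemma \ref{gconverge} so that it keeps track of the imaginary part of $\vx$. First I would use the normalization $\frac1{\sqrt2}\sum_{\vec n\in\Lambda_0}h_{\vec n}=1$ from \eqref{lawtoneq} (equivalently, $m_0$ takes the value $1$ at the origin) to write $m_0((A^\tau)^{-j}\vx)=1+d_j(\vx)$ with $d_j(\vx)=\frac1{\sqrt2}\sum_{\vec n\in\Lambda_0}h_{\vec n}\bigl(e^{-i\vec n\circ(A^\tau)^{-j}\vx}-1\bigr)$, exactly as in Lemma \ref{gconverge}. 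Then $|m_0((A^\tau)^{-j}\vx)|\le 1+|d_j(\vx)|$, and the whole problem reduces to estimating each exponential difference $|e^{-iz_{\vec n}}-1|$ with $z_{\vec n}\equiv\vec n\circ(A^\tau)^{-j}\vx$.

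The key step, and the one I expect to be the main obstacle, is that Lemma \ref{eiz} only applies when $\mathfrak{Im}(z)\le 0$, whereas $z_{\vec n}$ may have imaginary part of either sign. I would therefore first upgrade Lemma \ref{eiz} to the two-sided estimate $|e^{-iz}-1|\le e^{|\mathfrak{Im}(z)|}\min(2,|z|)$, valid for every $z\in\C$. When $\mathfrak{Im}(z)\le 0$ this is immediate from Lemma \ref{eiz} since $e^{|\mathfrak{Im}(z)|}\ge 1$. When $\mathfrak{Im}(z)>0$ I would factor $e^{-iz}-1=e^{-iz}(1-e^{iz})$, so that $|e^{-iz}-1|=e^{\mathfrak{Im}(z)}\,|e^{iz}-1|=e^{\mathfrak{Im}(z)}\,|e^{-i(-z)}-1|$, and apply Lemma \ref{eiz} to $-z$, whose imaginary part is now $\le 0$, to obtain $|e^{-i(-z)}-1|\le\min(2,|-z|)=\min(2,|z|)$. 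This is the only genuinely new ingredient; the rest is bookkeeping.

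With this in hand, plugging $z=z_{\vec n}$ I would bound $|\mathfrak{Im}(z_{\vec n})|=|\vec n\circ(A^\tau)^{-j}\mathfrak{Im}(\vx)|\le\sqrt2\,N_0\,\|(A^\tau)^{-1}\|^{j}\,|\mathfrak{Im}(\vx)|$ and $|z_{\vec n}|\le\sqrt2\,N_0\,\|(A^\tau)^{-1}\|^{j}\,|\vx|$, using $|\vec n|\le\sqrt2\,N_0$ for $\vec n\in\Lambda_0$ together with the submultiplicativity $\|(A^\tau)^{-j}\|\le\|(A^\tau)^{-1}\|^{j}$ and $|M\vx|\le\|M\|\,|\vx|$, exactly as in the proof of Lemma \ref{gconverge}. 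To reach the stated normalization I would convert $\min(2,\cdot)$ into $\min(1,\|(A^\tau)^{-1}\|^j|\vx|)$ via the elementary inequalities $\min(2,t)\le 2\min(1,t)$ and $\min(1,ct)\le c\min(1,t)$ for $c\ge1$, absorbing the factor $c=\sqrt2\,N_0$ (assuming $N_0\ge1$; the case $N_0=0$ is trivial since then $m_0\equiv1$).

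Finally I would sum over the $(2N_0+1)^2$ points of $\Lambda_0$ and use $|h_{\vec n}|\le1$ from Remark \ref{less1} to get $|d_j(\vx)|\le C_0\,e^{B_0\|(A^\tau)^{-1}\|^j|\mathfrak{Im}(\vx)|}\min(1,\|(A^\tau)^{-1}\|^j|\vx|)$ with, say, $B_0=\sqrt2\,N_0$ and $C_0=2N_0(2N_0+1)^2$. Since $e^{B_0\|(A^\tau)^{-1}\|^j|\mathfrak{Im}(\vx)|}\ge1$, the leading term $1$ in $1+|d_j(\vx)|$ can be absorbed into the exponential prefactor, yielding $|m_0((A^\tau)^{-j}\vx)|\le e^{B_0\|(A^\tau)^{-1}\|^j|\mathfrak{Im}(\vx)|}\bigl(1+C_0\min(1,\|(A^\tau)^{-1}\|^j|\vx|)\bigr)$, which is the claim. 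I expect the entire argument to be routine once the two-sided version of Lemma \ref{eiz} is in place.
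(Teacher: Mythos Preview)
Your argument is correct and complete; the two-sided upgrade of Lemma \ref{eiz} via the factorization $e^{-iz}-1=e^{-iz}(1-e^{iz})$ is valid, and the remaining bookkeeping (the $\min$ manipulations and the absorption of the leading $1$ into the exponential prefactor) goes through as you describe.

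The paper's proof follows a closely related but organizationally different route. Instead of upgrading Lemma \ref{eiz}, it introduces an auxiliary integer vector $\vec{\ell}_{\vx}\in\{-N_0,N_0\}^2$, chosen componentwise according to the signs of the imaginary parts of $(A^\tau)^{-j}\vx$, and factors the whole sum as
\[
m_0\big((A^\tau)^{-j}\vx\big)=e^{-i\vec{\ell}_{\vx}\circ(A^\tau)^{-j}\vx}\sum_{\vec n\in\Lambda_0}\tfrac{1}{\sqrt2}h_{\vec n}\,e^{-i(\vec n-\vec{\ell}_{\vx})\circ(A^\tau)^{-j}\vx}.
\]
After this shift every exponent $(\vec n-\vec{\ell}_{\vx})\circ(A^\tau)^{-j}\vx$ has nonpositive imaginary part, so the one-sided Lemma \ref{eiz} applies directly; the single prefactor supplies the $e^{B_0\|(A^\tau)^{-1}\|^j|\mathfrak{Im}(\vx)|}$. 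In effect you perform the same ``pull out the growing modulus'' trick, but term by term inside the proof of the two-sided inequality rather than once at the level of $m_0$. Your version is a bit cleaner and even gives a smaller $B_0$ (your $\sqrt2\,N_0$ versus the paper's $4\sqrt2\,N_0$, since $|\vec n|\le\sqrt2\,N_0$ while $|\vec{\ell}_{\vx}|=\sqrt2\,N_0$ forces the paper to carry $|\vec n-\vec{\ell}_{\vx}|\le 2\sqrt2\,N_0$); the paper's version makes the origin of the exponential prefactor more visibly a single phase factor.
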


\begin{proof}
Let $j\in\N, \vx\in \C^2$ and $(A^\tau)^{-j}\vx = \left(\begin{array}{c} \xi_1 \\  \xi_2  \end{array}\right)\in\C^2.$  Define $ \vec{\ell}_{\vx} = \left(\begin{array}{c} \ell_1 \\  \ell_2  \end{array}\right)\in\Z^2$ by
\begin{equation*}
\ell_m =
\left\{
\begin{array}{rl}
-N_0,        & \text{if } \mathfrak{Im}(\xi_m ) \leq 0; \\
N_0,    & \text{if } \mathfrak{Im} (\xi_m ) > 0.
\end{array}
\right.  m=1,2.
\end{equation*}
then
$\mathfrak{Im}\Big((\vec{n}-\vec{\ell}_{\vx}) \circ \big((A^\tau)^{-j} \vx\big)\Big)\leq 0$
for $\vec{n}\in\Lambda_0.$ We have $|(\vec{n} - \vec{\ell}_{\vx})| \leq  2\sqrt{2}N_0, \forall \vec{n}\in\Lambda_0.$ We denote $B_0 \equiv 4\sqrt{2}N_0$.  It is clear that $|\vec{\ell}_\xi| \leq \frac{B_0}{2}$ and $|(\vec{n} - \vec{\ell}_{\vx})| \leq \frac{B_0}{2}.$
By Lemma \ref{eiz} we have
\begin{eqnarray*}
 |e^{-i(\vec{n}-\vec{\ell}_{\vx}) \circ \big((A^\tau)^{-j} \vx\big)} -1|
 & \leq &
 \min (2,  |(\vec{n}-\vec{\ell}_{\vx}) \circ \big((A^\tau)^{-j} \vx\big)|),\forall \vec{n}\in\Lambda_0
\end{eqnarray*}
So, we have
\begin{equation}\label{X}
|e^{-i(\vec{n}-\vec{\ell}_{\vx}) \circ \big((A^\tau)^{-j} \vx\big)} -1|
\leq \min \big(2, B_0 \|(A^\tau)^{-1}\| ^j | \vx|\big) ,\forall \vec{n}\in\Lambda_0.\\
\end{equation}
We also have
\begin{eqnarray*}
 |e^{-i \vec{\ell}_{\vx} \circ \big((A^\tau)^{-j} \vx\big)}|
 &=& e^{\vec{\ell}_{\vx} \circ \big((A^\tau)^{-j} \mathfrak{Im}(\vx) \big)}\\
 &\leq& e^{|\vec{\ell}_{\vx}|  \|(A^\tau)^{-1}\|^j |\mathfrak{Im}(\vx)| }.
 \end{eqnarray*}
This implies
\begin{equation}\label{Y}
 |e^{-i \vec{\ell}_{\vx} \circ \big((A^\tau)^{-j} \vx\big)}|
 \leq e^{B_0  \|(A^\tau)^{-1}\|^j |\mathfrak{Im}(\vx)| }.
\end{equation}
Since
\begin{eqnarray*}
m_0 \big( (A^\tau)^{-j} \vx \big)
            & =&\sum_{\vec{n}\in\Z^2}\frac{1}{\sqrt{2}}h_{\vec{n}}
e^{-i\vec{n}\circ \big((A^\tau)^{-j} \vx\big)} \\
            & =& e^{-i\vec{\ell}_{\vx} \circ \big((A^\tau)^{-j} \vx\big)}
             \sum_{\vec{n}\in\Lambda_0}\frac{1}{\sqrt{2}}h_{\vec{n}}
e^{-i(\vec{n}-\vec{\ell}_{\vx}) \circ \big((A^\tau)^{-j} \vx\big)} \\
            & =& e^{-i\vec{\ell}_{\vx} \circ \big((A^\tau)^{-j} \vx\big)}
          \Big( 1+  \sum_{\vec{n}\in\Lambda_0}\frac{1}{\sqrt{2}}h_{\vec{n}}
\big(e^{-i(\vec{n}-\vec{\ell}_{\vx}) \circ \big((A^\tau)^{-j} \vx\big)} -1\big)\Big).
\end{eqnarray*}
By (\ref{X}) and (\ref{Y}) we obtain
\begin{eqnarray*}
|m_0 \big( (A^\tau)^{-j} \vx \big)|
&\leq& |e^{ -i\vec{\ell}_{\vx} \circ \big((A^\tau)^{-j} \vx\big)}|
          \big( 1+  \sum_{\vec{n}\in\Lambda_0}\frac{1}{\sqrt{2}}|h_{\vec{n}}|\cdot
|(e^{-i(\vec{n}-\vec{\ell}_{\vx}) \circ \big((A^\tau)^{-j} \vx\big)} -1)\big)|\\
&\leq& e^{B_0 \|(A^\tau)^{-1}\|^j |\mathfrak{Im}(\vx)| }
\big( 1+  \frac{1}{\sqrt{2}}(2N_0+1)^2
\min (2, B_0 \|(A^\tau)^{-1}\| ^j | \vx|)\big)\\
&\leq& e^{B_0 \|(A^\tau)^{-1}\|^j |\mathfrak{Im}(\vx)| }
\big( 1+  C_0
\min (1, \|(A^\tau)^{-1}\| ^j | \vx|)\big).
\end{eqnarray*}
where
$C_0 \equiv \max (\sqrt{2}(2N_0+1)^2, \frac{B_0}{\sqrt{2}}(2N_0+1)^2 ).$

\end{proof}

\begin{proposition}\label{compactsupport}
The scaling function $\varphi$ is an $L^2 (\R^2)$ function with compact support.
\end{proposition}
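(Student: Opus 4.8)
The plan is to verify the hypothesis of Schwartz's Paley--Wiener Theorem for the entire function $g$. Since $\varphi = \F^{-1} g$ is already known to lie in $L^2(\R^2)$ by Proposition \ref{l2phi}, and $g$ is entire on $\C^2$ by Proposition \ref{entire}, it suffices to produce constants $C,N,B$ with
\begin{equation*}
|g(\vx)| \leq C(1+|\vx|)^N e^{B|\mathfrak{Im}(\vx)|}, \quad \forall \vx \in \C^2 .
\end{equation*}
The theorem then identifies $g$ as the Fourier transform of a distribution supported in the closed ball of radius $B$, and because that distribution is the genuine $L^2(\R^2)$ function $\varphi$, this exhibits $\varphi$ as compactly supported.

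First I would estimate the infinite product defining $g$ factor by factor using Lemma \ref{ineq}. Writing $r \equiv \|(A^\tau)^{-1}\| < 1$ (which holds because $A^\tau$ is expansive), Lemma \ref{ineq} gives
\begin{equation*}
|g(\vx)| = \frac{1}{2\pi}\prod_{j=1}^\infty |m_0((A^\tau)^{-j}\vx)| \leq \frac{1}{2\pi}\prod_{j=1}^\infty e^{B_0 r^j |\mathfrak{Im}(\vx)|}\big(1 + C_0\min(1, r^j|\vx|)\big).
\end{equation*}
The exponential factors collect immediately: since $\sum_{j=1}^\infty r^j = r/(1-r)$, they multiply to $e^{B|\mathfrak{Im}(\vx)|}$ with $B \equiv B_0\,r/(1-r)$, which will be the radius of the support.

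The heart of the argument --- and the step I expect to be the main obstacle --- is to show that the remaining product $P(\vx) \equiv \prod_{j=1}^\infty\big(1 + C_0\min(1, r^j|\vx|)\big)$ grows only polynomially in $|\vx|$. For this I would split the index set at $J_0 = J_0(\vx)$, the largest $j$ with $r^j|\vx|\geq 1$; for $|\vx|\geq 1$ one has $J_0 \leq \log_{1/r}|\vx|$. For the finitely many indices $j \leq J_0$ the minimum equals $1$, so each factor is at most $1+C_0$ and their product is at most $(1+C_0)^{J_0} \leq |\vx|^{\,\ln(1+C_0)/\ln(1/r)}$, polynomial of order $N \equiv \ln(1+C_0)/\ln(1/r)$. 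For the tail indices $j > J_0$ the minimum equals $r^j|\vx|$, and using $1+x\leq e^x$ together with $r^{J_0+1}|\vx|<1$ one gets
\begin{equation*}
\prod_{j>J_0}\big(1+C_0 r^j|\vx|\big) \leq \exp\!\Big(C_0|\vx|\,\tfrac{r^{J_0+1}}{1-r}\Big) \leq e^{C_0/(1-r)},
\end{equation*}
a constant independent of $\vx$. Hence $P(\vx)\leq C(1+|\vx|)^N$ after absorbing the case $|\vx|\leq 1$ into the constant.

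Combining the exponential and polynomial bounds yields the required Paley--Wiener estimate with the constants $B,N,C$ above, completing the proof. The one point demanding care is the counting in the split: the polynomial degree $N$ arises entirely from the number of factors in which the cut-off $\min(1,\cdot)$ is saturated at $1$, while the contribution of the genuinely small factors must be shown to stay bounded --- this is exactly where the gain $r^{J_0+1}|\vx|<1$ coming from the definition of $J_0$ is used.
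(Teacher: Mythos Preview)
Your proof is correct and follows essentially the same route as the paper: both apply Schwartz's Paley--Wiener Theorem to $g$, multiply the factorwise bound from Lemma \ref{ineq}, collect the exponentials into $e^{B|\mathfrak{Im}(\vx)|}$ with $B=B_0\sum_j r^j$, and then split the remaining product $\prod_j(1+C_0\min(1,r^j|\vx|))$ at the threshold index where $r^j|\vx|$ crosses $1$, bounding the finite head by $(1+C_0)^{J_0}\le(1+|\vx|)^N$ and the tail by a constant via $1+x\le e^x$. Your $r=\|(A^\tau)^{-1}\|$ is the paper's $\beta^{-1}$, your $J_0$ coincides with the paper's $j_0$ determined by $|\vx|\in[\beta^{j_0},\beta^{j_0+1})$, and the resulting constants $N=\log_{1/r}(1+C_0)$ and tail bound $e^{C_0/(1-r)}$ match exactly.
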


\begin{proof}
Let $\vx\in\R^2, \vx\neq \vec{0}.$\footnote{The case when $\vx = \vec{0}$ is trivial. We omit it.}
By Schwartz's Paley-Wiener Theorem, it suffices to prove that the function $g$ satisfies the inequality
(\ref{ineqcpt}).

We write $\beta=\|(A^\tau)^{-1}\|^{-1}.$ Since $A$ is expansive, $\beta\in (1,\infty)$.
 We have
 \begin{equation*}
   \prod_{j=1}^{\infty} e^{B_0 \|(A^\tau)^{-1}\|^j |\mathfrak{Im}(\vx)| }
   =e^{B |\mathfrak{Im}(\vx)|}
 \end{equation*}
 where $B \equiv \sum_{j=1} ^\infty \frac{B_0}{\beta^j}.$ So by Lemma \ref{ineqcpt} we have
 \begin{eqnarray*}
  |g(\vx)|
  &=& \big| \frac{1}{2\pi}\prod_{j=1}^{\infty}m_0((A^{\tau})^{-j} \vec{\xi})\big|\\
  &\leq&
  \frac{1}{2\pi}\prod_{j=1}^{\infty} e^{B_0 \|(A^\tau)^{-1}\|^j |\mathfrak{Im}(\vx)| }
\big( 1+  C_0
\min (1, \|(A^\tau)^{-1}\| ^j | \vx|)\big)\\
  &\leq& \frac{1}{2\pi} e^{B|\mathfrak{Im}(\vx)| }
\prod_{j=1}^{\infty}
  \big( 1+  C_0\min (1, \|(A^\tau)^{-1}\| ^j | \vx|)\big)\\
    &=& \frac{1}{2\pi} e^{B|\mathfrak{Im}(\vx)| }
\prod_{j=1}^{\infty}
  \big( 1+  C_0\min (1, \frac{|\vx|}{\beta^j})\big).
\end{eqnarray*}
On the other hand, the sequence $\{\beta^{j}\}$ is monotonically increasing to $+\infty$.
 Let $I_j \equiv [\beta^j,\beta^{j+1}), j\in\N$ and $I_0 \equiv (0,\beta).$
The set of intervals $\{ I_j, j\geq 0 \}$ is a partition of $(0,\infty).$
So $|\vx| \in I_{j_0}$ for some integer $j_0 \geq 0.$
We have
\begin{eqnarray*}
  (1+C_0)^{j_0} &=& \beta^{j_0 \log _{\beta} (1+C_0)}\\
&\leq& |\vx|^{\log _{\beta} (1+C_0)}\\
&\leq& (1+|\vx|)^N,
\end{eqnarray*}
where $N $ is the smallest natural number no less than $ \log _{\beta} (1+C_0).$ This is a constant related to $A$ and $N_0$ only.
So, we have
 \begin{eqnarray*}
  |g(\vx)|
  &\leq& \frac{1}{2\pi} (1+C_0)^{j_0} e^{B|\mathfrak{Im}(\vx)| }
\prod_{j=j_0+1}^{\infty}
  \big( 1+  C_0\min (1, \frac{| \vx|}{\beta^j})\big)\\
  &\leq& (1+|\vx|)^N e^{B|\mathfrak{Im}(\vx)| }\cdot \Big(\frac{1}{2\pi}
\prod_{j=j_0+1}^{\infty}
  \big( 1+  C_0\min (1, \frac{| \vx|}{\beta^j})\big)\Big).
\end{eqnarray*}
Now, since $|\vx|\in I_{j_0}=[\beta^{j_0},\beta^{j_0+1}), \frac{|\vx|}{\beta^{j_0+1}} < 1$.
We have
\begin{eqnarray*}
  \frac{1}{2\pi}
\prod_{j=j_0+1}^{\infty}
  \big( 1+  C_0\min (1, \frac{| \vx|}{\beta^j})\big)
&=&
  \frac{1}{2\pi} \prod_{j=j_0+1}^{\infty}
  (1+C_0 \frac{| \vx|}{\beta^{j_0+1}} \cdot \frac{1}{\beta^{j-(j_0+1)}})\\
&\leq&
  \frac{1}{2\pi} \prod_{k=0}^{\infty}
  (1+   \frac{C_0}{\beta^{k}})\\
&\leq&
  \frac{1}{2\pi}
  e^{\sum \frac{C_0}{\beta^{k}}}.
\end{eqnarray*}
Denote $C \equiv \frac{1}{2\pi}
  e^{\sum \frac{C_0}{\beta^{k}}}.$
This is a constant decided by the matrix $A$.

Combining the above argument, we have
\begin{equation*}
|g(\vx)| \leq C (1+|\vx|)^N e^{B|\mathfrak{Im}(\vx)| }.
\end{equation*}
\end{proof}

\section{Normalized Tight Frame Wavelet Function $\psi$}

In this section we will construct a normalized tight frame wavelet function $\psi$ associated with the scaling function $\varphi.$
By definition  \eqref{g} and Lemma \ref{k} we have
\begin{eqnarray*}
  \widehat{\varphi}(\vs)
  &=&g(\vs) =  m_0 ((A^\tau)^{-1} \vs) \cdot \frac{1}{2\pi}\prod_{j=2}^{\infty}m_0((A^{\tau})^{-j} \vec{\xi})\\
  &=& m_0 ((A^\tau)^{-1} \vs) g((A^\tau)^{-1} \vs)\\
  &=&
  \frac{1}{\sqrt{2}}\sum_{\vec{n}\in\Lambda_0} h_{\vec{n}} e^{-i\vec{n}\circ (A^\tau)^{-1}\vs} g((A^\tau)^{-1} \vs)\\
  &=&
  \sum_{\vec{n}\in\Lambda_0} h_{\vec{n}} \widehat{T}_{A^{-1}\vec{n}}\widehat{D}_A g(\vs)\\
  &=&
  \sum_{\vec{n}\in\Lambda_0} h_{\vec{n}} \widehat{D}_A\widehat{T}_{\vec{n}} \widehat{\varphi}(\vs).
\end{eqnarray*}
Taking  Fourier inverse transform on two sides, we have
\begin{eqnarray}\label{2rel}
  \varphi &=& \sum_{\vec{n}\in\Lambda_0} h_{\vec{n}} D_AT_{\vec{n}} \varphi , \text{ or}\\
    \varphi (\vec{t}) &=& \sqrt{2} \sum_{\vec{n}\in\Lambda_0} h_{\vec{n}} \varphi (A\vec{t}-\vec{n}), \ \vt\in\R^2.
\end{eqnarray}

Define
\begin{equation*}
    \sigma_A (\vec{n}) = \left\{
    \begin{array}{cccc}
      0 & \vec{n} \in A \Z^2,  \\
      1 & \vec{n} \notin A \Z^2.
    \end{array}
    \right.
\end{equation*}
\begin{remark}\label{sigmaA}
It is clear that we have $\sigma_A (\vec{u}+A \vec{v}) = \sigma_A (\vec{u}), \forall \vec{u},\vec{v}\in\Z^2$.
By Proposition \ref{properties} (\ref{aatau}) $A\Z^2 = A^\tau \Z^2,$  so we have $\sigma_A(\vec{n}) =0$ if and only if
$\vec{n} \in A^\tau \Z^2.$ Furthermore, we have $\sigma_A(\vec{\ell}_A)=1$ and $\sigma_A(\vec{\ell}_A-\vec{n})=1-\sigma_A(\vec{n}), \forall \vec{n}\in\Z^2.$
\end{remark}
\begin{definition}\label{psi}
Define a function $\psi$ on $\R^2$ by
\begin{eqnarray}\label{defpsi}
  \psi &=& \sum_{\vec{n}\in\Z^2}
  (-1)^{\sigma_A (\vec{n})} \overline{h_{\vec{\ell}_A-\vec{n}}} D_AT_{\vec{n}} \varphi , \text{ or equivalently }\\
    \psi (\vec{t}) &=& \sqrt{2} \sum_{\vec{n}\in\Z^2}
     (-1)^{\sigma_A (\vec{n})} \overline{h_{\vec{\ell}_A-\vec{n}}}  \varphi (A\vec{t}-\vec{n}),\forall \vec{t}\in\R^2.
\end{eqnarray}
\end{definition}
In this section we will prove that the function $\psi$ is a normalized tight frame wavelet associated with the expansive matrix $A.$ It is clear that the function $\psi$ has a compact support since the scaling function $\varphi$ has a compact support and the sum in the definition for $\psi$ has only finite non-zero terms. For $J\in\Z,$ and $f\in L^2(\R^2)$ define

\begin{eqnarray*}
I_J
 &\equiv&
\sum_{\vec{k}\in\Z^2} \langle f,D_A^JT_{\vec{k}}\varphi\rangle D_A^JT_{\vec{k}}\varphi; \\
F_J
&\equiv&
\sum_{\vec{k}\in\Z^2} \langle f,D_A^JT_{\vec{k}}\psi\rangle D_A^JT_{\vec{k}}\psi.
\end{eqnarray*}

\begin{lemma}\label{telleskope}
Let $f\in L^2(\R^2)$. Then
\begin{equation}\label{T}
  I_{J+1} = I_J + F_J, \forall J\in\Z.
\end{equation}
\end{lemma}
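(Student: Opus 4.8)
The plan is to reduce the claim to the single case $J=0$ and then to a purely discrete identity on the filter coefficients $\{h_{\vec{n}}\}$. Since $D_A$ is unitary and $\langle f, D_A^J T_{\vec{k}}\eta\rangle = \langle D_A^{-J}f, T_{\vec{k}}\eta\rangle$ for $\eta\in\{\varphi,\psi\}$, one checks directly that $I_J(f) = D_A^J\, I_0(D_A^{-J}f)$ and $F_J(f)=D_A^J\,F_0(D_A^{-J}f)$; that is, the operators $f\mapsto I_J(f)$ and $f\mapsto F_J(f)$ are the conjugates of $f\mapsto I_0(f)$ and $f\mapsto F_0(f)$ by $D_A^J$. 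Conjugating the identity $I_1=I_0+F_0$ by $D_A^J$ therefore yields $I_{J+1}=I_J+F_J$ for every $J\in\Z$. So it suffices to prove $I_1 = I_0 + F_0$.

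For the case $J=0$ I would pass to the finer generating system $g_{\vec{m}}\equiv D_A T_{\vec{m}}\varphi$, $\vec{m}\in\Z^2$, so that $I_1 = \sum_{\vec{m}} \langle f,g_{\vec{m}}\rangle g_{\vec{m}}$. Using the refinement relation \eqref{2rel}, the commutation $T_{\vec{k}}D_A = D_A T_{A\vec{k}}$ from Lemma \ref{k}, and the coset invariance $\sigma_A(\vec{n} + A\vec{v})=\sigma_A(\vec{n})$ of Remark \ref{sigmaA}, I would expand
\begin{equation*}
T_{\vec{k}}\varphi = \sum_{\vec{m}} h_{\vec{m} - A\vec{k}}\, g_{\vec{m}}, \qquad T_{\vec{k}}\psi = \sum_{\vec{m}} (-1)^{\sigma_A(\vec{m})}\,\overline{h_{\vec{\ell}_A-\vec{m}+A\vec{k}}}\; g_{\vec{m}},
\end{equation*}
each a finite sum since only finitely many $h_{\vec{n}}$ are nonzero. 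Substituting into $I_0$ and $F_0$, writing $a_{\vec{m}}\equiv\langle f,g_{\vec{m}}\rangle$, and collecting the coefficient of each $g_{\vec{p}}$, the identity $I_1=I_0+F_0$ becomes the assertion that for all $\vec{m},\vec{p}\in\Z^2$
\begin{equation*}
\sum_{\vec{k}}\overline{h_{\vec{m}-A\vec{k}}}\,h_{\vec{p}-A\vec{k}} +(-1)^{\sigma_A(\vec{m})+\sigma_A(\vec{p})}\sum_{\vec{k}} h_{\vec{\ell}_A-\vec{m}+A\vec{k}}\,\overline{h_{\vec{\ell}_A-\vec{p}+A\vec{k}}} =\delta_{\vec{m}\vec{p}}.
\end{equation*}
Calling the two sums $S_1$ and $S_2$ and reindexing gives $S_1=\sum_{\vec{n}\in\vec{p}+A^\tau\Z^2}h_{\vec{n}}\overline{h_{\vec{n}+\vec{r}}}$ and $S_2=\sum_{\vec{n}\in\vec{\ell}_A-\vec{m}+A^\tau\Z^2}h_{\vec{n}}\overline{h_{\vec{n}+\vec{r}}}$ with $\vec{r}\equiv\vec{m}-\vec{p}$, where I have used $A\Z^2=A^\tau\Z^2$ from Proposition \ref{properties}(\ref{aatau}).

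The discrete identity then splits into two cases governed by the partition $\Z^2 = A^\tau\Z^2\ \dotcup\ (\vec{\ell}_A+A^\tau\Z^2)$ of Proposition \ref{properties}(1). If $\vec{r}\in A^\tau\Z^2$, then $\sigma_A(\vec{m})=\sigma_A(\vec{p})$, the sign is $+1$, and the two index cosets $\vec{p}+A^\tau\Z^2$ and $\vec{\ell}_A-\vec{m}+A^\tau\Z^2$ are complementary (their $\sigma_A$-parities differ, since $\sigma_A(\vec{\ell}_A-\vec{m})=1-\sigma_A(\vec{m})$), so $S_1+S_2=\sum_{\vec{n}\in\Z^2}h_{\vec{n}}\overline{h_{\vec{n}+\vec{r}}}=\delta_{\vec{0}\vec{r}}=\delta_{\vec{m}\vec{p}}$ by the first Lawton equation in \eqref{lawtoneq}. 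If $\vec{r}\notin A^\tau\Z^2$, then $\sigma_A(\vec{m})\neq\sigma_A(\vec{p})$, the sign is $-1$, and the two cosets $\vec{p}+A^\tau\Z^2$ and $\vec{\ell}_A-\vec{m}+A^\tau\Z^2$ now coincide (equal $\sigma_A$-parity), so $S_1=S_2$ and $S_1-S_2=0=\delta_{\vec{m}\vec{p}}$. This establishes the filter identity and hence the lemma.

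The routine algebra above is not the real difficulty; the delicate point is the analytic justification that the series defining $I_J$ and $F_J$ converge in $L^2(\R^2)$ and that the rearrangements collecting the coefficient of each $g_{\vec{p}}$ are legitimate. I expect to handle this by first establishing that $\{T_{\vec{k}}\varphi\}$ is a Bessel system, so that all the operator sums converge unconditionally, using that $\varphi$ has compact support by Proposition \ref{compactsupport} and that $\widehat{\varphi}=g$ is bounded by Remark \ref{gbd}; alternatively one may verify the identity on the Fourier side, where each $\langle f,T_{\vec{k}}\eta\rangle$ is a Fourier coefficient of a $2\pi$-periodic function and the filter identity appears directly through the periodization of $|g|^2$. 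Once Bessel bounds are in hand, the finiteness of the refinement sums makes every interchange of summation absolutely convergent, so the formal computation above becomes rigorous.
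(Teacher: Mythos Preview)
Your proof is correct and follows essentially the same route as the paper: reduce to $J=0$ by conjugating with $D_A^J$, expand $T_{\vec{k}}\varphi$ and $T_{\vec{k}}\psi$ in the finer system $D_A T_{\vec{m}}\varphi$ via the refinement relations, and verify the resulting coefficient identity $\alpha_{\vec{m},\vec{p}}+\beta_{\vec{m},\vec{p}}=\delta_{\vec{m}\vec{p}}$ by the two-coset case split using $A\Z^2=A^\tau\Z^2$ and Lawton's equations. Your added remarks on Bessel convergence go a step beyond the paper, which carries out the computation formally without addressing unconditional convergence.
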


\begin{proof}

1. The case $J=0.$
By the equation \eqref{2rel}, Definition \ref{psi} and Lemma \ref{k}, we have
\begin{eqnarray*}
  I_0 &=&
  \sum_{\vec{k}\in\Z^2} \langle f, T_{\vec{k}}\varphi \rangle T_{\vec{k}}\varphi \\
  &=&
  \sum_{\vec{k}\in\Z^2} \langle f, T_{\vec{k}} \sum_{\vec{p}\in\Z^2} h_{\vec{p}} D_AT_{\vec{p}}\varphi \rangle T_{\vec{k}} \sum_{\vec{q}\in\Z^2} h_{\vec{q}} D_AT_{\vec{q}}\varphi \\
    &=&\sum_{\vec{p}\in\Z^2} \sum_{\vec{q}\in\Z^2} \sum_{\vec{k}\in\Z^2}
 \overline{h_{\vec{p}}} h_{\vec{q}}\langle f,  D_AT_{\vec{p}+A\vec{k}}\varphi \rangle   D_AT_{\vec{q}+A\vec{k}}\varphi \\
 D_AT_{\vec{m}}\varphi \rangle   D_AT_{\vec{n}}\varphi\\
 F_0
& =& \sum_{\vec{k}\in\Z^2} \langle f, T_{\vec{k}}\psi \rangle T_{\vec{k}}\psi \\
& =& \sum_{\vec{k}\in\Z^2} \langle f, T_{\vec{k}} \sum_{\vec{p}\in\Z^2} (-1)^{\sigma_A(\vec{p})} \overline{h_{\vec{\ell}_A-\vec{p}}} D_AT_{\vec{p}}\varphi \rangle T_{\vec{k}} \sum_{\vec{q}\in\Z^2} (-1)^{\sigma_A(\vec{q})} \overline{h_{\vec{\ell}_A-\vec{q}}} D_AT_{\vec{q}}\varphi  \\
& =& \sum_{\vec{p}\in\Z^2}\sum_{\vec{q}\in\Z^2}\sum_{\vec{k}\in\Z^2} (-1)^{\sigma_A(\vec{p})+\sigma_A(\vec{q})}h_{\vec{\ell}_A-\vec{p}}
\overline{h_{\vec{\ell}_A-\vec{q}}}
\langle f,  D_AT_{\vec{p}+A\vec{k}}\varphi \rangle   D_AT_{\vec{q}+A\vec{k}}\varphi
\end{eqnarray*}
Using substitutions $\vec{m} \equiv \vec{p}+ A\vec{k}$ and $\vec{n} \equiv \vec{q}+ A\vec{k},$
we have by Remark \ref{sigmaA}

\begin{eqnarray*}
I_0      &=& \sum_{\vec{m},\vec{n}\in\Z^2}  \sum_{\vec{k}\in\Z^2}
 \overline{h_{\vec{m}-A\vec{k}}} h_{\vec{n}-A\vec{k}}\langle f,  D_AT_{\vec{m}}\varphi \rangle   D_AT_{\vec{n}}\varphi\\
F_0 &=&\sum_{\vec{m},\vec{n}\in\Z^2}  \sum_{\vec{k}\in\Z^2}
(-1)^{\sigma_A(\vec{m}-A\vec{k})+\sigma_A(\vec{n}-A\vec{k})}
h_{\vec{\ell}_A-\vec{m}+A\vec{k}} \overline{h_{\vec{\ell}_A-\vec{n}+A\vec{k}}}
\langle f,  D_AT_{\vec{m}}\varphi \rangle   D_AT_{\vec{n}}\varphi\\
&=&\sum_{\vec{m},\vec{n}\in\Z^2}  \sum_{\vec{k}\in\Z^2}
(-1)^{\sigma_A(\vec{m})+\sigma_A(\vec{n})}
h_{\vec{\ell}_A-\vec{m}+A\vec{k}} \overline{h_{\vec{\ell}_A-\vec{n}+A\vec{k}}}
\langle f,  D_AT_{\vec{m}}\varphi \rangle   D_AT_{\vec{n}}\varphi
\end{eqnarray*}
We will use notations
\begin{eqnarray*}
    \alpha_{\vec{m},\vec{n}} &\equiv& \sum_{\vec{k}\in\Z^2}
 \overline{h_{\vec{m}-A\vec{k}}} h_{\vec{n}-A\vec{k}}=\sum_{\vec{\ell}\in A\Z^2}
 \overline{h_{\vec{m}+\vec{\ell}}} h_{\vec{n}+\vec{\ell}}=\sum_{\vec{\ell}\in \vec{n}+ A\Z^2}
 \overline{h_{\vec{\ell}+(\vec{m}-\vec{n})}} h_{\vec{\ell}}, \\
\beta_{\vec{m},\vec{n}}
&\equiv&
\sum_{\vec{k}\in\Z^2}
(-1)^{\sigma_A(\vec{m})+\sigma_A(\vec{n})}
h_{\vec{\ell}_A-\vec{m}+A\vec{k}} \overline{h_{\vec{\ell}_A-\vec{n}+A\vec{k}}}\\
&=&
 (-1)^{\sigma_A(\vec{m})+\sigma_A(\vec{n})}
 \sum_{\vec{\ell}\in \vec{\ell}_A-\vec{m}+A\Z^2}
\overline{h_{\vec{\ell}+(\vec{m}-\vec{n})}}h_{\vec{\ell}}.
\end{eqnarray*}

If $\vec{m}-\vec{n} \in A\Z^2,$ then $\sigma(\vec{m}) = \sigma_A(\vec{n}), $
$(-1)^{\sigma_A(\vec{m})+\sigma_A(\vec{n})}=1$ and
$(\vec{\ell}_A-\vec{m}+A^\tau\Z^2)\dotcup (\vec{n}+ A^{\tau}\Z^2)=\Z^2.$ By Lawton's equations \eqref{lawtoneq}, we have
\begin{eqnarray*}
\alpha_{\vec{m},\vec{n}}+\beta_{\vec{m},\vec{n}}
&=&
\sum_{\vec{\ell}\in \vec{n}+ A\Z^2}
 \overline{h_{\vec{\ell}+(\vec{m}-\vec{n})}} h_{\vec{\ell}}
+
 \sum_{\vec{\ell}\in \vec{\ell}_A-\vec{m}+A\Z^2}
\overline{h_{\vec{\ell}+(\vec{m}-\vec{n})}}h_{\vec{\ell}}\\
&=&
 \sum_{\vec{\ell}\in\Z^2}
\overline{h_{\vec{\ell}+(\vec{m}-\vec{n})}}h_{\vec{\ell}} = \delta_{\vec{m},\vec{n}}.
\end{eqnarray*}

If $\vec{m}-\vec{n} \in \vec{\ell}_A+A \Z^2,$
then exactly one element of $\vec{m}$ and $\vec{n}$ is in $A\Z^2$ and the other one is
in $\vec{\ell}_A + A\Z^2.$
Then
$(-1)^{\sigma_A(\vec{m})+\sigma_A(\vec{n})}=-1$ and
$(\vec{\ell}_A-\vec{m}+A^\tau\Z^2)= (\vec{n}+ A^{\tau}\Z^2).$
Hence,
\begin{eqnarray*}
\alpha_{\vec{m},\vec{n}}+\beta_{\vec{m},\vec{n}}
&=&
\sum_{\vec{\ell}\in \vec{n}+ A^{\tau}\Z^2}
 \overline{h_{\vec{\ell}+(\vec{m}-\vec{n})}} h_{\vec{\ell}}
\ -
 \sum_{\vec{\ell}\in \vec{\ell}_A-\vec{m}+A^\tau\Z^2}
\overline{h_{\vec{\ell}+(\vec{m}-\vec{n})}}h_{\vec{\ell}}\\
&=&
0 = \delta_{\vec{m},\vec{n}}.
\end{eqnarray*}
Therefore, we have
\begin{eqnarray*}
I_0 + F_0
&=&
\sum_{\vec{m},\vec{n}\in\Z^2}
(\alpha_{\vec{m},\vec{n}}+\beta_{\vec{m},\vec{n}})
\langle f,  D_AT_{\vec{m}}\varphi \rangle   D_AT_{\vec{n}}\varphi\\
&=&
\sum_{\vec{m},\vec{n}\in\Z^2}
\delta_{\vec{m},\vec{n}}
\langle f,  D_AT_{\vec{m}}\varphi \rangle   D_AT_{\vec{n}}\varphi\\
&=&
\sum_{\vec{k}\in\Z^2}
\langle f,  D_AT_{\vec{k}}\varphi \rangle   D_AT_{\vec{k}}\varphi\\
&=& I_1.
\end{eqnarray*}
This is
\begin{equation}\label{A}
\sum_{\vec{k}\in\Z^2} \langle f,D_AT_{\vec{k}}\varphi\rangle D_AT_{\vec{k}}\varphi =
\sum_{\vec{k}\in\Z^2} \langle f,T_{\vec{k}}\varphi\rangle T_{\vec{k}}\varphi
+ \sum_{\vec{k}\in\Z^2} \langle f,T_{\vec{k}}\psi\rangle T_{\vec{k}}\psi.
\end{equation}

2. The general case. Let $f\in L^2(\R^2).$
We  replace $f$ by $ (D_A ^*)^J f$ in equation (\ref{A}) where $D_A ^*$ is the unitary operator dual to $D_A$. Then we have
\begin{equation*}
\sum_{\vec{k}\in\Z^2} \langle (D_A ^J)^* f,D_A T_{\vec{k}}\varphi\rangle D_A T_{\vec{k}}\varphi
=
\sum_{\vec{k}\in\Z^2} \langle (D_A ^J)^* f,T_{\vec{k}}\varphi\rangle T_{\vec{k}}\varphi
+
\sum_{\vec{k}\in\Z^2} \langle (D_A ^J)^* f,T_{\vec{k}}\psi\rangle T_{\vec{k}}\psi.
\end{equation*}
Apply $D_A ^J$ to both sides of the equation. By using
$\langle (D_A ^J)^* f, h \rangle = \langle f, D_A ^J h \rangle,$ we obtain the desired general equality (\ref{T}).

\end{proof}

In the rest of this section, we will establish the main result of this paper. We state Theorem \ref{theom_frame} first. We will complete the proof through lemmas and propositions.
For $f\in L^2 (\R^2)$ and $J\in \Z,$ we will use the following notations.
\begin{eqnarray*}
  L_J(f) &\equiv& \sum_{\vec{\ell} \in \Z^2} |\langle f,D_A^JT_{\vec{\ell}}\varphi \rangle |^2; \text{ in particular}\\
  L_0(f) &=& \sum_{\vec{\ell} \in \Z^2} |\langle f,T_{\vec{\ell}}\varphi \rangle |^2.
\end{eqnarray*}

For a positive number $\rho$ we define functions $f_\rho$ and
$f_{\overline{\rho}}$ by
$\widehat{f_\rho} \equiv \widehat{f} \cdot \chi_{\{|\vt|\leq \rho\}}$
and
$\widehat{f_{\overline{\rho}}} \equiv \widehat{f} \cdot \chi_{\{|\vt| > \rho\}},$
respectively. Here $\chi$ is the characteristic function.
Then we have
$f=f_\rho+f_{\overline{\rho}}.$ Also, it is clear that $\|f\|^2 = \|\widehat{f}\|^2=\|f_\rho\|^2 +\|f_{\overline{\rho}}\|^2$, $\lim_{\rho\rightarrow\infty}\|f_\rho\|^2=\|f\|^2$ and
$\lim_{\rho\rightarrow\infty}\|f_{\overline{\rho}}\|^2=0.$

\begin{theorem}\label{theom_frame}
Let $\psi$ be as defined in Definition \ref{psi}. Then,  $\{D_A ^n T_{\vec{\ell}}\psi, n\in\Z,\vec{\ell}\in\Z^2\}$ is a normalized tight frame for $\L^2(\R^2)$.
\end{theorem}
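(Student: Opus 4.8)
The plan is to derive the frame identity \eqref{framew} from the telescoping relation in Lemma \ref{telleskope}. Pairing \eqref{T} with $f$ and writing $G_J(f)\equiv\sum_{\vec\ell\in\Z^2}|\langle f,D_A^JT_{\vec\ell}\psi\rangle|^2=\langle F_J,f\rangle$ (and $L_J(f)=\langle I_J,f\rangle$) turns \eqref{T} into $L_{J+1}(f)=L_J(f)+G_J(f)$, so that $G_J(f)\ge 0$ and, summing a finite range of scales,
\[
\sum_{J=-M}^{N-1}G_J(f)=L_N(f)-L_{-M}(f).
\]
Since $\sum_{n\in\Z,\vec\ell\in\Z^2}|\langle f,D_A^nT_{\vec\ell}\psi\rangle|^2=\sum_{J\in\Z}G_J(f)$, the theorem reduces to the two scale limits $\lim_{J\to+\infty}L_J(f)=\|f\|^2$ and $\lim_{J\to-\infty}L_J(f)=0$, to be proved for every $f\in L^2(\R^2)$.

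First I would put $L_J(f)$ on the Fourier side. Using Lemma \ref{k} and Remark \ref{vv}, $\langle f,D_A^JT_{\vec\ell}\varphi\rangle=\sqrt{2^J}\int_{\R^2}\hat f((A^\tau)^J\vec u)\,\overline{\widehat\varphi(\vec u)}\,e^{-i\vec u\circ\vec\ell}\,d\vec u$, which is the $\vec\ell$-th Fourier coefficient over $\Gamma_\pi=[-\pi,\pi]^2$ of the $(2\pi\Z^2)$-periodization of $K_J(\vec u)\equiv\sqrt{2^J}\hat f((A^\tau)^J\vec u)\overline{\widehat\varphi(\vec u)}$; Parseval on the torus then gives $L_J(f)=(2\pi)^2\int_{\Gamma_\pi}\big|\sum_{\vec m\in\Z^2}K_J(\vec u+2\pi\vec m)\big|^2\,d\vec u$ (this identity is the content of Lemma \ref{identitylj}). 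I would also record two facts about $\varphi$: that $\widehat\varphi(\vec 0)=g(\vec 0)=\tfrac1{2\pi}$ since $m_0(\vec 0)=1$; and that, because $\varphi$ has compact support (Proposition \ref{compactsupport}) and lies in $L^2$, it is in $L^1(\R^2)$, so $\widehat\varphi$ is continuous, bounded (Remark \ref{gbd}), and tends to $0$ at infinity by Riemann--Lebesgue, while $\Phi(\vec u)\equiv\sum_{\vec m}|\widehat\varphi(\vec u+2\pi\vec m)|^2$ is a trigonometric polynomial (the auto-correlation of $\varphi$ is finitely supported), hence bounded, yielding the uniform Bessel bound $L_0(h)\le\|\Phi\|_\infty\|h\|^2$.

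For $J\to+\infty$ I would first reduce to band-limited data: writing $f=f_\rho+f_{\overline\rho}$, the Bessel bound together with $\|f_{\overline\rho}\|\to 0$ lets me treat $f=f_\rho$, i.e. $\widehat f$ supported in $\{|\vt|\le\rho\}$. For such $f$ and $J$ large, $(A^\tau)^{-J}$ confines the support of $\vec u\mapsto\hat f((A^\tau)^J\vec u)$ to a shrinking neighborhood of $\vec 0$, so only $\vec m=\vec 0$ survives in the periodization over $\Gamma_\pi$; substituting $\vec s=(A^\tau)^J\vec u$ gives
\[
L_J(f)=(2\pi)^2\int_{\R^2}|\hat f(\vec s)|^2\,|\widehat\varphi((A^\tau)^{-J}\vec s)|^2\,d\vec s .
\]
Since $(A^\tau)^{-J}\vec s\to\vec 0$ and $\widehat\varphi$ is continuous with $\widehat\varphi(\vec 0)=\tfrac1{2\pi}$, dominated convergence (with dominant $\|\widehat\varphi\|_\infty^2|\hat f|^2$) yields $L_J(f)\to(2\pi)^2|\widehat\varphi(\vec 0)|^2\|f\|^2=\|f\|^2$.

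The delicate limit, and the main obstacle, is $L_{-M}(f)\to 0$. Here the natural tool is the Poisson-summation expansion of the periodized integral (again after reducing to $f=f_\rho$),
\[
L_{-M}(f)=(2\pi)^2\sum_{\vec m\in\Z^2}\int_{\R^2}\hat f(\vec s)\,\overline{\hat f(\vec s+2\pi(A^\tau)^{-M}\vec m)}\;\overline{\widehat\varphi((A^\tau)^{M}\vec s)}\,\widehat\varphi((A^\tau)^{M}\vec s+2\pi\vec m)\,d\vec s .
\]
The diagonal term $\vec m=\vec 0$ equals $(2\pi)^2\int|\hat f(\vec s)|^2|\widehat\varphi((A^\tau)^{M}\vec s)|^2\,d\vec s$, which tends to $0$: for $\vec s\ne\vec 0$ one has $(A^\tau)^{M}\vec s\to\infty$, so $\widehat\varphi((A^\tau)^{M}\vec s)\to 0$ by Riemann--Lebesgue, and the integrand is dominated by $\|\widehat\varphi\|_\infty^2|\hat f|^2$. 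The hard part is the off-diagonal sum $\vec m\ne\vec 0$: although every summand carries the vanishing factor $\widehat\varphi((A^\tau)^M\vec s)$ and the shift $2\pi(A^\tau)^{-M}\vec m\to\vec 0$, the number of contributing $\vec m$ grows like $2^{M}$, so termwise decay alone does not suffice. I would control this tail by playing the uniform smallness $\|\widehat{D_A^{M}f}\|_\infty=2^{-M/2}\|\hat f\|_\infty\to 0$ against the conserved mass $\|\widehat{D_A^{M}f}\|_2=\|f\|$ and the decay of $\widehat\varphi$ at infinity, estimating the double sum through the bounded symbol $\Phi$ and the compact support of $\hat f$. Once the off-diagonal contribution is shown to vanish, inserting both scale limits into the telescoped identity gives \eqref{framew}, which establishes that $\{D_A^nT_{\vec\ell}\psi\}$ is a normalized tight frame for $L^2(\R^2)$.
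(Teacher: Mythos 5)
Your architecture is the same as the paper's: the telescoping identity of Lemma \ref{telleskope} reduces the theorem to the two scale limits $\lim_{J\rightarrow+\infty}L_J(f)=\|f\|^2$ and $\lim_{J\rightarrow+\infty}L_{-J}(f)=0$; your periodization/Parseval identity is exactly Lemma \ref{identitylj}; and your treatment of the $+\infty$ limit (Bessel bound, reduction to band-limited $f_\rho$, only the $\vec{m}=\vec{0}$ term of the periodization surviving for large $J$, dominated convergence with $2\pi\widehat{\varphi}(\vec{0})=1$) is essentially Proposition \ref{isnorm}. The problem is the other limit, and you flag it yourself: your proof of $L_{-M}(f)\rightarrow 0$ ends with ``Once the off-diagonal contribution is shown to vanish,'' i.e.\ the decisive step is not carried out. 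As written it does not go through: pointwise Riemann--Lebesgue decay of $\widehat{\varphi}$ together with the factor $2^{-M/2}$ only yields a \emph{bounded} estimate, since the number of contributing $\vec{m}$ grows like $2^{M}$ (Cauchy--Schwarz against the symbol $\Phi$ merely reproduces the Bessel bound), and your device $\|\widehat{D_A^{M}f}\|_\infty=2^{-M/2}\|\hat f\|_\infty$ presupposes $\hat f\in L^\infty$, which band-limitation alone does not give. The estimate can be rescued --- split the periodization into $|\vec{m}|\leq R$ and $|\vec{m}|>R$, control the far part uniformly in $M$ by the uniform smallness of the tails of $\Phi$ (Dini's theorem: the partial sums are monotone, continuous, and converge to the continuous trigonometric polynomial $\Phi$ on the compact set $\Gamma_\pi$), and beat the near part by the factor $2^{-M}$ after a further density reduction to $\hat f$ bounded with compact support --- but none of this appears in your proposal, so the proof is incomplete at its most delicate point.

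The paper avoids this difficulty entirely by proving the $-\infty$ limit in the \emph{time} domain (Proposition \ref{go0}), which is where the compact support of $\varphi$ (Proposition \ref{compactsupport}) does all the work. There, the support of $D_A^{-J}T_{\vec{n}}\varphi$ is contained in $A^{J}(E_B+\vec{n})$; for $\vec{n}$ in a nonzero coset $(2B+1)\vec{\ell}+\vec{d}$, $\vec{\ell}\neq\vec{0}$, these supports escape every fixed dilate $A^{J}E_0$, so their total contribution is bounded by $\|\varphi\|^2\int_{\R^2\setminus A^{J}E_0}|f|^2\,d\mu\rightarrow 0$, and only the finitely many translates $\vec{d}\in\Lambda_B$ remain, each killed by a direct Cauchy--Schwarz estimate of order $2^{-J/2}$ after truncating $f$. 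So the comparison is: identical skeleton and an equivalent $+\infty$ limit, but for the $-\infty$ limit you chose the Fourier side, where the off-diagonal sum is genuinely hard, while the paper's time-domain argument makes it routine. To finish your proof, either execute the near/far estimate sketched above or replace that section with the paper's Proposition \ref{go0} argument.
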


\begin{proof}
Let $f\in L^2(\R^2).$ We will prove that
\begin{equation}\label{psieq}
f = \sum_{n\in\Z}\sum_{\vec{\ell}\in\Z^2}\langle f, D_A^n T_{\vec{\ell}}\psi\rangle D_A ^n T_{\vec{\ell}}\psi,
\end{equation}
the convergence is in $L^2 (\R^2)$-norm.

By Lemma \ref{telleskope},
we have
$I_j - I_{j-1} =  F_{j-1}$, $\forall j\in\Z$.
Hence
\begin{equation*}
  \sum_{j=-J+1}^{J}F_{j} = I_J - I_{-J}, \forall J\in\Z.
\end{equation*}
This implies that
\begin{eqnarray*}
&&  \sum_{j=-J+1} ^J \sum_{\vec{\ell}\in\Z^2}\langle f, D_A^j T_{\vec{\ell}}\psi\rangle D_A ^j T_{\vec{\ell}}\psi\\
&=&\sum_{\vec{\ell}\in\Z^2}\langle f, D_A^JT_{\vec{\ell}}\varphi\rangle D_A^JT_{\vec{\ell}}\varphi  - \sum_{\vec{\ell}\in\Z^2}\langle f, D_A^{-J}T_{\vec{\ell}}\varphi\rangle D^{-J}T_{\vec{\ell}}\varphi.
\end{eqnarray*}
 Taking inner product of $f$  with both sides of the equation, we have
\begin{eqnarray*}
\sum_{j=-J+1} ^J \sum_{\vec{\ell}\in\Z^2}
|\langle f, D_A^j T_{\vec{\ell}}\psi\rangle|^2
&=& L_J(f) - L_{-J}(f).
\end{eqnarray*}
By Proposition \ref{go0} and Proposition \ref{isnorm}, we have

\begin{align*}
\lim_{J\rightarrow+\infty} & L_J(f)  =  \| f \|^2; \\
\lim_{J\rightarrow+\infty} & L_{-J}(f)  =  0.\label{1}
\end{align*}
So, we have
\begin{equation*}
\sum_{j\in \Z} \sum_{\vec{\ell}\in\Z^2}|\langle f, D_A^j T_{\vec{\ell}}\psi\rangle|^2
=\|f\|^2, \forall f\in L^2(\R^2).
\end{equation*}

\end{proof}

To complete the proof of Theorem \ref{theom_frame}, we will prove Propositions \ref{go0} and
\ref{isnorm} and related Lemmas.
we first need the following

\begin{lemma}\label{conv}
Let $f\in L^2(\R^2)$.  Then
\begin{eqnarray}
\label{x}   L_J(f) = \sum_{\vec{\ell} \in \Z^2} |\langle f,D_A ^J T_{\vec{\ell}}\varphi \rangle |^2
  &\leq&
  (2B+1)^2 \|\varphi \|^2 \| f \|^2, \forall J\in\Z;\\
\label{y} \lim_{\rho\rightarrow\infty}\limsup_{J\rightarrow + \infty} L_J(f_{\overline{\rho}}) &=& 0.
\end{eqnarray}
\end{lemma}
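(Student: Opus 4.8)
The plan is to establish the uniform Bessel bound \eqref{x} first, and then to obtain the tail estimate \eqref{y} as an immediate consequence of \eqref{x} together with the fact that $\|f_{\overline{\rho}}\|^2\to 0$ as $\rho\to\infty$.

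For \eqref{x} the first step is to reduce to the case $J=0$. Since $D_A$ is unitary, $\langle f, D_A^J T_{\vec{\ell}}\varphi\rangle = \langle (D_A^J)^* f, T_{\vec{\ell}}\varphi\rangle$, so writing $h\equiv (D_A^*)^J f$ we get $L_J(f)=L_0(h)$ with $\|h\|=\|f\|$. Hence it suffices to prove $L_0(h)\le (2B+1)^2\|\varphi\|^2\|h\|^2$ for every $h\in L^2(\R^2)$, and the general case follows by unitarity. For the $J=0$ estimate I would exploit the compact support of $\varphi$. By Proposition \ref{compactsupport}, obtained through Schwartz's Paley--Wiener Theorem, $\varphi$ is supported in the closed ball of center $\vec{0}$ and radius $B$; consequently $T_{\vec{\ell}}\varphi$ is supported in the closed ball $\mathcal{B}_{\vec{\ell}}$ of center $\vec{\ell}$ and radius $B$. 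Cauchy--Schwarz on $\mathcal{B}_{\vec{\ell}}$ gives $|\langle h, T_{\vec{\ell}}\varphi\rangle|^2\le \|\varphi\|^2\int_{\mathcal{B}_{\vec{\ell}}}|h|^2$, where I used $\|T_{\vec{\ell}}\varphi\|=\|\varphi\|$. Summing over $\vec{\ell}$ and interchanging the sum with the integral (all terms being nonnegative, by Tonelli), I obtain
\[
L_0(h)\le \|\varphi\|^2\int_{\R^2}|h(\vt)|^2\Big(\sum_{\vec{\ell}\in\Z^2}\chi_{\mathcal{B}_{\vec{\ell}}}(\vt)\Big)d\vt .
\]

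The remaining point is the finite-overlap count. The overlap function $\sum_{\vec{\ell}}\chi_{\mathcal{B}_{\vec{\ell}}}(\vt)$ equals the number of lattice points $\vec{\ell}$ with $|\vt-\vec{\ell}|\le B$; each coordinate of such an $\vec{\ell}$ then lies in an interval of length $2B$, which contains at most $2B+1$ integers, so the count is at most $(2B+1)^2$ for every $\vt$. This yields $L_0(h)\le (2B+1)^2\|\varphi\|^2\|h\|^2$, hence \eqref{x}. I regard this counting estimate as the only real content of the lemma; the single subtlety to watch is that $B$ must be taken to be exactly the Paley--Wiener support radius produced in Proposition \ref{compactsupport}, so that the geometric constant $(2B+1)^2$ is the one claimed.

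Finally, \eqref{y} requires no new work. Applying \eqref{x} to $f_{\overline{\rho}}$ gives the bound $L_J(f_{\overline{\rho}})\le (2B+1)^2\|\varphi\|^2\|f_{\overline{\rho}}\|^2$, which is uniform in $J$, so $\limsup_{J\to+\infty}L_J(f_{\overline{\rho}})\le (2B+1)^2\|\varphi\|^2\|f_{\overline{\rho}}\|^2$. Letting $\rho\to\infty$ and using $\lim_{\rho\to\infty}\|f_{\overline{\rho}}\|^2=0$ then forces $\lim_{\rho\to\infty}\limsup_{J\to+\infty}L_J(f_{\overline{\rho}})=0$, which is \eqref{y}.
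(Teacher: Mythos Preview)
Your argument is correct and follows the same overall strategy as the paper: reduce to $J=0$ by unitarity of $D_A$, exploit the compact support of $\varphi$ together with Cauchy--Schwarz to obtain a Bessel bound with constant $(2B+1)^2\|\varphi\|^2$, and then deduce \eqref{y} from \eqref{x} applied to $f_{\overline{\rho}}$.

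The one place you organize things differently is the counting step. The paper does not use your overlap bound $\sum_{\vec{\ell}}\chi_{\mathcal B_{\vec\ell}}\le (2B+1)^2$; instead it takes $B\in\N$ with $\operatorname{supp}\varphi\subset[-B,B)^2$, decomposes $\Z^2$ into the $(2B+1)^2$ cosets $\vec d+(2B+1)\Z^2$ with $\vec d\in\Lambda_B=\Z^2\cap[-B,B]^2$, observes that within a fixed coset the supports of the translates $T_{\vec n}\varphi$ are pairwise disjoint, and bounds each coset sum by $\|\varphi\|^2\|f\|^2$ before summing over $\vec d$. Your covering/overlap argument is a cleaner and slightly more flexible way to reach the same constant; the paper's disjoint-support decomposition has the minor advantage that it is later reused verbatim in the proof of Proposition~\ref{go0}. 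Note also that in the paper $B$ is chosen as a natural number so that the \emph{cube} $[-B,B)^2$ contains $\operatorname{supp}\varphi$, rather than the exact Paley--Wiener radius you single out; either choice makes the constant $(2B+1)^2$ valid.
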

\begin{proof}
By Proposition \ref{compactsupport} the scaling function $\varphi$ has a compact support. Let $B$ be a natural number such that the set $[-B,B)^2$ contains the support of $\varphi.$
We will write
$E_0 \equiv [-\frac{1}{2},\frac{1}{2})^2$,$E_B \equiv [-B-\frac{1}{2},B+\frac{1}{2})^2$
and $\Lambda_B \equiv \Z^2 \cap [-B,B]^2$. For $\vec{n}\in\Z^2,$ we have $\vec{n} = (2B+1)\vec{\ell}+\vec{d},\vec{\ell}\in\Z^2,\vec{d}\in\Lambda_B.$ Here $\vec{\ell}$ and $\vec{d}\in\Lambda_B$ are uniquely determined  by $\vec{n}.$ We have
\begin{eqnarray*}
  \Z^2 &=& \bigcup _{d\in\Lambda_B} \bigcup _{\vec{\ell}\in\Z^2}(2B+1)\vec{\ell}+\vec{d}
\end{eqnarray*}
This is a disjoint union.
Also, $\{E_B+(2B+1)\vec{\ell},\vec{\ell}\in\Z^2\}$ is a partition of $\R^2$. Hence for a fixed $\vec{d} \in \Lambda_B,$
$\{E_B+(2B+1)\vec{\ell}+\vec{d},\vec{\ell}\in\Z^2\}$ is a partition of $\R^2.$
Note that the set $E_B+(2B+1)\vec{\ell}+\vec{d}$ contains the support for
$T_{\vec{n}}\varphi,$ where $\vec{n}=(2B+1)\vec{\ell}+\vec{d}.$
So, for a fixed $\vec{d} \in \Lambda_B,$ supports of functions in the set $\{T_{\vec{n}}\varphi, \vec{n}=(2B+1)\vec{\ell}+\vec{d}, \vec{\ell}\in\Z^2\}$ are disjoint.
Then we have
\begin{eqnarray*}
L_0(f)
&=&
\sum_{\vec{d} \in \Lambda_B}\sum_{\vec{\ell} \in \Z^2} |\langle f,T_{(2B+1)\vec{\ell}+\vec{d}}\varphi \rangle |^2\\
&=&
\sum_{\vec{d} \in \Lambda_B}\sum_{\vec{\ell} \in \Z^2}
\left|\int_{\R^2} \chi_{E_B+(2B+1)\vec{\ell}+\vec{d}}(\vec{t}) f(\vec{t})  T_{(2B+1)\vec{\ell}+\vec{d}}\varphi (\vec{t})d \mu \right|^2 \\
&\leq&
\sum_{\vec{d} \in \Lambda_B}
\left(\|\varphi \|^2\sum_{\vec{\ell} \in \Z^2}
\int_{E_B+(2B+1)\vec{\ell}+\vec{d}} |f(\vec{t})|^2 d\mu \right) \\
&\leq&
\sum_{\vec{d} \in \Lambda_B}
\|\varphi \|^2 \| f \|^2 \leq (2B+1)^2 \|\varphi \|^2 \| f \|^2.  \\
\end{eqnarray*}
So we have
\begin{eqnarray*}
  L_J(f)
  &=&
    \sum_{\vec{\ell} \in \Z^2} |\langle (D_A ^J )^*f, T_{\vec{\ell}}\varphi \rangle |^2
   \leq
 (2B+1)^2 \|\varphi \|^2 \| (D_A ^J )^*f \|^2\\
 &=&(2B+1)^2 \|\varphi \|^2 \| f \|^2.
\end{eqnarray*}
So we have \eqref{x}.
Since $\lim_{\rho\rightarrow\infty} \|f_{\bar{\rho}}\|=0,$ the equality \eqref{y} is an immediate consequence of the inequality \eqref{x} just proved.

\end{proof}

\begin{proposition}\label{go0}
Let  $f\in \L^2(\R^2)$. Then
\begin{equation*}
\lim_{J\rightarrow+\infty}
L_{-J}(f)=0.
\end{equation*}
\end{proposition}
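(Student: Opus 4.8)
The plan is to push the analysis back to scale zero and then show that the negative dilates of $f$ become asymptotically orthogonal to every fixed translate of $\varphi$. Since $D_A$ is unitary and $(D_A^{-J})^{*}=D_A^{J}$, we have $\langle f, D_A^{-J}T_{\vec\ell}\varphi\rangle=\langle D_A^{J}f, T_{\vec\ell}\varphi\rangle$ for every $\vec\ell$, so that $L_{-J}(f)=L_0(F_J)$ with $F_J\equiv D_A^{J}f$. Note $\|F_J\|=\|f\|$ for all $J$, and a change of variables gives $\int_{\{|\vec u|>R\}}|F_J(\vec u)|^2\,d\mu=\int_{\{|A^{-J}\vec w|>R\}}|f(\vec w)|^2\,d\mu$. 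Two earlier facts will be used repeatedly: the Bessel bound $L_0(g)\le (2B+1)^2\|\varphi\|^2\|g\|^2$ from \eqref{x} (in particular $g\mapsto L_0(g)^{1/2}$ obeys the triangle inequality), and the compact support $\mathrm{supp}\,\varphi\subseteq[-B,B)^2$ from Proposition \ref{compactsupport}.

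Next I would truncate $F_J$ in physical space. Fix $R>0$ and write $F_J=F_J'+F_J''$, where $F_J'\equiv F_J\,\chi_{\{|\vec u|\le R\}}$. Since $A$ is expansive, $\|A^{-J}\|\to 0$, hence $\{|A^{-J}\vec w|>R\}\subseteq\{|\vec w|>R/\|A^{-J}\|\}$ with $R/\|A^{-J}\|\to\infty$; as $f\in L^2(\R^2)$ this forces $\|F_J''\|^2\to 0$ as $J\to\infty$ for each fixed $R$, and therefore $L_0(F_J'')\le (2B+1)^2\|\varphi\|^2\|F_J''\|^2\to 0$. Because $F_J'$ is supported in $\{|\vec u|\le R\}$ while $T_{\vec\ell}\varphi$ is supported in $\vec\ell+[-B,B)^2$, the coefficient $\langle F_J',T_{\vec\ell}\varphi\rangle$ vanishes outside the finite, $J$-independent index set $\mathcal L_R\equiv\{\vec\ell\in\Z^2:\ (\vec\ell+[-B,B)^2)\cap\{|\vec u|\le R\}\neq\emptyset\}$. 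Thus $L_0(F_J')=\sum_{\vec\ell\in\mathcal L_R}|\langle F_J',T_{\vec\ell}\varphi\rangle|^2$ is a finite sum, and writing $\langle F_J',T_{\vec\ell}\varphi\rangle=\langle F_J,T_{\vec\ell}\varphi\rangle-\langle F_J'',T_{\vec\ell}\varphi\rangle$ with $|\langle F_J'',T_{\vec\ell}\varphi\rangle|\le\|F_J''\|\,\|\varphi\|\to 0$, everything reduces to showing that $\langle F_J,T_{\vec\ell}\varphi\rangle=\langle D_A^{J}f,T_{\vec\ell}\varphi\rangle\to 0$ for each fixed $\vec\ell$. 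The triangle inequality for $L_0(\cdot)^{1/2}$ then yields $L_{-J}(f)=L_0(F_J)\to 0$.

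The heart of the matter is the claim $D_A^{J}f\rightharpoonup 0$ weakly in $L^2(\R^2)$, which supplies the needed limit for each fixed $T_{\vec\ell}\varphi$. The point to watch is that a naive Cauchy--Schwarz estimate is useless here: $D_A^{J}f$ retains its full $L^2$-norm $\|f\|$ while concentrating its mass near the origin, so the vanishing must come from the amplitude factor $2^{-J/2}$ rather than from any loss of mass. I would prove weak convergence on the dense pair of classes $f\in L^1(\R^2)\cap L^2(\R^2)$ and $h\in L^2(\R^2)\cap L^\infty(\R^2)$: there $\langle D_A^{J}f,h\rangle=\langle f,D_A^{-J}h\rangle=\int_{\R^2} f(\vec t)\,2^{-J/2}\,\overline{h(A^{-J}\vec t)}\,d\mu$, whence $|\langle D_A^{J}f,h\rangle|\le 2^{-J/2}\|h\|_\infty\|f\|_{L^1}\to 0$. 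Since $\|D_A^{J}f\|=\|f\|$ is uniformly bounded and both classes are dense in $L^2(\R^2)$, a routine three-term approximation extends $\langle D_A^{J}f,h\rangle\to 0$ to arbitrary $f,h\in L^2(\R^2)$, and in particular to $h=T_{\vec\ell}\varphi$. This weak-convergence step, rather than the bookkeeping of the truncation, is where the real content lies.
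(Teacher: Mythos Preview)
Your argument is correct and follows essentially the same approach as the paper: both proofs split the sum into a ``far'' part controlled by the mass of $f$ escaping to infinity under the expansive dilation (you do this via the tail $F_J''$ and the Bessel bound \eqref{x}, the paper via the coset decomposition of $\Z^2$ modulo $(2B+1)\Z^2$), and a ``near'' part consisting of finitely many fixed translates handled by the $2^{-J/2}$ amplitude decay (you phrase this as the weak convergence $D_A^{J}f\rightharpoonup 0$, the paper truncates $f$ to a cube and argues directly). Your packaging is a bit cleaner, but the two proofs are the same in substance.
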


\begin{proof}
Let $f\in L^2(\R^2).$
We have
\begin{eqnarray*}
L_{-J}(f)&=& \sum_{\vec{\ell} \in \Z^2} |\langle  f,D_A^{-J}T_{\vec{\ell} }\varphi \rangle |^2\\
&=&
\sum_{\vec{d} \in \Lambda_B} \sum_{\vec{\ell} \in \Z^2}
 |\langle  f,D_A^{-J}T_{(2B+1)\vec{\ell}+\vec{d} }\varphi \rangle |^2\\
&=&
\sum_{\vec{d} \in \Lambda_B} \sum_{\vec{\ell} \in \Z^2\backslash \{\vec{0}\}}
 |\langle  f,D_A^{-J}T_{(2B+1)\vec{\ell}+\vec{d} }\varphi \rangle |^2+
\sum_{\vec{d} \in \Lambda_B}
 |\langle  f,D_A^{-J}T_{\vec{d} }\varphi \rangle |^2.
\end{eqnarray*}
For each $\vec{d}\in\Lambda_B,$ $\{E_B+(2B+1)\vec{\ell}+\vec{d},\vec{\ell}\in\Z^2\}$
is a partition of $\Z^2.$
It is clear that $E_0 \subset E_B+\vec{d}$ and
$(E_B+(2B+1)\vec{\ell}+\vec{d})\cap E_0 = \emptyset, \forall \vec{\ell} \in \Z^2\backslash \{\vec{0}\}$.
The support of the function $D_A^{-J}T_{\vec{\ell} }\varphi $ is contained in
$A^J (E_B+\vec{\ell}).$ We have
\begin{eqnarray*}
&&\sum_{\vec{\ell} \in \Z^2\backslash \{\vec{0}\}}
 |\langle  f,D_A^{-J}T_{(2B+1)\vec{\ell}+\vec{d} }\varphi \rangle |^2 \\
 =&&
 \sum_{\vec{\ell} \in \Z^2\backslash \{\vec{0}\}}
\left| \int_{\R^2}
\chi_{A^J(E_B+(2B+1)\vec{\ell}+\vec{d}) } \cdot f \cdot \overline{D_A^{-J}T_{(2B+1)\vec{\ell}+\vec{d} }\varphi} d \mu \right|^2\\
 \leq&&
 \sum_{\vec{\ell} \in \Z^2\backslash \{\vec{0}\}}
\int_{A^J(E_B+(2B+1)\vec{\ell}+\vec{d}) } \left| f \right|^2 d \mu \cdot
\|D_A^{-J}T_{(2B+1)\vec{\ell}+\vec{d} }\varphi\|^2\\
\leq&&
\int_{\R^2\backslash A^J E_0}  \left| f \right|^2 d \mu \cdot
\|\varphi\|^2.
\end{eqnarray*}
Since $A$ is expansive, $\lim _{J\rightarrow+\infty}A^J E_0 =\R^2,$ $\lim_{J\rightarrow+\infty} \int_{\R^2\backslash A^J E_0}  \left| f \right|^2 d \mu =0.$
So
\begin{equation*}
\lim_{J\rightarrow+\infty} \sum_{\vec{d} \in \Lambda_B} \sum_{\vec{\ell} \in \Z^2\backslash \{\vec{0}\}}
 |\langle  f,D_A^{-J}T_{(2B+1)\vec{\ell}+\vec{d} }\varphi \rangle |^2
\leq
(2B+1)^2 \|\varphi\|^2\lim_{J\rightarrow+\infty}\int_{\R^2\backslash A^J E_0}  \left| f \right|^2 d \mu=0.
\end{equation*}

To complete the proof of this Proposition, we need to show that
\begin{equation}
\lim_{J\rightarrow+\infty} \sum_{\vec{d} \in \Lambda_B}
 |\langle  f,D_A^{-J}T_{\vec{d} }\varphi \rangle |^2 =0.
\end{equation}
Let $f_N\equiv \chi_{[-N,N]^2} \cdot f.$ Let $\varepsilon>0,$ and choose $N\in\N$ be large
such that $\|f-f_N\|\leq \frac{\varepsilon}{2\|\varphi\|}.$ Then we have
$|\langle  f,D_A^{-J}T_{\vec{d} }\varphi \rangle|\leq |\langle  f_N,D_A^{-J}T_{\vec{d} }\varphi \rangle|+\frac{\varepsilon}{2}.$ Since
\begin{eqnarray*}
|\langle  f_N,D_A^{-J}T_{\vec{d} }\varphi \rangle | &=& |\langle  D_A^{J}f_N,T_{\vec{d} }\varphi \rangle | \\
&=& |\langle  \chi_{A^{-J} [-N,N]^2} D_A^{J}f_N,T_{\vec{d} }\varphi \rangle | \\
&=& |\langle  D_A^{J}f_N,\chi_{A^{-J} [-N,N]^2} T_{\vec{d} }\varphi \rangle |,
\end{eqnarray*}
we have
\begin{eqnarray*}
|\langle  f_N,D_A^{-J}T_{\vec{d} }\varphi \rangle | \rangle |^2
&\leq&
\|D_A^{J}f_N\| \cdot \sqrt{\int_{\R^2}\left|\chi_{A^{-J} [-N,N]^2} T_{\vec{d} }\varphi\right|^2d\mu}\\
&\leq&
\|f\| \cdot \sqrt{\int_{\R^2}\left|\chi_{A^{-J} [-N,N]^2} T_{\vec{d} }\varphi\right|^2d\mu}\cdot \|T_{\vec{d}} \varphi\|\\
&=& \frac{(2N+1)\|f\| \|\varphi\|}{2^{\frac{J}{2}}}.
\end{eqnarray*}
When $J> 2\log_2 \frac{2(2N+1)\|f\| \|\varphi \|}{\varepsilon},$
we have
$\frac{(2N+1)\|f\| \|\varphi\|}{2^{\frac{J}{2}}} < \frac{\varepsilon}{2}$ and
$|\langle  f,D_A^{-J}T_{\vec{d} }\varphi \rangle |<\varepsilon.$
So $\lim_{J\rightarrow+\infty}|\langle  f,D_A^{-J}T_{\vec{d} }\varphi \rangle |^2 =0$ for each $\vec{d}\in \Lambda_B.$ Since $\Lambda_B$ is a finite set, we have
\begin{equation*}
    \lim_{J\rightarrow+\infty} \sum_{\vec{d} \in \Lambda_B}
 |\langle  f,D_A^{-J}T_{\vec{d} }\varphi \rangle |^2 =0.
\end{equation*}

\end{proof}

\begin{lemma}\label{identitylj}
Let $f\in L^2(\R^2)$ and $J\in\Z.$ Then
\begin{equation*}
L_J(f) = (2\pi)^2
\int_{\R^2} \sum_{\vec{\ell}\in\Z^2}  \Big( \widehat{f}(\vt)\overline{\widehat{f}(\vt-2\pi (A^\tau)^J\vec{\ell})}  \widehat{\varphi}((A^\tau)^{-J}\vt-2\pi \vec{\ell}) \overline{\widehat{\varphi}((A^\tau)^{-J}\vt)}\Big) d\vt
\end{equation*}
\end{lemma}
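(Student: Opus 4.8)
The plan is to transfer everything to the frequency side via Plancherel's theorem and then recognize $L_J(f)$ as the squared $\ell^2$-norm of a family of Fourier coefficients. First I would use that $\F$ is unitary together with Lemma \ref{k} and Remark \ref{vv} to compute, for each $\vec{\ell}\in\Z^2$,
\[
\widehat{D_A^J T_{\vec{\ell}}\varphi}(\vs)=\tfrac{1}{\sqrt{2^J}}\,e^{\,i\,((A^\tau)^{-J}\vs)\circ\vec{\ell}}\,\widehat{\varphi}((A^\tau)^{-J}\vs),
\]
so that $\langle f, D_A^J T_{\vec{\ell}}\varphi\rangle=\langle\widehat{f},\widehat{D_A^J T_{\vec{\ell}}\varphi}\rangle=\tfrac{1}{\sqrt{2^J}}\int_{\R^2}\widehat{f}(\vs)\,e^{-i((A^\tau)^{-J}\vs)\circ\vec{\ell}}\,\overline{\widehat{\varphi}((A^\tau)^{-J}\vs)}\,d\vs$. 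Substituting $\vx=(A^\tau)^{-J}\vs$ (whose Jacobian is $2^J$) converts this to $\langle f, D_A^J T_{\vec{\ell}}\varphi\rangle=\sqrt{2^J}\int_{\R^2}F(\vx)\,e^{-i\vx\circ\vec{\ell}}\,d\vx$, where $F(\vx)\equiv\widehat{f}((A^\tau)^J\vx)\,\overline{\widehat{\varphi}(\vx)}$. Since $\widehat{f}$ and $\widehat{\varphi}$ both lie in $L^2(\R^2)$, the product $F$ lies in $L^1(\R^2)$ by Cauchy--Schwarz, so its $2\pi$-periodization $\Phi(\vx)\equiv\sum_{\vec{m}\in\Z^2}F(\vx+2\pi\vec{m})$ is a well-defined $L^1$ function on $\Gamma_\pi$, and folding the integral over $\R^2$ onto $\Gamma_\pi$ (using $e^{-i(2\pi\vec{m})\circ\vec{\ell}}=1$) shows that $\langle f, D_A^J T_{\vec{\ell}}\varphi\rangle$ equals $\sqrt{2^J}\cdot 4\pi^2$ times the $\vec{\ell}$-th Fourier coefficient of $\Phi$.

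Next I would invoke Parseval for the Fourier series on $\Gamma_\pi$. Because Lemma \ref{conv} guarantees $L_J(f)<\infty$, the Fourier coefficients of $\Phi$ are square-summable; an $L^1$ function whose Fourier coefficients are in $\ell^2$ already lies in $L^2$, so Parseval applies and gives
\[
L_J(f)=2^J\cdot 4\pi^2\int_{\Gamma_\pi}|\Phi(\vx)|^2\,d\vx.
\]
It then remains to unfold $|\Phi|^2$: expanding the square produces a double sum over $\vec{m},\vec{m}'$, and substituting $\vx\mapsto\vx+2\pi\vec{m}$ in each term while setting $\vec{k}=\vec{m}-\vec{m}'$ collapses the sum over $\vec{m}$ together with the integral over $\Gamma_\pi$ into an integral over all of $\R^2$, yielding
\[
\int_{\Gamma_\pi}|\Phi(\vx)|^2\,d\vx=\sum_{\vec{k}\in\Z^2}\int_{\R^2}F(\vx)\,\overline{F(\vx-2\pi\vec{k})}\,d\vx.
\]

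Finally I would reinsert the definition of $F$ and change variables back by $\vt=(A^\tau)^J\vx$ (Jacobian $2^J$). Using $(A^\tau)^J(\vx-2\pi\vec{k})=\vt-2\pi(A^\tau)^J\vec{k}$, each summand becomes
\[
2^{-J}\int_{\R^2}\widehat{f}(\vt)\,\overline{\widehat{f}(\vt-2\pi(A^\tau)^J\vec{k})}\,\widehat{\varphi}((A^\tau)^{-J}\vt-2\pi\vec{k})\,\overline{\widehat{\varphi}((A^\tau)^{-J}\vt)}\,d\vt,
\]
and the surviving constant $2^J\cdot 4\pi^2\cdot 2^{-J}=(2\pi)^2$ produces exactly the asserted identity after renaming $\vec{k}$ as $\vec{\ell}$. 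I expect the main obstacle to be the analytic justification of the periodization-and-Parseval step---confirming $F\in L^1$, that the folded integral genuinely realizes the Fourier coefficients of $\Phi\in L^2$, and that the interchanges of summation and integration are legitimate (absolute convergence coming ultimately from the compact support of $\varphi$, together with Tonelli's theorem)---rather than the algebra, which is a careful but routine tracking of the $2^J$ and $2\pi$ factors through the two reciprocal changes of variable.
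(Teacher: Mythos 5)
Your proposal is correct and follows essentially the same route as the paper's proof: Plancherel plus the commutation relation $D_A^JT_{\vec{\ell}}=T_{A^{-J}\vec{\ell}}D_A^J$, the change of variables $\vx=(A^\tau)^{-J}\vs$, folding onto $\Gamma_\pi$ to identify $L_J(f)$ (up to the constant $2^J(2\pi)^2$) with the squared $L^2(\Gamma_\pi)$-norm of the periodization, Parseval justified by the finiteness from Lemma \ref{conv}, and then unfolding the expanded square back to $\R^2$. If anything, your write-up is slightly more careful than the paper's, since you explicitly name the fact that an $L^1$ periodization with square-summable Fourier coefficients lies in $L^2$, and you flag the Tonelli justification (resting on the compact support of $\varphi$) for the final interchange of sum and integral, both of which the paper uses tacitly.
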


\begin{proof}

By Remarks \ref{vv} after Lemma \ref{properties},
we have $D_A ^J T_{\vec{\ell}}=T_{A^{-J}\vec{\ell}} D_A^{J}.$
Note facts that the Fourier transform $\F$ is a unitary operator, $\overline{\widehat{D}_A^{J}\widehat{\varphi}(\vt)} =
\frac{1}{\sqrt{2^J}}\overline{\widehat{\varphi}((A^\tau)^{-J}\vt)}$ and
$(A^{-J}\vec{\ell})\circ\vt = \vec{\ell}\circ ((A^\tau)^{-J}\vt)$. We have
\begin{eqnarray*}
L_J(f)
&=&\sum_{\vec{\ell} \in \Z^2}  |\langle f,T_{A^{-J}\vec{\ell}} D_A^{J}\varphi \rangle |^2\\
&=&\sum_{\vec{\ell} \in \Z^2} | \langle \widehat{f}, \widehat{T}_{A^{-J}\vec{\ell}} \widehat{D}_A^{J}\widehat{\varphi} \rangle  |^2\\
&=&\sum_{\vec{\ell} \in \Z^2} | \int_{\R^2} \widehat{f}(\vt) \cdot e^{i (A^{-J}\vec{\ell})\circ \vt}
\cdot\overline{\widehat{D}_A^{J}\widehat{\varphi}(\vt)} d\vt \ |^2 \\
&=&\sum_{\vec{\ell} \in \Z^2} | \frac{1}{\sqrt{2^J}}\int_{\R^2} e^{i\vec{\ell}\circ ((A^\tau)^{-J}\vt)}
\cdot\widehat{f}(\vt) \cdot \overline{\widehat{\varphi}((A^\tau)^{-J}\vt)} d\vt \ |^2.
\end{eqnarray*}
Take transform $d \vt \equiv d (A^\tau)^{J} \vs = 2^J d\vs.$ Note that $(A^\tau)^{J}\R^2=\R^2.$
We have
\begin{eqnarray*}
L_J(f)
&=& 2^J \sum_{\vec{\ell} \in \Z^2} \big| \int_{\R^2} e^{i\vec{\ell}\circ \vs}
\cdot\widehat{f}((A^\tau)^{J}\vs) \cdot \overline{\widehat{\varphi}(\vs) } d\vs \ \big|^2.
\end{eqnarray*}
Note facts that the function $e^{i\vec{\ell}\circ \vs}$ is $2\pi$-periodic in $\vs$ and
that the set
$\{\Gamma_\pi +2\pi\vec{k},\vec{k}\in\Z^2\}$ is a partition of $\R^2$ where $\Gamma_\pi$ is $[-\pi,\pi)^2.$ We have
\begin{eqnarray*}
L_J(f)
&=&2^{J}\sum_{\vec{\ell} \in \Z^2} \big|
\sum_{\vec{k} \in \Z^2}
\int_{\Gamma_\pi +2\pi\vec{k}} e^{i\vec{\ell}\circ \vs}\cdot\widehat{f}((A^\tau)^{J}\vs) \cdot \overline{\widehat{\varphi}(\vs)} d\vs \ \big|^2\\
&=& 2^{J}\sum_{\vec{\ell} \in \Z^2} \big|\sum_{\vec{k} \in \Z^2}
\int_{\Gamma_\pi} e^{i\vec{\ell}\circ \vr}
\cdot\big( \widehat{f}((A^\tau)^{J}\vr-2\pi(A^\tau)^{J}\vec{k}) \cdot \overline{\widehat{\varphi}(\vr-2\pi\vec{k})} \big) d\vr \ \big|^2\\
&=& 2^{J}\sum_{\vec{\ell} \in \Z^2} \big|
\int_{\Gamma_\pi} e^{i\vec{\ell}\circ \vr}
\cdot\sum_{\vec{k} \in \Z^2}\big( \widehat{f}((A^\tau)^{J}\vr-2\pi(A^\tau)^{J}\vec{k}) \cdot \overline{\widehat{\varphi}(\vr-2\pi\vec{k})} \big) d\vr \ \big|^2
\end{eqnarray*}
where we use the transform $\vr = \vs+2\pi \vec{k}$ accordingly.

The set of functions $\{\frac{1}{2\pi}e^{i\vec{\ell}\circ \vt},\vec{\ell}\in\Z^2\}$ is an orthonormal basis for the Hilbert space $\mathcal{K} = L^2(\Gamma_\pi),$ the set of all
square integrable $2\pi$-periodical functions on $\R^2.$ Denote
\begin{equation*}
    h(\vt) \equiv \sum_{\vec{k} \in \Z^2}\big( \widehat{f}((A^\tau)^{J}\vt-2\pi(A^\tau)^{J}\vec{k}) \cdot \overline{\widehat{\varphi}(\vt-2\pi\vec{k})} \big).
\end{equation*}
Then by above calculation and Lemma \ref{conv}
\begin{equation*}
\sum_{\vec{\ell} \in \Z^2} \big|
\int_{\Gamma_\pi}h(\vt)
\cdot \frac{1}{2\pi}e^{i\vec{\ell}\circ \vt}d\vt \ \big|^2
=
\frac{1}{2^{J}\cdot (2\pi)^2} \cdot L_J(f)
< \infty.
\end{equation*}
This implies that $h\in \mathcal{K}=L^2(\Gamma_\pi)$ and

\begin{eqnarray*}
  \left\| h \right\|^2 _{\mathcal{K}} &=&
  \sum_{\vec{\ell} \in \Z^2} \big|
\int_{\Gamma_\pi}h(\vt)
\cdot \frac{1}{2\pi}e^{i\vec{\ell}\circ \vt}d\vt \ \big|^2\\
\end{eqnarray*}
where $\|\cdot\|_{\mathcal{K}}$ is the norm in $\mathcal{K}.$
Therefore
\begin{eqnarray*}
L_J(f)
&=&  2^{J}\cdot (2\pi)^2 \cdot \left\| h \right\|^2 _{\mathcal{K}}\\
&=&  2^{J}\cdot (2\pi)^2 \int_{\Gamma_\pi}
\big|\sum_{\vec{k} \in \Z^2}\big( \widehat{f}((A^\tau)^{J}\vt-2\pi(A^\tau)^{J}\vec{k}) \cdot \overline{\widehat{\varphi}(\vt-2\pi\vec{k})} \big)\big|^2 d \vt\\
&=& (2\pi)^2
\int_{(A^\tau)^J\Gamma_\pi} \big|\sum_{\vec{k} \in \Z^2}\widehat{f}(\vs-2\pi(A^\tau)^{J}\vec{k}) \cdot \overline{\widehat{\varphi}((A^\tau)^{-J}\vs-2\pi\vec{k})} \big|^2 \ d\vs.
\end{eqnarray*}
Here we use a transform $\vt \equiv (A^\tau) ^{-J} \vs,\ d\vt =2^{-J} d\vs.$
So we have
\begin{eqnarray*}
L_J(f)
&=& (2\pi)^2
\int_{(A^\tau)^J\Gamma_\pi}\sum_{\vec{k} \in \Z^2}\sum_{\vec{\ell} \in \Z^2}\Big(\widehat{f}(\vs-2\pi(A^\tau)^{J}\vec{k}) \cdot
 \overline{\widehat{f}(\vs-2\pi(A^\tau)^{J}\vec{\ell})} \cdot
\\
&&
\widehat{\varphi}((A^\tau)^{-J}\vs-2\pi\vec{\ell})\cdot
\overline{\widehat{\varphi}((A^\tau)^{-J}\vs-2\pi\vec{k})}
\Big) d\vs.
\end{eqnarray*}
In the second sum, replace $\vec{\ell}$ by $\vec{\ell}+\vec{k},$ we have
\begin{eqnarray*}
L_J(f)
&=& (2\pi)^2
\sum_{\vec{k} \in \Z^2}\int_{(A^\tau)^J\Gamma_\pi}\sum_{\vec{\ell} \in \Z^2}\Big(\widehat{f}(\vs-2\pi(A^\tau)^{J}\vec{k}) \cdot
\overline{\widehat{f}(\vs-2\pi(A^\tau)^{J}\vec{k}-2\pi(A^\tau)^{J}\vec{\ell})} \cdot\\
&&
\widehat{\varphi}((A^\tau)^{-J}\vs-2\pi\vec{k}-2\pi\vec{\ell})\cdot
\overline{\widehat{\varphi}((A^\tau)^{-J}\vs-2\pi\vec{k})}
\Big) d\vs.
\end{eqnarray*}
Replacing $\vs$ by $\vs+2\pi(A^\tau)^{J}\vec{k},$ we have
\begin{eqnarray*}
L_J(f)
&=& (2\pi)^2
\sum_{\vec{k} \in \Z^2}\int_{(A^\tau)^J\Gamma_\pi+2\pi (A^\tau)^J \vec{k} } \sum_{\vec{\ell} \in \Z^2}\Big(\widehat{f}(\vs) \cdot
\overline{\widehat{f}(\vs-2\pi(A^\tau)^{J}\vec{\ell})} \cdot\\
&&
\widehat{\varphi}((A^\tau)^{-J}\vs-2\pi\vec{\ell})\cdot
\overline{\widehat{\varphi}((A^\tau)^{-J}\vs)}
\Big) d\vs.
\end{eqnarray*}
Since $\{(A^\tau)^J\Gamma_\pi+2\pi (A^\tau)^J \vec{k}, \vec{k}\in\Z^2 \}$
is a partition of $\R^2,$ we have
\begin{eqnarray*}
L_J(f)
&=& (2\pi)^2
\int_{\R^2}\sum_{\vec{\ell} \in \Z^2}\Big(\widehat{f}(\vs) \cdot
\overline{\widehat{f}(\vs-2\pi(A^\tau)^{J}\vec{\ell}) }\cdot
\widehat{\varphi}((A^\tau)^{-J}\vs-2\pi\vec{\ell})\cdot
\overline{\widehat{\varphi}((A^\tau)^{-J}\vs)}
\Big) d\vs.
\end{eqnarray*}
The Lemma is proved.

\end{proof}

\begin{proposition}\label{isnorm} We have
\begin{equation}
\lim _{J\rightarrow + \infty} L_J(f) = \|f\|^2, \forall f\in L^2(\R^2).
\end{equation}
\end{proposition}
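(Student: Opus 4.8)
The plan is to combine the closed form for $L_J(f)$ from Lemma~\ref{identitylj} with a density reduction. First I would reduce to the case where $\widehat{f}$ has compact support: writing $f=f_\rho+f_{\overline{\rho}}$ and using the $\ell^2$ triangle inequality for the coefficient sequence $\{\langle\,\cdot\,,D_A^JT_{\vec{\ell}}\varphi\rangle\}_{\vec{\ell}}$ gives $|\sqrt{L_J(f)}-\sqrt{L_J(f_\rho)}|\le \sqrt{L_J(f_{\overline{\rho}})}$, and the uniform bound \eqref{x} of Lemma~\ref{conv} controls the right-hand side by $\sqrt{(2B+1)^2\|\varphi\|^2}\,\|f_{\overline{\rho}}\|$. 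Since $\|f_{\overline{\rho}}\|\to 0$ as $\rho\to\infty$, once I know $L_J(f_\rho)\to\|f_\rho\|^2$ for each fixed $\rho$, passing to $\limsup_J$ and $\liminf_J$ and then letting $\rho\to\infty$ squeezes both to $\|f\|$, whence $\lim_J L_J(f)=\|f\|^2$.

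So it remains to treat $f$ with $\widehat{f}$ supported in a ball $\{|\vt|\le\rho\}$. In the identity of Lemma~\ref{identitylj} the summand indexed by $\vec{\ell}$ carries the factor $\widehat{f}(\vt)\,\overline{\widehat{f}(\vt-2\pi(A^\tau)^J\vec{\ell})}$, which vanishes unless both $\vt$ and $\vt-2\pi(A^\tau)^J\vec{\ell}$ lie in the support, forcing $|(A^\tau)^J\vec{\ell}|\le \rho/\pi$. Since $A$, hence $A^\tau$, is expansive, $\|(A^\tau)^{-J}\|\to 0$, and from $|\vec{\ell}|\le\|(A^\tau)^{-J}\|\,|(A^\tau)^J\vec{\ell}|$ one gets $|(A^\tau)^J\vec{\ell}|\ge|\vec{\ell}|/\|(A^\tau)^{-J}\|\to\infty$ uniformly over $\vec{\ell}\neq\vec{0}$. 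Thus for all $J$ large enough (depending on $\rho$) every off-diagonal term vanishes identically, and the identity collapses to the single diagonal term $L_J(f)=(2\pi)^2\int_{\R^2}|\widehat{f}(\vt)|^2\,|\widehat{\varphi}((A^\tau)^{-J}\vt)|^2\,d\vt$.

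For this diagonal term I would use that $\widehat{\varphi}=g$ is entire (Proposition~\ref{entire}), hence continuous, and bounded on $\R^2$ (Remark~\ref{gbd}), with $g(\vec{0})=\tfrac{1}{2\pi}\prod_{j}m_0(\vec{0})=\tfrac{1}{2\pi}$ because $m_0(\vec{0})=\tfrac{1}{\sqrt{2}}\sum_{\vec{n}}h_{\vec{n}}=1$ by the second Lawton equation. As $(A^\tau)^{-J}\vt\to\vec{0}$ on the compact support of $\widehat{f}$, continuity gives $|\widehat{\varphi}((A^\tau)^{-J}\vt)|^2\to\tfrac{1}{(2\pi)^2}$ there, and since $|g|$ is bounded the integrand is dominated by $(2\pi)^2\|g\|_\infty^2\,|\widehat{f}(\vt)|^2\in L^1(\R^2)$; dominated convergence then yields $L_J(f)\to(2\pi)^2\int_{\R^2}|\widehat{f}|^2\cdot\tfrac{1}{(2\pi)^2}\,d\vt=\|\widehat{f}\|^2=\|f\|^2$, which is exactly what launches the density argument of the first paragraph.

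The technical heart is the off-diagonal cancellation, where expansiveness is used in an essential way: once the support-separation estimate $|(A^\tau)^J\vec{\ell}|\to\infty$ is in hand, the diagonal analysis is routine. The only delicate bookkeeping is the interchange of the limits in $J$ and in $\rho$ in the density step, but this is handled cleanly by the uniform-in-$J$ bound \eqref{x}; indeed, \eqref{y} of Lemma~\ref{conv} already encodes precisely the estimate $\lim_{\rho\to\infty}\limsup_{J\to\infty} L_J(f_{\overline{\rho}})=0$ that the squeeze requires.
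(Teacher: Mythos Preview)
Your proposal is correct and follows essentially the same route as the paper: both use Lemma~\ref{identitylj} to isolate the diagonal term, invoke expansiveness of $A^\tau$ to kill the off-diagonal terms when $\widehat{f}$ has compact support, apply dominated convergence (via boundedness and continuity of $g$ with $g(\vec{0})=\tfrac{1}{2\pi}$) for the diagonal, and then pass to general $f$ using the uniform bound \eqref{x} of Lemma~\ref{conv}. The only cosmetic difference is in the density step: the paper expands $L_J(f)=L_J(f_\rho)+L_J(f_{\overline{\rho}})+D_\rho$ and bounds the cross term $D_\rho$ by Cauchy--Schwarz, whereas you use the equivalent $\ell^2$ triangle inequality $|\sqrt{L_J(f)}-\sqrt{L_J(f_\rho)}|\le\sqrt{L_J(f_{\overline{\rho}})}$, which is a slightly tidier packaging of the same estimate.
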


\begin{proof}
We denote
\begin{eqnarray*}
  U_J (f) &\equiv&
  (2\pi)^2
\int_{\R^2}\widehat{f}(\vs) \cdot
\overline{\widehat{f}(\vs)} \cdot
\widehat{\varphi}((A^\tau)^{-J}\vs)\cdot
\overline{\widehat{\varphi}((A^\tau)^{-J}\vs)}d\vs\\
&=&\int_{\R^2} |\widehat{f}(\vs)|^2 |2\pi\widehat{\varphi}((A^\tau)^{-J}\vs) |^2 d\vs,\\
  V_J (f) &\equiv&
  (2\pi)^2
\int_{\R^2}\sum_{\vec{\ell} \in \Z^2\backslash\{\vec{0}\}}\Big(\widehat{f}(\vs) \cdot
\overline{\widehat{f}(\vs-2\pi(A^\tau)^{J}\vec{\ell})} \cdot
\widehat{\varphi}((A^\tau)^{-J}\vs-2\pi\vec{\ell})\cdot
\overline{\widehat{\varphi}((A^\tau)^{-J}\vs)}
\Big) d\vs.
\end{eqnarray*}
By Lemma \ref{identitylj}, we have
$L_J(f) = U_J (f) + V_J (f).$ It is enough to prove that
\begin{eqnarray}
\label{r}  \lim_{J\rightarrow +\infty} U_J (f) &=& \| f \| ^2 \text{ and}\\
\label{s}  \lim_{J\rightarrow +\infty} V_J (f) &=& 0.
\end{eqnarray}

1. Recall that $A^\tau$ is expansive, so $\lim_{J\rightarrow +\infty}(A^\tau)^{-J}\vs=\vec{0}, \forall \vs\in\R^2.$
Also, by definition of $g$, Remark \ref{less1} and Lemma \ref{entire}, $2\pi \widehat{\varphi} (\vec{0}) = 2\pi g(\vec{0})=1,$ $g$ is continuous and bounded on $\R^2$.  By Lebesgue Dominate Convergence Theorem we have
\begin{eqnarray*}
  \lim_{J\rightarrow +\infty} U_J (f) &=&
  \lim_{J\rightarrow +\infty}
  \int_{\R^2}|\widehat{f}(\vs)|^2 \cdot
|2\pi \widehat{\varphi}((A^\tau)^{-J}\vs)|^2
d\vs\\
&=&   \lim_{J\rightarrow +\infty}
  \int_{\R^2}|\widehat{f}(\vs)|^2 \cdot
|2\pi g((A^\tau)^{-J}\vs)|^2
d\vs\\
&=&  \|\widehat{f}\|^2
=  \|f\|^2.
\end{eqnarray*}
This proves \eqref{r}.

2. Let $\rho\in\R^+,$ $\Delta_\rho$ be the open ball with center $\vec{0}$ and radius $\rho.$ In particular,
$\Delta_1$ is the open ball with center $\vec{0}$ and radius $1.$
Let $\chi_\rho$ and  $\chi_{\overline{\rho}}$ be the characteristic functions of sets $\Delta_\rho$ and $\R^2\backslash \Delta_\rho$ respectively. Define $f_\rho$ by
$ \widehat{f_\rho} \equiv \chi_\rho \widehat{f}$ and  define $f_{\overline{\rho}}$ by
$ \widehat{f_{\overline{\rho}}} \equiv \chi_{\overline{\rho}} \widehat{f}.$
Since the Fourier transform is linear, we have $f=f_\rho+f_{\overline{\rho}}.$ Also, it is clear that $\|f\|^2 = \|\widehat{f}\|^2=\|f_\rho\|^2 +\|f_{\overline{\rho}}\|^2,$
 $\lim_{\rho\rightarrow+\infty}\|f_\rho\|^2=\|f\|^2$ and
$\lim_{\rho\rightarrow+\infty}\|f_{\overline{\rho}}\|^2=0.$

Since $A^\tau$ is expansive, $\beta\equiv \|(A^\tau)^{-1}\|^{-1}>1.$ Denote $a\equiv \log_\beta(2\rho).$ Let  $J_{\rho}$ be the smallest natural number in the interval
$(a,+\infty).$
 When $J\geq J_\rho,$  $(A^\tau)^J \Delta_1$ contains an open ball $\Delta_{2\rho}.$
 Since $\Delta_1\cap \Z^2 =\{\vec{0}\},$
$2\pi (A^\tau)^J\Delta_1\cap 2\pi (A^\tau)^J\Z^2 =\{\vec{0}\}.$ Also we have $\Delta_{2\rho} \subseteq (A^\tau)^J\Delta_1\subseteq 2\pi (A^\tau)^J\Delta_1.$
These facts implies that when $J\geq J_\rho$ the distance between $\vec{0}$ and $2\pi (A^\tau)^J \vec{\ell}$ is greater than $2\rho.$
So for each $\vec{\ell}\in\Z^2\backslash\{\vec{0}\}$, the support of $\widehat{f_\rho}(\vt)$ which is $\Delta_\rho$ and the  support of $\widehat{f_\rho}(\vt-2\pi (A^\tau)^J\vec{\ell})$
which is $\Delta_\rho+2\pi (A^\tau)^J\vec{\ell}$ are disjoint.
This implies that the product
$\widehat{f_\rho}(\vt)\overline{\widehat{f_\rho}(\vt-2\pi (A^\tau)^J\vec{\ell})}\equiv 0$
when $J\geq J_\rho.$ Therefore, we have
\begin{equation*}
    \lim_{J\rightarrow+\infty} V_J (f_\rho) = 0, \forall \rho\in\R^+.
\end{equation*}
Together with \eqref{r}, we have proved that
\begin{equation}\label{i}
\lim _{J\rightarrow + \infty} L_J(f_\rho) = \|f_\rho\|^2,
\forall f\in L^2(\R^2),\forall\rho\in\R^+.
\end{equation}

3. Let $D_\rho \equiv \sum_{\vec{\ell}\in\Z^2} \big(\langle f_\rho,D_A ^J T_{\vec{\ell}}\varphi \rangle
  \overline{\langle f_{\overline{\rho}} ,D_A ^J T_{\vec{\ell}}\varphi \rangle}+
  \langle f_{\overline{\rho}} ,D_A ^J T_{\vec{\ell}}\varphi \rangle
  \overline{\langle f_\rho ,D_A ^J T_{\vec{\ell}}\varphi \rangle}\big).$
  Then
  \begin{eqnarray*}
   | D_\rho | &\leq& 2\sum_{\vec{\ell}\in\Z^2} | \langle f_\rho,D_A ^J T_{\vec{\ell}}\varphi \rangle| \cdot
  |\langle f_{\overline{\rho}} ,D_A ^J T_{\vec{\ell}}\varphi \rangle| \\
  &\leq&
  2\sqrt{\sum_{\vec{\ell}\in\Z^2} | \langle f_\rho,D_A ^J T_{\vec{\ell}}\varphi \rangle|^2} \cdot
  \sqrt{\sum_{\vec{\ell}\in\Z^2}|\langle f_{\overline{\rho}} ,D_A ^J T_{\vec{\ell}}\varphi \rangle|^2}\\
  &=&
  2 \sqrt{L_J(f_\rho)} \cdot \sqrt{L_J(f_{\overline{\rho}})}.
  \end{eqnarray*}
By Lemma \ref{conv}, we have
\begin{eqnarray*}
 | D_\rho | &\leq&  2 (2B+1)^2 \|\varphi\|^2 \|f_\rho\| \|f_{\overline{\rho}}\|.
\end{eqnarray*}
We have
\begin{eqnarray*}
  L_J(f)-\|f\|^2
  &=& L_J(f_\rho + f_{\overline{\rho}}) -\|f\|^2 \\
  &=& \sum_{\vec{\ell}\in\Z^2} \langle f_\rho + f_{\overline{\rho}} ,D_A ^J T_{\vec{\ell}}\varphi \rangle
  \overline{\langle f_\rho + f_{\overline{\rho}} ,D_A ^J T_{\vec{\ell}}\varphi \rangle}-\|f\|^2\\
  &=& L_J(f_\rho )-\|f\|^2 + L_J(f_{\overline{\rho}})+D_\rho\\
  &=& \big(L_J(f_\rho )-\|f_\rho\|^2\big) -\|f_{\overline{\rho}}\|^2 + L_J(f_{\overline{\rho}})+D_\rho.
\end{eqnarray*}
By \eqref{i}, \eqref{y} and Lemma \eqref{x} we have
  \begin{eqnarray*}
&&\limsup_{J\rightarrow + \infty}\big| L_J(f)-\|f\|^2\big|\leq 0+ \|f_{\overline{\rho}}\|^2 + 0 +
2 (2B+1)^2 \|\varphi\|^2 \|f_\rho\| \|f_{\overline{\rho}}\|, \forall \rho\in\R^+.
\end{eqnarray*}
The left hand side contains no $\rho.$ Let $\rho\rightarrow+\infty,$
we have \eqref{s}
\begin{eqnarray*}
  \lim_{J\rightarrow \infty} L_J (f)  &=& \|f\|^2.
\end{eqnarray*}
\end{proof}

The proof of Theorem \ref{theom_frame} is complete.

\bigskip

\section{Conclusion}

Let $A_0$ be a $2\times 2$ expansive integral matrix with $|\det (A_0)|=2.$ We can construct normalized tight frame wavelets associated with $A_0$ in the following steps.

(1) Find a $2\times 2$ integral matrix $S$ with $|\det (S)|=1$ with the property that
$S A S^{-1} = A_0$ where $A$ is one of the six matrices in list  (\ref{newsix}) (Proposition \ref{linktonew6}).

(2) Solve the system of equations (\ref{lawtoneq})
\begin{equation*}
\left\{\begin{array}{l}
\sum_{\vec{n}\in\Z^2}h_{\vec{n}}\overline{h_{\vec{n}+\vec{k}}}=\delta_{\vec{0} \vec{k}},~ \vec{k}\in A^\tau\Z^2 \\
\sum_{\vec{n}\in\Z^2}h_{\vec{n}}=\sqrt{2}.
\end{array}\right.
\end{equation*}
for a finite solution $\mathcal{S}=\{h_{\vec{n}}:~{\vec{n}\in\Z^2}\}.$ That is, the index set of non-zero terms $h_{\vec{n}}$ is included in
the set $\Lambda_0 \equiv \Z^2 \cap [-N_0,N_0]^2$ for some natural number $N_0.$

(3) Define the filter function $m_0$ by (\ref{m0})
\begin{equation*}
    m_0(\vec{t})=\frac{1}{\sqrt{2}}\sum_{\vec{n}\in\Z^2} h_{\vec{n}} e^{-i\vec{n}\circ\vec{t}}.
 \end{equation*}

(4) Define a function $g$ by (\ref{g}),
\begin{equation*}
 g(\vec{\xi})= \frac{1}{2\pi}\prod_{j=1}^{\infty}m_0((A^{\tau})^{-j} \vec{\xi}), \forall \vec{\xi}\in\R^2
\end{equation*}
The function $g$ is an $L^2(\R^2)$-function (Proposition \ref{l2phi}).

(5) Define the scaling function $\varphi$ by (\ref{phi})
\begin{equation*}
\varphi = \F^{-1} g.
\end{equation*}
The scaling function $\varphi$ is an $L^2(\R^2)$-function with compact support (Proposition \ref{compactsupport}).

(6) Let $\vec{\ell}_A$ be the vector as in Proposition \ref{properties}. Define
\begin{equation*}
    \sigma_A (\vec{n}) = \left\{
    \begin{array}{cccc}
      0 & \vec{n} \in A \Z^2,  \\
      1 & \vec{n} \notin A \Z^2.
    \end{array}
    \right.
\end{equation*}
Define the wavelet function  $\psi_A$ on $\R^2$ by (\ref{defpsi})
\begin{eqnarray*}
  \psi_A &=& \sum_{\vec{n}\in\Z^2}
  (-1)^{\sigma_A (\vec{n})} \overline{h_{\vec{\ell}_A-\vec{n}}} D_AT_{\vec{n}} \varphi .
\end{eqnarray*}
This is a normalized tight frame wavelet with compact support associated with matrix $A$ (Theorem \ref{theom_frame}).

(7) Define the wavelet function $\psi$ by
\begin{equation*}
    \psi(\vec{t}) \equiv \psi_A(S\vec{t}),\forall \vec{t}\in\R^2.
\end{equation*}
The function $\psi$ is a normalized tight frame wavelet with compact support associated with the given matrix $A_0$ (Theorem \ref{redsym}).

\bigskip


\begin{thebibliography}{9}


\bibitem{baggett}
L. Baggett, H. Medina, K. Merrill,
\textit{Generalized multi-resolution analyses and a construction procedure for all wavelet sets in Rn},
Journal of Fourier Analysis and Applications, vol 5, issue 6, 1999, Springer

\bibitem{wangyang2}
J. Lagarias and
Y. Wang,
\textit{Haar Type Orthonormal Wavelet Bases in $\R^2$,}
Journal of Fourier Analysis and Applications
February 1995, Volume 2, Issue 1, pp 1-14

\bibitem{wangyang}
E. Belogay and Y. Wang,
\textit{Arbitrarily Smooth Orthogonal Nonseparable Wavelets in $\R^2$},
SIAM Journal on Mathematical Analysis,  Volume 30,  Issue 3,  1999.




\bibitem{benedetto}
J. Benedetto and S. Li,
\textit{The theory of multiresolution analysis frames and applications to filter banks},
Applied and Computational Harmonic Analysis, 1998

\bibitem{christensen}
O. Christensen,
\textit{An Introduction to Frames and Riesz Bases, Birkh¡§auser}, Boston, 2003.

\bibitem{dl}
X. Dai and D. R. Larson,
\textit{Wandering vectors for unitary systems and orthogonal wavelets},
 Issue 640,  Memoir of AMS, 1998.


\bibitem{dau}
I. Daubechies,
\textit{Ten lectures on wavelets},
CBMS 61,  SIAM, 1992.



\bibitem{guhan}
Q. Gu and D. Han,
\textit{On multiresolution analysis (MRA) wavelets in $R^n$},
Journal of Fourier Analysis and Applications,  Volume 6 Issue 4,  2000.

\bibitem{halmos}
P. Halmos
\textit{Finite-Dimensional Vector Spaces},
Undergraduate Texts in Mathematics,
Springer, 1974.

\bibitem{hormander}
L. H$\ddot{o}$rmander,
\textit{The Analysis of Linear Partial Differential Operators I,  Distribution Theory and Fourier Analysis},  Second Edition,
Springer-Verlag,  1989,  ISBN 3-540-52345-6.

\bibitem{lawton}
W. Lawton,
\textit{Tight frames of compactly supported affine wavelets},
J. Math. Phys. 31,  1898 (1990).





\end{thebibliography}
\end{document}